\documentclass[preprint]{elsarticle}
\usepackage{natbib}
\usepackage{a4wide,graphicx,fancyhdr,amsmath,amssymb,mathtools,yfonts}
\usepackage[all]{xy}
\usepackage[utf8]{inputenc}
\usepackage{amsthm}
\usepackage[english]{babel}
\usepackage{chngcntr}
\usepackage{ifthen}
\usepackage{calc}

\bibliographystyle{plainnat}
\numberwithin{equation}{section}
\newtheorem{theorem}{Theorem}[section]
\newtheorem{lemma}[theorem]{Lemma}

%----------------------- Macros and Definitions --------------------------

\newcommand{\N}{\mathbb{N}}
\newcommand{\Z}{\mathbb{Z}}
\newcommand{\Q}{\mathbb{Q}}

\newcounter{Constant}[chapter]
\newcounter{constant}[chapter]

\newcommand{\Cst}[1]
{
\ifthenelse{\value{#1}=0}
  {\addtocounter{Constant}{1}\setcounter{#1}{\value{Constant}}f_{\arabic{#1}}}
  {f_{\arabic{#1}}}
}

\newcommand{\cst}[1]
{
\ifthenelse{\value{#1}=0}
  {\addtocounter{constant}{1}\setcounter{#1}{\value{constant}}c_{\arabic{#1}}}
  {c_{\arabic{#1}}}
}

\begin{document}

%-------------------------------- Title ----------------------------------

\title{The Catalan Equation}
\author{P.H. Koymans\corref{cor1}}
\ead{p.h.koymans@math.leidenuniv.nl}
\address{Leiden University, Mathematical Institute, P.O. Box 9512, 2300 RA Leiden, The Netherlands}

\cortext[cor1]{Corresponding author}

\begin{abstract}
We consider the Catalan equation $x^p - y^q = 1$ in unknowns $x, y, p, q$, where $x, y$ are taken from an integral domain $A$ of characteristic $0$ that is finitely generated as a $\Z$-algebra and $p, q > 1$ are integers. We give explicit upper bounds for $p$ and $q$ in terms of the defining parameters of $A$. Our main theorem is a more precise version of a result of Brindza \citep{Brindza}. Brindza \citep{B} also gave inexplicit bounds for $p$ and $q$ in the special case that $A$ is the ring of $S$-integers for some number field $K$. As part of the proof of our main theorem, we will give a less technical proof for this special case with explicit upper bounds for $p$ and $q$.
\end{abstract}

\begin{keyword}
Catalan equation \sep Linear forms in logarithms
\MSC 11D61 \sep 11J86
\end{keyword}

\maketitle

%--------------------------------- Text ----------------------------------

%\tableofcontents

\section{Introduction}
In 1844, Catalan conjectured that $8$ and $9$ are the only consecutive positive integers which both are perfect powers. More formally, the only solution in the natural numbers of
\begin{align}
\label{cati}
x^p - y^q = 1
\end{align}
for $p, q > 1$, $x, y > 0$ is $x = 3$, $p = 2$, $y = 2$, $b = 3$. Cassels \citep{Cassels} made the weaker conjecture that (\ref{cati}) has only finitely many solutions in positive integers $p, q > 1$, $x, y > 0$. Latter conjecture was proven by Tijdeman \citep{Tijdeman}. His proof heavily relies on the theory of linear forms in logarithms. A key point of Tijdeman's proof is that it is effective in the sense that an upper bound for the solutions can be computed.

Despite Tijdeman's work, Catalan's conjecture remained unproven until 2002. The problem was that the bounds resulting from Tijdeman's work were exceedingly large. In 2002, Mih\u ailescu \citep{Mih} was able to prove Catalan's conjecture using algebraic methods and avoiding linear forms in logarithms estimates.

Here, we consider Catalan's equation over other integral domains. Together with Brindza and Gy\H ory, Tijdeman was able to generalize his proof to the ring of integers of a number field $K$, see \citep{BGT}. They showed that there exists an effectively computable number $C$ which depends only on $K$ such that all solutions of the equation
\[
x^p \pm y^q = 1 \text{ in } x, y \in \mathcal{O}_K, p, q \in \N
\]
with $x, y$ not roots of unity and $p, q > 1$, $pq > 4$ satisfy
\[
\max(h(x), h(y), p, q) < C,
\]
where $h(\cdot)$ denotes the absolute logarithmic height of an algebraic number.

Brindza \citep{B} further generalized this to the ring of $S$-integers of a number field. However, Brindza's proof is quite technical. In section \ref{Brindza} we will prove Brindza's result by generalizing the proof given for the ordinary ring of integers in \citep{BGT}. Furthermore, we will make the resulting upper bounds for the solutions more explicit leading to Theorem \ref{tCatalana}.

From now on $c_1, c_2, \ldots$ are effectively computable constants depending only on $K$ and $S$. We use the notation $O(\cdot)$ as an abbreviation for $c$ times the expression between the parentheses, where $c$ is an effectively computable absolute constant. At each occurrence of $O(\cdot)$, the value of $c$ may be different. 

\begin{theorem}
\label{tCatalana}
Suppose that $p$ and $q$ are prime. Then all solutions of (\ref{catalan}) satisfy
\begin{align}
\label{pqprimebound}
\max\{p, q\} < (P^2s)^{O(Ps)} |D_K|^{6P} P^{P^2} =: \cst{11}
\end{align}
and
\begin{align}
\label{heightbound}
\max\{h(x), h(y)\} < (\cst{11} s)^{\cst{11}^6} |D_K|^{\cst{11}^4} Q^{\cst{11}^4}.
\end{align}
Furthermore, if $p$ and $q$ are arbitrary natural integers, we have
\begin{align}
\label{pqbound}
\max\{p, q\} < (\cst{11} s)^{\cst{11}^6} |D_K|^{\cst{11}^4} Q^{\cst{11}^4}.
\end{align}
\end{theorem}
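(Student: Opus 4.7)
The plan is to adapt the strategy of Brindza--Gy\H{o}ry--Tijdeman from the ordinary ring of integers to the $S$-integer setting, while carefully quantifying every constant that appears in the argument. The three assertions (\ref{pqprimebound}), (\ref{heightbound}), and (\ref{pqbound}) are attacked in turn, each building on the previous.

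For (\ref{pqprimebound}), assume without loss of generality $p \leq q$ with both prime. I pass to $L = K(\zeta_p)$ and let $S_L$ be the set of places of $L$ lying above $S$ or above $p$. The factorisation $y^q = \prod_{i=0}^{p-1}(x - \zeta_p^i)$ in $O_{L, S_L}$ has factors that are pairwise coprime outside $S_L$, since any common divisor of $x - \zeta_p^i$ and $x - \zeta_p^j$ divides $\zeta_p^i - \zeta_p^j$, whose prime divisors all lie above $p$. Each factor is therefore a $q$-th power up to a correction controlled by the $q$-torsion of the $S_L$-class group and by $O_{L,S_L}^* / (O_{L,S_L}^*)^q$: one writes $x - \zeta_p^i = \alpha_i \gamma_i^q$ with $\alpha_i$ of explicitly bounded height and $\gamma_i \in O_{L,S_L}$. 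Applying Siegel's identity to three such factors produces $S_L$-unit equations of formal degree $q$.

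Dividing two such representations yields a multiplicative identity in which $(\gamma_i/\gamma_j)^q (\alpha_i/\alpha_j)$ is close to a root of unity at some chosen place. I then invoke effective lower bounds for linear forms in logarithms: Matveev or Baker--W\"ustholz at archimedean places, and Yu's theorem at the non-archimedean ones. Keeping track of $[L : \Q] \leq p(p-1)[K : \Q]$, of $|D_L|$ in terms of $|D_K|$ and $p$, of $|S_L|$ in terms of $s$ and $P$, and of the heights of the $\alpha_i$ via explicit $S_L$-regulator bounds, the resulting inequality upper-bounds $q$ in terms of $p$ and the ambient field data; a symmetric argument, exchanging the roles of $x^p$ and $y^q$, bounds $p$. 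Combining yields (\ref{pqprimebound}). The principal obstacle lies precisely here: each auxiliary quantity ($|D_L|$, $|S_L|$, the height of the $\alpha_i$, the degree of $L$, the regulator) must be expressed as an explicit polynomial in $\log|D_K|$, $\log P$, $\log s$ and their powers so that the final exponent structure in $\cst{11}$ emerges cleanly.

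With $p, q \leq \cst{11}$, the equation becomes a superelliptic $S$-integral Diophantine equation of bounded degree. An effective Baker-type theorem for $S$-integral points on such curves, with the dependence on $Q$ entering through the standard $S$-unit lemma, then produces the height bound (\ref{heightbound}). Finally, for (\ref{pqbound}) with arbitrary $p, q$, let $p' \mid p$ and $q' \mid q$ be prime divisors and set $X = x^{p/p'}$, $Y = y^{q/q'}$; then $X^{p'} - Y^{q'} = 1$ falls under the already proved prime case, so $p', q' \leq \cst{11}$ and $h(X), h(Y)$ are bounded by the right-hand side of (\ref{heightbound}). Since $x$ and $y$ are assumed not to be roots of unity, an effective Lehmer-type lower bound for $h(x)$ and $h(y)$ over $O_{K,S}$, combined with $h(X) = (p/p') h(x)$ and the analogous identity for $y$, yields the claimed bound on $p$ and $q$ and hence (\ref{pqbound}).
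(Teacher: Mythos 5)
Your reductions for the second and third assertions are essentially the paper's: once $\max\{p,q\}$ is bounded, (\ref{heightbound}) follows from the effective super/hyperelliptic bounds (Lemmas \ref{lsuper}, \ref{lhyper}), and (\ref{pqbound}) follows by replacing $p,q$ by prime divisors and using the Voutier-type lower bound of Lemma \ref{lVoutier}. The gap is in the core step, the proof of (\ref{pqprimebound}). You pass to $L=K(\zeta_p)$ and factor $y^q=\prod_{i}(x-\zeta_p^i)$, but then every parameter entering Matveev's and Yu's estimates depends on $p$ itself: $[L:\Q]$ can be $(p-1)d$, $|D_L|$ acquires a factor of size $p^{O(dp)}$, the set $S_L$ (and hence the $S_L$-unit rank, i.e.\ the number of logarithms $n$ in the linear form) has cardinality of order $p(d+s)$, and $R_{S_L}$ is controlled only by a power of $|D_L|$. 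Since the constants $a_1(n,d)$, $a_3(n,d)$ are exponential in $n$, the inequality you obtain is at best of the shape $q\leq\exp\bigl(C(K,S)\,p\log p\bigr)$, and the ``symmetric argument'' gives $p\leq\exp\bigl(C(K,S)\,q\log q\bigr)$. Two bounds $q\leq f(p)$ and $p\leq f(q)$ with $f$ increasing never bound $\max\{p,q\}$, so the final sentence ``combining yields (\ref{pqprimebound})'' is a non sequitur; this is exactly the circularity that the Tijdeman/BGT method is designed to avoid, and it also explains why your route cannot produce the specific shape of the right-hand side of (\ref{pqprimebound}), which in the paper actually comes from the Schinzel--Tijdeman reduction (Lemma \ref{lST}) in the regime $q\leq P$.

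The paper stays inside the fixed field $K$: it factors $1-x^p=(1-x)(1+x+\cdots+x^{p-1})$, notes that the gcd of the two factors divides $p$, and uses the class number $h$ and Lemmas \ref{lSunit}, \ref{lhsmall} to write $(x-1)^h=\eta_1^{u_1}\cdots\eta_{s-1}^{u_{s-1}}\theta_0 w^q$ with $0\leq u_i<q$ and $h(\theta_0)\ll\log p$ (and symmetrically $(1-y)^h$ as a unit times a bounded element times a $p$-th power). Linear forms in logarithms (Lemma \ref{lMatveev} at infinite places, Lemma \ref{lYu} at finite places) are then applied to quantities such as $1-(-y)^q/x^p$ and $(1-y)^{qh}/(x-1)^{ph}-1$, which involve a number of logarithms bounded by $s+1$ in the fixed field $K$ and heights that are only $O(\log p)$; this yields the asymmetric pair $p\ll d^{13}sP\log Y\log p$ and $q\ll(\log p)^4$, which does close and gives an absolute bound for $p$. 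Your sketch also omits the degenerate cases in which the relevant $\Lambda$ vanishes, namely $(x-1)^p+(y-1)^q=0$ and $x^{ph}=(1-y)^{qh}$ (parts B, F, H of the paper), which need separate elementary arguments. To repair your proposal you would have to either abandon the cyclotomic extension in favour of the partial factorization over $K$, or show---which is not done and is not plausible with the cited tools---that the linear-forms step over $K(\zeta_p)$ can be made polylogarithmic in $p$.
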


Brindza \citep{Brindza} also gave effective upper bounds for $p$ and $q$ for the Catalan equation over finitely generated domains in the case that $x$ and $y$ are transcendental. In section \ref{Specialization} we will strengthen his result by giving explicit upper bounds for $p$ and $q$ without restrictions on $x$ and $y$. This will be our main theorem, which we state below.

Let $A = \Z[z_1, \ldots, z_r]$ be an integral domain finitely generated over $\Z$ with $r > 0$ and denote by $K$ the quotient field of $A$. We have
\[
A \cong \Z[X_1, \ldots, X_r]/I
\]
where $I$ is the ideal of polynomials $f \in \Z[X_1, \ldots, X_r]$ such that $f(z_1, \ldots, z_r) = 0$. Then $I$ is finitely generated. Let $d \geq 1$, $h \geq 1$ and assume that
\[
I = (f_1, \ldots, f_m)
\]
with $\deg f_i \leq d$, $h(f_i) \leq h$ for $i = 1, \ldots, m$. Here $\deg$ means the total degree of the polynomial $f_i$ and $h(f_i)$ is the logarithmic height of $f_i$., i.e., the logarithm of the maximum of the absolute values of the coefficients of $f_i$.

\begin{theorem}
\label{tMT}
All solutions of the equation
\[
x^p - y^q = 1
\]
in positive integers $p$ and $q$, $x, y \in A$ and $x, y$ not roots of unity must satisfy
\[
\label{resulta}
\max\{p, q\} < (2d)^{\exp O(r)}
\]
if $x, y$ are transcendental and
\[
\label{resultt}
\max\{p, q\} < \exp \exp \exp\left((2d)^{\exp O(r)} (h + 1)\right)
\]
if $x, y$ are algebraic.
\end{theorem}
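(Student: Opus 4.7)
The plan is to reduce Theorem~\ref{tMT} to the number-field result (Theorem~\ref{tCatalana}) by the specialization method of Evertse--Győry, treating the algebraic and transcendental cases separately.

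In the algebraic case $x, y \in \overline{\Q} \cap A$, both elements lie in a finite extension $K'/\Q$, and the equation $x^p - y^q = 1$ holds in $\mathcal{O}_{K', S}$ for the set $S$ consisting of the archimedean places of $K'$ together with those finite places at which $x$ or $y$ fails to be integral. The first task is to bound $P = [K':\Q]$, $|D_{K'}|$, $s = |S|$, and $\max\{h(x), h(y)\}$ in terms of the data $d, h, r$. Using the defining polynomials $f_1, \ldots, f_m$ of $A$ and effective elimination (resultants together with Mahler-type height estimates on coefficients of minimal polynomials), one obtains degree bounds of the shape $(2d)^{\exp O(r)}$ and height bounds of the shape $(2d)^{\exp O(r)}(h+1)$. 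Substituting into \eqref{pqbound} of Theorem~\ref{tCatalana} then yields the triple-exponential estimate claimed in the algebraic case.

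When at least one of $x, y$ is transcendental over $\Q$, I instead specialize: the goal is to construct a ring homomorphism $\sigma : A \to \overline{\Q}$ such that $\sigma(x)$ and $\sigma(y)$ are both nonzero and not roots of unity. The equation $\sigma(x)^p - \sigma(y)^q = 1$ then persists in a suitable $\mathcal{O}_{K', S}$ with $K' = \Q(\sigma(x), \sigma(y))$, and Theorem~\ref{tCatalana} bounds $\max\{p,q\}$ in terms of invariants of $K'$. A good $\sigma$ is produced through a Noether normalization $\Z[t_1, \ldots, t_k] \hookrightarrow A$ (so that $A$ becomes integral over this subring after inverting one element) followed by specialization of the $t_i$ to small integers chosen in the complement of a finite list of \emph{bad} hypersurfaces: namely, the loci where $\sigma(x)$ or $\sigma(y)$ vanishes, becomes undefined, or equals a root of unity of order below the a priori bound $N$ furnished by Theorem~\ref{tCatalana}. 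The crucial feature is that this list depends only on the geometric data $d, r$, not on the coefficient heights $h$, which is precisely why the resulting bound $(2d)^{\exp O(r)}$ is $h$-free.

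The principal obstacle is making this specialization step \emph{quantitative}: one needs a single specialization $\sigma$ simultaneously controlling $[K':\Q]$, $|D_{K'}|$, $|S|$, and the forbidden-values list, with every bound expressed explicitly in $d$ and $r$ alone. The number of cyclotomic conditions to exclude is polynomial in $N$, and the transcendental locus in $\mathrm{Spec}(A)$ is positive-dimensional, so a dimension-count combined with effective Bezout estimates produces such a $\sigma$; extracting fully explicit constants, however, requires careful use of height estimates on ideals in finitely generated $\Z$-algebras, in the style developed by Evertse--Győry for effective results over finitely generated domains. Once this specialization machinery is in place, the remainder of the argument is routine parameter-tracking through the two reductions.
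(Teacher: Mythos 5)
There is a genuine gap, and it sits in both halves of your plan. In the transcendental case you propose to specialize $A\to\overline{\Q}$ and then invoke Theorem \ref{tCatalana}, claiming the resulting bound is $h$-free because the list of bad hypersurfaces to avoid depends only on $d$ and $r$. But the quantitative input to Theorem \ref{tCatalana} is not that list: the bound \eqref{pqbound} is governed by $|D_K|$, $P$, $Q$ and $s$ of the specialized number field and $S$-set, and after any integer specialization these are controlled only by quantities like $\exp\bigl((2d)^{\exp O(r)}(h+1)\bigr)$, since the coefficients $F_i(\mathbf{y})$ of the specialized minimal polynomial and the value $f(\mathbf{y})$ carry the coefficient heights. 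So your route yields at best the triple-exponential $h$-dependent bound, not $(2d)^{\exp O(r)}$, and no choice of specialization point can remove this dependence. The paper obtains the $h$-free bound by a completely different mechanism that is absent from your proposal: if $x\notin k_i$ for some $i$ (which is the correct form of ``transcendental'' here, via Lemma \ref{lIntersection}), one regards $x^p-y^q=1$ as an $S$-unit equation over the function field $M_i/k_i$ and applies Mason's theorem (Theorem \ref{tMason}) together with the genus bound of Lemma \ref{lSchmidt}; there the heights are degrees, so the coefficient size $h$ never enters and one gets $p+q-4\leq (2d)^{\exp O(r)}$ directly.

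The algebraic case as you set it up also does not close. You cannot bound $\max\{h(x),h(y)\}$, nor the set of finite places at which $x$ or $y$ fails to be integral, ``by elimination'' from the presentation of $A$: when $K$ has positive transcendence degree, the algebraic elements of $A$ have unbounded height and there is no direct control of their denominators; the paper explicitly remarks that one cannot even effectively compute $\overline{\Q}\cap K$, and this is precisely why the specialization machinery is deployed \emph{in the algebraic case}. Concretely, one first passes to $B=A_0[u,f^{-1}]\supseteq A$ (Evertse--Gy\H{o}ry), then chooses an integer point $\mathbf{y}$ with $\Delta_F F_D f$ nonvanishing; the homomorphism $\varphi_{\mathbf{y},j}$ maps $B$ into the ring of $S$-integers of $K_{\mathbf{y},j}$ with $S$ consisting of the infinite places and the divisors of $f(\mathbf{y})$, and it sends algebraic elements of $B$ to conjugates, so $x,y$ map to non-roots of unity and Theorem \ref{tCatalana} applies with $|D_{K_{\mathbf{y},j}}|$, $s$, $P$, $Q$ all bounded in terms of $d,r,h$; note that no a priori height bound on $x,y$ is needed (and none is available). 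In short, the specialization argument belongs to the algebraic case and the function-field/Mason argument to the transcendental case; with the methods assigned as in your proposal, neither stated bound follows.
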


In the case that $x$ and $y$ are transcendental, we will use a relatively straightforward function field argument. But in the case that $x$ and $y$ algebraic presents more difficulties. The proof uses a specialization technique similar to that in \cite{EG}. By means of a so called specialization homomorphism we embed our finitely generated domain into an algebraic number field, after which we can apply our theorem \ref{tCatalana}.

\section{Preliminaries}
This section contains some preliminaries about function fields and number fields.

\subsection{Function fields}
Let $k$ be a field. A function field $K$ over $k$ is a finitely generated field extension of transcendence degree $1$ over $k$. From now on we will assume that $k$ is algebraically closed and of characteristic $0$. Denote by $M_K$ the set of normalized discrete valuations on $K$ that are trivial on $k$. These satisfy the so called sum formula
\[
\sum_{v \in M_K} v(x) = 0
\]
for $x \in K^\ast$. Let $\mathbf{x} = (x_1, \ldots, x_n) \in K^n \setminus \{0\}$ be a vector. We define
\[
v(\mathbf{x}) := -\min(v(x_1), \ldots, v(x_n)) \text{ for } v \in M_K
\]
and
\[
H_K^{\text{hom}}(\mathbf{x}) = H_K^{\text{hom}}(x_1, \ldots, x_n) := \sum_{v \in M_K} v(\mathbf{x}).
\]
We call $H_K^{\text{hom}}(\mathbf{x})$ the homogeneous height of $\mathbf{x}$ with respect to $K$. Let $L$ be a finite extension of $K$. Then
\[
H_L^{\text{hom}}(\mathbf{x}) = [L : K] H_K^{\text{hom}}(\mathbf{x}).
\]
Next we define the height for elements of $K$ by
\[
H_K(x) := H_K^{\text{hom}}(1, x) = -\sum_{v \in M_K} \min(0, v(x)).
\]
Now we mention the most important properties of the height $H_K$. It is well known that
\[
H_K(x) \geq 0 \text{ for } x \in K, \ H_K(x) = 0 \Leftrightarrow x \in k.
\]
Furthermore, it follows from the sum formula that
\[
H_K(x^m) = |m|H_K(x) \text{ for } x \in K^\ast, m \in \Z,
\]
\[
H_K(x + y) \leq H_K(x) + H_K(y),
\]
and
\[
H_K(xy) \leq H_K(x) + H_K(y)
\]
for $x, y \in K$. We conclude that
\[
H_K(x) = \frac{1}{2} \left(H_K(x) + H_K(x^{-1})\right) = \frac{1}{2} \sum_{v \in M_K} |v(x)| \geq \frac{1}{2}|S| \text{ for } x \in K^\ast, 
\]
where $S$ is the set of valuations $v \in M_K$ for which $v(x) \neq 0$.

Let $S$ be a finite subset of $M_K$. Then the group of $S$-units of $K$ is given by
\[
\mathcal{O}_S^\ast = \{x \in K^\ast : v(x) = 0 \text{ for } v \in M_K \setminus S\}.
\]
We denote by $g_{K/k}$ the genus of $K$ over $k$.

\begin{theorem}
\label{tMason}
Let $K$ be a finite extension of $k(z)$ and $S$ be a finite subset of $M_K$. Then for every solution of
\[
x + y = 1 \text{ in } x, y \in \mathcal{O}_S^\ast \setminus k^\ast
\]
we have
\[
\max(H_K(x), H_K(y)) \leq |S| + 2g_{K/k} - 2.
\]
\end{theorem}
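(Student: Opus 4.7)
The plan is to apply the Riemann--Hurwitz formula to the finite morphism of smooth projective curves determined by $x$. Let $X$ be the smooth projective curve over $k$ whose function field is $K$; its closed points are in bijection with $M_K$, and the valuation $v_P \in M_K$ attached to $P \in X$ records the local order of vanishing at $P$. Regard $x$ as a morphism $\phi\colon X \to \mathbb{P}^1_k$ of degree $n := [K : k(x)]$. Since $x \notin k$, the defining formula $H_K(x) = -\sum_{v \in M_K} \min(0, v(x))$ computes the total pole order of $x$, which coincides with $n$, so it suffices to bound $n$.

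The central observation is that the hypotheses $x, 1-x \in \mathcal{O}_S^\ast$ force every point of $\phi^{-1}(\{0, 1, \infty\})$ to lie in $S$: these are, respectively, the zeros of $x$, the zeros of $1-x = y$, and the common poles of $x$ and $y$. The three fibers are pairwise disjoint since $0$, $1$, $\infty$ are distinct. Writing $e_P$ for the ramification index of $\phi$ at $P$, each fiber has total multiplicity $n$, hence for $a \in \{0, 1, \infty\}$ we have $|\phi^{-1}(a)| = n - \sum_{P \in \phi^{-1}(a)} (e_P - 1)$. Summing over $a$ and using that all these distinct points lie in $S$ yields
\[
|S| \;\geq\; 3n \;-\; \sum_{P \in \phi^{-1}(\{0,1,\infty\})} (e_P - 1).
\]

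Next I invoke Riemann--Hurwitz. Because $k$ has characteristic $0$ every ramification of $\phi$ is tame, so
\[
2 g_{K/k} - 2 \;=\; -2n + \sum_{P \in X}(e_P - 1),
\]
and restricting the sum on the right to $P$ above $\{0,1,\infty\}$ only decreases it. Substituting into the previous inequality gives $|S| \geq 3n - (2 g_{K/k} - 2 + 2n) = n + 2 - 2 g_{K/k}$, i.e.\ $H_K(x) = n \leq |S| + 2 g_{K/k} - 2$. The hypothesis is symmetric in $x$ and $y = 1-x$, so the same argument applied with $y$ in place of $x$ delivers the matching bound $H_K(y) \leq |S| + 2 g_{K/k} - 2$, completing the proof.

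I do not anticipate any serious obstacle; this is the classical proof of Mason's theorem, whose only technical input beyond the height formalism is Riemann--Hurwitz. The one point that requires care is the dictionary between the ramification indices of $\phi$ and the valuations of $x$ and $1-x$, namely $e_P = v_P(x)$ at zeros of $x$, $e_P = v_P(1-x)$ at zeros of $y$, and $e_P = -v_P(x)$ at the common poles.
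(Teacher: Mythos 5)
Your argument is correct, but it is not the route the paper takes: the paper offers no proof at all, simply citing Chapter I, Section 3, Lemma 2 of Mason's book, where the inequality is established by a rather different method (a local analysis with derivations/logarithmic derivatives summed over valuations, in the spirit of the Mason--Stothers proof of the polynomial $abc$ theorem). Your proof instead reads the statement geometrically: $H_K(x)=[K:k(x)]$ is the degree of the morphism $\phi\colon X\to\mathbb{P}^1_k$ given by $x$, the $S$-unit hypotheses on $x$ and $y=1-x$ confine the fibres over $0,1,\infty$ to the points of $S$, and Riemann--Hurwitz bounds the total ramification over these three points. All the steps check out: the identification $H_K(x)=\deg\phi$ and the fibre count $\sum_{P\in\phi^{-1}(a)}e_P=n$ use that $k$ is algebraically closed, and tameness of all ramification uses characteristic $0$, both of which the paper assumes; the dictionary $e_P=v_P(x)$, $v_P(1-x)$, $-v_P(x)$ over $0,1,\infty$ is right, and the symmetry in $x$ and $y$ gives the bound for both heights. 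The trade-off is the usual one: your Riemann--Hurwitz argument is short and conceptual but leans on the smooth projective model and the genus formula, while Mason's valuation-theoretic proof is more elementary in its prerequisites and is the form that extends in his book to more general unit equations over function fields; for the statement as used in this paper the two are interchangeable.
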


\begin{proof}
See Chapter I, section 3, Lemma 2 of Mason \citep{Mason}.
\end{proof}

To apply this theorem, we need an upper bound for the genus. Such an upper bound is provided by the following lemma.

\begin{lemma}
\label{lSchmidt}
Let $K$ be the splitting field over $k(z)$ of $F := X^m + f_1X^{m - 1} + \cdots + f_m$, where $f_1, \ldots, f_m \in k[z]$. Then
\[
g_{K/k} \leq (d - 1)m \cdot \max_{1 \leq i \leq m} \deg f_i,
\]
where $d = [K : k(z)]$.
\end{lemma}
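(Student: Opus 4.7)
My plan is to apply the Riemann--Hurwitz formula to the Galois extension $K/k(z)$ and to bound the degree of the different $\mathfrak{D}_{K/k(z)}$ via an elementary estimate on the discriminant of $F$. I assume $F$ is separable; otherwise replace $F$ by $F/\gcd(F,F')$, which has the same splitting field.

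Set $N := \max_{1 \le i \le m} \deg f_i$. Since $k$ is algebraically closed of characteristic $0$, every $P \in M_{k(z)}$ has residue degree $1$, the extension $K/k(z)$ is Galois with only tame ramification, and $g_{k(z)/k} = 0$. Riemann--Hurwitz therefore reads
\[
g_{K/k} \;=\; 1 - d \;+\; \tfrac{1}{2}\sum_{P \in M_{k(z)}}\sum_{\mathfrak{P} \mid P}(e_{\mathfrak{P}} - 1) f_{\mathfrak{P}},
\]
so it suffices to bound the double sum.

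For each fixed $P$, the identity $\sum_{\mathfrak{P} \mid P} e_{\mathfrak{P}} f_{\mathfrak{P}} = d$ and the existence of at least one prime above $P$ force $\sum_{\mathfrak{P} \mid P}(e_{\mathfrak{P}}-1) f_{\mathfrak{P}} \le d - 1$. If moreover $P$ corresponds to a finite point $z = a \in k$ with $\mathrm{disc}_X(F)(a) \neq 0$, then $F(X,a) \in k[X]$ is separable and Hensel's lemma splits $F$ completely over the completion $k(\!(z-a)\!)$; the local extension $K_{\mathfrak{P}}/k(z)_P$ is then trivial for every $\mathfrak{P} \mid P$, so $P$ is unramified in $K$. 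Hence only the zeros of $\mathrm{disc}_X(F) \in k[z]$ together with the single infinite prime of $k(z)$ can ramify, contributing primes whose total degree is at most $\deg_z \mathrm{disc}_X(F) + 1$.

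The remaining step, which is the main (though still routine) obstacle, is to bound $\deg_z \mathrm{disc}_X(F)$. I do this via the Sylvester matrix of $F$ and $F'$: it has size $(2m-1) \times (2m-1)$, and each of its nonzero entries is either a constant in $k$ (coming from the leading coefficients of $F$ and $F'$) or a scalar multiple of some $f_i$, hence of $z$-degree at most $N$. Every term in the Leibniz expansion is therefore a product of $2m-1$ entries of $z$-degree at most $N$, so $\deg_z \mathrm{disc}_X(F) \le (2m-1)N$. Combining the estimates yields
\[
\sum_{\mathfrak{P} \in M_K}(e_{\mathfrak{P}} - 1) f_{\mathfrak{P}} \;\le\; (d-1)\bigl((2m-1)N + 1\bigr),
\]
and plugging into Riemann--Hurwitz gives $g_{K/k} \le (d-1)\bigl((2m-1)N - 1\bigr)/2 < (d-1)\, m\, N$, which is the desired bound.
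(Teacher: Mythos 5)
Your proof is correct, but note that it is necessarily a different route from the paper's: the paper gives no argument at all for this lemma and simply quotes it as Lemma~H of Schmidt \citep{Schmidt}, whereas you supply a self-contained proof. Your chain of steps is sound: Riemann--Hurwitz for $K/k(z)$ with only tame ramification in characteristic $0$, the bound $\sum_{\mathfrak{P}\mid P}(e_{\mathfrak{P}}-1)f_{\mathfrak{P}}\le d-1$ for each place $P$ of $k(z)$, the Hensel argument showing that only the zeros of $\mathrm{disc}_X(F)$ and the infinite place can ramify, and the Sylvester-matrix estimate $\deg_z\mathrm{disc}_X(F)\le(2m-1)N$; the final computation $1-d+\tfrac12(d-1)\bigl((2m-1)N+1\bigr)=\tfrac12(d-1)\bigl((2m-1)N-1\bigr)\le(d-1)mN$ checks out, including the degenerate cases $N=0$ and $d=1$. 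What your approach buys is an elementary, verifiable argument in place of a black-box citation, in exactly the generality the paper needs (algebraically closed constant field, characteristic $0$). One point you should make explicit: in the reduction to separable $F$ you replace $F$ by $F/\gcd(F,F')$, and the final bound is in terms of the original $m$ and $N$, so you need that this quotient is again monic in $X$ with coefficients in $k[z]$ of $z$-degree at most $N$; this follows from Gauss's lemma together with multiplicativity of the Gauss norm attached to the degree valuation at infinity (both monic factors have nonnegative norm, so each has norm at most that of $F$), and the smaller degree in $X$ only helps. In the paper's application the lemma is only invoked for a minimal polynomial, which is automatically separable, so this refinement matters only for the lemma as stated.
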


\begin{proof}
This is lemma H of Schmidt \citep{Schmidt}.
\end{proof}

\subsection{Algebraic number fields}
Let $K$ be an algebraic number field with ring of integers $\mathcal{O}_K$. We introduce a collection of absolute values $\{|\cdot|_v\}$ on $K$. A real place of $K$ is a set $\{\sigma\}$ where $\sigma: K \rightarrow \mathbb{R}$ is a real embedding of $K$. A complex place of $K$ is a pair $\{\sigma, \overline{\sigma}\}$ of conjugate complex embeddings $K \rightarrow \mathbb{C}$. An infinite place is a real or complex place. A finite place of $K$ is a non-zero prime ideal of $\mathcal{O}_K$. Denote by $M_K$ the set of all places of $K$.

We associate to every place $v \in M_K$ an absolute value $|\cdot|_v$, which we define as follows for $\alpha \in K$:
\begin{align*}
|\alpha|_v &:= |\sigma(\alpha)| \text{ if } v = \{\sigma\} \text{ is real;} \\
|\alpha|_v &:= |\sigma(\alpha)|^2 = |\overline{\sigma}(\alpha)|^2 \text{ if } v = \{\sigma, \overline{\sigma}\} \text{ is complex;} \\
|\alpha|_v &:= N(\mathfrak{p})^{-\text{ord}_\mathfrak{p}(\alpha)} \text{ if } v = \mathfrak{p} \text{ is a prime ideal of } \mathcal{O}_K, \text{ where } N(\mathfrak{p}) := |\mathcal{O}_K/\mathfrak{p}|.
\end{align*}
Then we have the product formula over $K$
\[
\prod_{v \in M_K} |\alpha|_v = 1
\]
for $\alpha \in K^\ast$. Later on it will be useful to deal with all absolute values simultaneously. For this we have the useful inequality
\[
|x_1 + \ldots + x_n| \leq n^{s(v)} \max(|x_1|_v, \ldots, |x_n|_v)
\]
for $v \in M_K$, $x_1, \ldots, x_n \in K$, where $s(v) = 1$ if $v$ is real, $s(v) = 2$ if $v$ is complex and $s(v) = 0$ if $v$ is finite. Furthermore, $|\alpha|_v^{1/2}$ satisfies the triangle inequality for all $v \in M_K$.

Let $S$ denote a finite subset of $M_K$ containing all infinite places. Write $s = |S|$. We define the ring of $S$-integers by
\[
\mathcal{O}_S := \{\alpha \in K: |\alpha|_v \leq 1 \text{ for all } v \in M_K \setminus S\}.
\]
This is a subring of $K$ containing $\mathcal{O}_K$, hence it is a Dedekind domain. Concretely, this means that every non-zero ideal of $\mathcal{O}_S$ factors uniquely into prime ideals.

Let $W_K$ denote the group of roots of unity of $K$. Then we have the following important generalization of the well-known Dirichlet's unit theorem.
\begin{theorem}
\label{tDirichlet}
We have
\[
\mathcal{O}_S^\ast \cong W_K \times \Z^{s - 1}.
\]
More explicitly, there are $\varepsilon_1, \ldots, \varepsilon_{s - 1} \in \mathcal{O}_S^\ast$ such that every $\varepsilon \in \mathcal{O}_S^\ast$ can be expressed uniquely as
\[
\varepsilon = \zeta \varepsilon_1^{b_1} \cdots \varepsilon_{s - 1}^{b_{s - 1}},
\]
where $\zeta$ is a root of unity of $K$ and $b_1, \ldots, b_{s - 1}$ are rational integers.
\end{theorem}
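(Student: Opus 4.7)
The plan is to deduce this $S$-unit theorem from the classical Dirichlet unit theorem (which corresponds to $S = S_\infty$, the set of archimedean places) together with the finiteness of the ideal class group. Write $S_\infty$ for the infinite places of $K$ and $S_f := S \setminus S_\infty$ for the finite places contained in $S$, with $t := |S_f|$; writing $r_1, r_2$ for the numbers of real embeddings and pairs of complex conjugate embeddings so that $|S_\infty| = r_1 + r_2$, we have the arithmetic identity $s - 1 = (r_1 + r_2 - 1) + t$.

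First I would introduce the homomorphism
\[
\varphi : \mathcal{O}_S^\ast \longrightarrow \Z^t, \quad \alpha \longmapsto (\operatorname{ord}_\mathfrak{p}(\alpha))_{\mathfrak{p} \in S_f},
\]
and identify its kernel. If $\varphi(\alpha) = 0$ then the fractional ideal $(\alpha)$ is a unit at every finite place of $K$ (outside $S$ by the definition of $\mathcal{O}_S^\ast$, and inside $S_f$ by assumption), hence $\alpha \in \mathcal{O}_K^\ast$; so $\ker \varphi = \mathcal{O}_K^\ast$. Next I would use finiteness of the class number $h_K$ to show that $\operatorname{Im}(\varphi)$ has finite index in $\Z^t$: for each $\mathfrak{p} \in S_f$ the ideal $\mathfrak{p}^{h_K}$ is principal, say $\mathfrak{p}^{h_K} = (\alpha_\mathfrak{p})$ with $\alpha_\mathfrak{p} \in \mathcal{O}_S^\ast$, and the vectors $\varphi(\alpha_\mathfrak{p})$ span the sublattice $h_K \Z^t$. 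A finite-index subgroup of $\Z^t$ is itself free abelian of rank $t$, so $\operatorname{Im}(\varphi) \cong \Z^t$.

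Combining these steps yields a short exact sequence
\[
1 \longrightarrow \mathcal{O}_K^\ast \longrightarrow \mathcal{O}_S^\ast \longrightarrow \operatorname{Im}(\varphi) \longrightarrow 1,
\]
whose rightmost term is free abelian, so the sequence splits. Invoking the classical Dirichlet unit theorem to write $\mathcal{O}_K^\ast \cong W_K \times \Z^{r_1 + r_2 - 1}$ then gives $\mathcal{O}_S^\ast \cong W_K \times \Z^{s-1}$, as required. A system of fundamental $S$-units $\varepsilon_1, \ldots, \varepsilon_{s-1}$ can be produced by taking a fundamental system of units of $\mathcal{O}_K$ together with any $\varphi$-preimages of a $\Z$-basis of $\operatorname{Im}(\varphi)$, and uniqueness of the exponents $b_i$ in $\varepsilon = \zeta \varepsilon_1^{b_1} \cdots \varepsilon_{s-1}^{b_{s-1}}$ follows from the torsion freeness of $\Z^{s-1}$.

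The only genuine difficulty in this plan sits in the classical Dirichlet unit theorem, which I would not re-prove: it requires a geometry-of-numbers argument (Minkowski's theorem applied to the logarithmic embedding, plus Northcott-style bounds to identify the torsion subgroup as $W_K$ and to establish discreteness of the image). Once that and the finiteness of the ideal class group are granted, the promotion from $\mathcal{O}_K^\ast$ to $\mathcal{O}_S^\ast$ is essentially formal, so I would present this step as a short reduction rather than as a new analytic argument.
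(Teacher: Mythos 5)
Your argument is correct. The paper does not actually give a proof here: it simply cites Lang (the standard reference), where the $S$-unit theorem is proved directly via the logarithmic embedding $\alpha \mapsto (\log|\alpha|_v)_{v \in S}$ of $\mathcal{O}_S^\ast$ into $\mathbb{R}^s$, showing the kernel is $W_K$ and the image is a full lattice in the hyperplane where the coordinates sum to zero --- i.e.\ a geometry-of-numbers argument generalizing the classical case. Your route is the other standard one: reduce to the classical Dirichlet unit theorem through the valuation map $\varphi : \mathcal{O}_S^\ast \to \Z^t$, identify $\ker\varphi = \mathcal{O}_K^\ast$, use $\mathfrak{p}^{h_K} = (\alpha_\mathfrak{p})$ with $\alpha_\mathfrak{p} \in \mathcal{O}_S^\ast$ to see that $\operatorname{Im}(\varphi) \supseteq h_K\Z^t$ has finite index and is therefore free of rank $t$, and split the resulting exact sequence. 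All the steps check out (in particular $\varphi(\alpha_\mathfrak{p}) = h_K e_\mathfrak{p}$, so the index claim is exact), and the count $s - 1 = (r_1 + r_2 - 1) + t$ gives the stated rank; uniqueness of the exponents is indeed immediate once the $\varepsilon_i$ form a basis of a free complement of $W_K$, which your construction provides. The trade-off is the expected one: your reduction treats the classical unit theorem and the finiteness of the class group as black boxes and makes the passage from $\mathcal{O}_K^\ast$ to $\mathcal{O}_S^\ast$ purely formal, whereas the cited proof handles all of $\mathcal{O}_S^\ast$ in one uniform lattice argument without invoking the class group.
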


\begin{proof}
See page 104 in \citep{Lang}.
\end{proof}

A system $\{\varepsilon_1, \ldots, \varepsilon_{s - 1}\}$ as above is called a fundamental system of $S$-units. Write $S = \{v_1, \ldots, v_s\}$. We define the $S$-regulator by
\[
R_S := \left|\det\left(\log|\varepsilon_i|_{v_j} \right)_{i, j = 1, \ldots, s - 1} \right|.
\]
Then $R_S \neq 0$ and furthermore $R_S$ is independent of the choice of $\varepsilon_1, \ldots, \varepsilon_{s - 1}$ and of the choice $v_1, \ldots, v_{s - 1}$ of $S$. Then we have by Lemma 3 in \citep{BG}
\begin{align}
\label{sregl}
R_S \geq 0.2052 (\log 2)^t,
\end{align}
where we recall that $t$ is the number of finite places of $S$.

We define the absolute multiplicative height of $\alpha \in K$ by
\[
H(\alpha) := \prod_{v \in M_K} \max(1, |\alpha|_v)^{1/[K: \Q]}.
\]
Next we define the absolute logarithmic height by
\[
h(\alpha) := \log H(\alpha).
\]
Let $\alpha, \alpha_1, \ldots, \alpha_n \in K$ and $m \in \Z$. Then we have the following important properties
\begin{align*}
&h(\alpha_1 \cdots \alpha_n) \leq \sum_{i = 1}^n h(\alpha_i); \\
&h(\alpha_1 + \cdots + \alpha_n) \leq \log n + \sum_{i = 1}^n h(\alpha_i); \\
&h(\alpha^m) = |m| h(\alpha) \text{ if } \alpha \neq 0.
\end{align*}
For a proof of the above properties, see chapter 3 in \citep{Waldschmidt}. Furthermore, we have Northcott's theorem. 

\begin{theorem}
\label{tNorthcott}
Let $D, H$ be positive reals. Then there are only finitely many $\alpha \in \overline{\Q}$ such that $\deg \alpha \leq D$ and $h(\alpha) \leq H$.
\end{theorem}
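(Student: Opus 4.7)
The plan is to reduce Northcott's theorem to the finiteness of integer polynomials of bounded degree and bounded coefficients, via the classical identity between the absolute multiplicative height and the Mahler measure.

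First, for each $\alpha \in \overline{\Q}$ with $d := \deg \alpha \leq D$ and $h(\alpha) \leq H$, let $P_\alpha(X) = a_0 X^d + a_1 X^{d - 1} + \cdots + a_d \in \Z[X]$ be the primitive minimal polynomial of $\alpha$ with $a_0 > 0$, factored as $P_\alpha(X) = a_0 \prod_{i = 1}^d (X - \alpha^{(i)})$ in $\overline{\Q}[X]$. I would establish the identity
\[
d \cdot h(\alpha) = \log|a_0| + \sum_{i = 1}^d \log\max(1, |\alpha^{(i)}|),
\]
so that the right-hand side equals $\log M(\alpha)$, the log-Mahler measure of $\alpha$. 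The archimedean contribution is immediate from the definition of $h$ once one sums the absolute values over all embeddings of $K := \Q(\alpha)$ into $\mathbb{C}$ and divides by $[K : \Q] = d$. The non-archimedean contribution requires Gauss's lemma together with the product formula: one verifies that $\prod_{v \nmid \infty} \max(1, |\alpha|_v)^{1/[K : \Q]} = |a_0|^{1/d}$ by noting that $a_0$ is, up to sign, precisely the product over rational primes of the smallest power needed to clear denominators of $\alpha$ locally at each prime.

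Second, from $h(\alpha) \leq H$ and $d \leq D$ one obtains $M(\alpha) \leq e^{DH}$, and expanding the product while bounding elementary symmetric functions yields
\[
|a_k| \leq \binom{d}{k} |a_0| \prod_{i = 1}^d \max(1, |\alpha^{(i)}|) \leq 2^D e^{DH}
\]
for each $0 \leq k \leq d$. Therefore every $P_\alpha$ lies in the finite set of polynomials in $\Z[X]$ of degree at most $D$ whose coefficients have absolute value at most $2^D e^{DH}$. Since each such polynomial has at most $D$ roots, only finitely many $\alpha \in \overline{\Q}$ can satisfy the hypotheses.

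The main obstacle is the identity in the first step, specifically the collapse of the non-archimedean contributions into the single integer $a_0$; this requires a careful application of Gauss's lemma prime by prime. Once this identity is available, the rest is a routine bound on the coefficients of $P_\alpha$ by the Mahler measure, followed by a finiteness-of-lattice-points argument in the coefficient space.
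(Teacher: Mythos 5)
Your proof is correct. The paper does not prove this statement itself --- it simply cites Theorem 1.9.3 of \citep{Book} --- and your argument (the identity $d\,h(\alpha) = \log|a_0| + \sum_i \log\max(1,|\alpha^{(i)}|)$ via Gauss's lemma and the product formula, then the coefficient bound $|a_k| \leq 2^{D}e^{DH}$ and a count of integer polynomials of bounded degree and height) is precisely the standard proof found in that reference.
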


\begin{proof}
See Theorem 1.9.3 in \citep{Book}.
\end{proof}

\section{Lemmas}
In this section we will formulate the necessary lemmas. This section is subdivided into three subsections. In the first subsection we will give some algebraic lemmas. In the second and third subsection we cover advanced lemmas concerning linear forms in logarithms and the hyperelliptic equation.

Let $K$ be a number field of degree $d$, discriminant $D_K$ and denote by $M_K$ the set of places of $K$. Let $S$ be a finite subset of $M_K$ containing all infinite places. Write $s = |S|$. Let $\mathfrak{p}_1, \ldots, \mathfrak{p}_t$ be the prime ideals in $S$. Put
\[
P := \max\{2, N(\mathfrak{p}_1), \ldots, N(\mathfrak{p}_t)\}
\]
and
\[
Q := N(\mathfrak{p}_1 \cdots \mathfrak{p}_t).
\]

\subsection{Algebraic lemmas}
Our first lemma gives a lower bound for the height of $\alpha \in K$.

\begin{lemma}
\label{lVoutier}
Let $\alpha \in K$, $\alpha \neq 0$, $\alpha$ not a root of unity. Then
\begin{align}
dh(\alpha) \geq \frac{\log 2}{(\log(3d))^3} =: \cst{1}.
\end{align}
\end{lemma}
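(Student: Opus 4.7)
The plan is not to reprove this bound from scratch but rather to invoke a known effective version of Dobrowolski's lower bound for heights of non-torsion algebraic numbers. The specific numerical shape $\log 2 / (\log(3d))^3$ is recognisable as Voutier's explicit refinement (following Dobrowolski, Cantor-Straus, Louboutin) of the Lehmer-type inequality for the Mahler measure, which is why the lemma is labelled \texttt{lVoutier}. So the first move is simply to cite Voutier's paper and note that the statement is a direct reformulation of his bound: one translates between Mahler measure $M(\alpha)$ and the absolute logarithmic height via $h(\alpha) = \frac{1}{d}\log M(\alpha)$, where $d$ is the degree of the minimal polynomial of $\alpha$ over $\Q$. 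If the $\alpha$ of the lemma happens to have degree strictly less than $[K:\Q]$, the inequality is only made stronger since $h(\alpha)$ does not depend on the ambient field while the factor $d$ on the left-hand side can only grow.

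For completeness I would sketch the idea of Voutier's argument, which proceeds by the auxiliary polynomial / Frobenius method: given $\alpha$ with conjugates $\alpha_1,\ldots,\alpha_d$, one chooses a prime $p$ of controlled size and observes that any polynomial $P\in\Z[X]$ of small degree and small height vanishing at $\alpha_i$ to high order would, via the Frobenius relation $\alpha_i^p \equiv \alpha_{\sigma(i)}\pmod{\mathfrak p}$ for an appropriate prime $\mathfrak p$ over $p$, force an arithmetic contradiction unless $M(\alpha)$ is bounded below by the desired quantity. One then balances the free parameters (degree of $P$, order of vanishing, size of $p$) to obtain the factor $(\log 3d)^3$ in the denominator.

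The only genuine check to perform is dimensional: verifying that the constant $\log 2$ and the $\log(3d)$ (rather than $\log d$) are indeed what Voutier produces, so that the bound is valid uniformly for all $d\ge 1$, including small $d$ where one might worry about $\log d$ degenerating. This is easy to confirm by direct substitution into Voutier's stated theorem.

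The main obstacle is essentially bookkeeping rather than mathematics: making sure the normalisation of $h(\cdot)$ used here (the absolute logarithmic Weil height, normalised so that $h(\alpha)=\frac{1}{[K:\Q]}\sum_v \log\max(1,|\alpha|_v)$ with the conventions on $|\cdot|_v$ fixed earlier in the section) agrees with the normalisation in Voutier's paper, so that the transfer of the constant $\log 2$ goes through without an extra factor. Once that is matched up, the lemma follows immediately, and we set $\cst{1} := \log 2/(\log(3d))^3$ for later reference.
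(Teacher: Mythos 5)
Your proposal matches the paper's proof, which likewise consists of simply invoking Voutier's effective lower bound for the height of a non-torsion algebraic number (the reference \citep{PMV}) and reading off the constant $\log 2/(\log(3d))^3$; your additional remarks on the normalisation of $h$ and on the case where $\deg\alpha < [K:\Q]$ (where the inequality only becomes stronger) are correct bookkeeping that the paper leaves implicit. No gap.
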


\begin{proof}
This follows from the work in \citep{PMV}.
\end{proof}

Now we need some results on $S$-units. Lemma \ref{lSunit} is an effective version of Theorem \ref{tDirichlet}.

\begin{lemma}
\label{lSunit}
There is a fundamental system of $S$-units $\{\eta_1, \ldots ,\eta_{s-1}\}$ such that
\begin{enumerate}
\item[(i)] $\prod_{i=1}^{s-1} h(\eta_i) \leq (2s)^{O(s)}R_S$,
\item[(ii)] $h(\eta_i)\leq (2s)^{O(s)}R_S$ for $i = 1, \ldots ,s - 1$,
\item[(iii)] the absolute values of the entries of the inverse of the matrix $(\log |\eta_i|_{v_j})_{i,j = 1, \ldots ,s - 1}$ do not exceed $(2s)^{O(s)}$.
\end{enumerate}
\end{lemma}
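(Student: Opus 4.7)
The plan is to recast $\mathcal{O}_S^*/W_K$ as a full-rank lattice in $\mathbb{R}^{s-1}$ and then combine Minkowski's theorems with the height lower bound of Lemma \ref{lVoutier}. First, consider the truncated logarithmic embedding $\Phi \colon \mathcal{O}_S^* \to \mathbb{R}^{s-1}$ defined by $\Phi(\eta) = (\log|\eta|_{v_1}, \ldots, \log|\eta|_{v_{s-1}})$. By Theorem \ref{tDirichlet}, $\Lambda := \Phi(\mathcal{O}_S^*)$ is a lattice of rank $s-1$ with $\ker \Phi = W_K$, and by definition its covolume is $R_S$. The product formula combined with $|\eta|_v = 1$ for $v \notin S$ yields $\sum_{v \in S}\log|\eta|_v = 0$, so $|\log|\eta|_{v_s}| \leq (s-1)\|\Phi(\eta)\|_\infty$ and
\[
\tfrac{1}{2d}\|\Phi(\eta)\|_\infty \;\leq\; h(\eta) \;=\; \tfrac{1}{2d}\sum_{v \in S}|\log|\eta|_v| \;\leq\; \tfrac{s(s-1)}{2d}\|\Phi(\eta)\|_\infty,
\]
linking the $\ell^\infty$-norm on $\Lambda$ to the logarithmic height.

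Next, I would apply Lemma \ref{lVoutier} to obtain an absolute lower bound for the first successive minimum $\lambda_1$ of $\Lambda$ (with respect to $\|\cdot\|_\infty$): for $\eta \in \mathcal{O}_S^* \setminus W_K$ we have $dh(\eta) \geq \cst{1}$, which combined with the display above forces $\|\Phi(\eta)\|_\infty \geq 2\cst{1}/(s(s-1))$. Minkowski's second theorem, applied with the cube $[-1,1]^{s-1}$ of volume $2^{s-1}$, then gives $\prod_{i=1}^{s-1}\lambda_i \leq R_S$. A classical result of Mahler in the geometry of numbers produces a basis $\Phi(\eta_1),\ldots,\Phi(\eta_{s-1})$ of $\Lambda$ with $\|\Phi(\eta_i)\|_\infty \leq \max(1, i/2)\lambda_i$ for each $i$. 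Combining these estimates with the lower bound on $\lambda_1$ yields $\|\Phi(\eta_i)\|_\infty \leq (2s)^{O(s)} R_S$ for each $i$ as well as $\prod_i \|\Phi(\eta_i)\|_\infty \leq (2s)^{O(s)} R_S$; converting back to heights through the display above gives (i) and (ii).

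Finally, for (iii), set $M = (\log|\eta_i|_{v_j})_{1 \leq i,j \leq s-1}$, so that $|\det M| = R_S$. By Cramer's rule, every entry of $M^{-1}$ is $\pm 1/R_S$ times an $(s-2)\times(s-2)$ minor of $M$; Hadamard's inequality bounds each such minor by $(s-1)^{(s-2)/2}\prod_{k \neq j}\|\Phi(\eta_k)\|_\infty$. Using the product bound from the previous paragraph and dividing out the factor $\|\Phi(\eta_j)\|_\infty \geq 2\cst{1}/(s(s-1))$, I obtain
\[
|(M^{-1})_{ij}| \;\leq\; \frac{(s-1)^{(s-2)/2}\,(2s)^{O(s)} R_S}{R_S \cdot 2\cst{1}/(s(s-1))} \;\leq\; (2s)^{O(s)}.
\]
The main obstacle is producing the three bounds in a uniform way: the upper estimate from Minkowski controls the numerators in the Cramer expansion, but it is the absolute lower bound from Lemma \ref{lVoutier} that prevents the first successive minimum from collapsing and is what makes it possible in the last step to cancel the $R_S$ in the denominator of $M^{-1}$; without both ingredients simultaneously, (iii) fails.
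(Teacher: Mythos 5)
Your argument is sound, but note that the paper does not prove this lemma at all: it simply quotes Lemma 1 of \citep{BG}, and the proof of that cited result is in essence the geometry-of-numbers argument you reconstruct. Your chain of reductions is the standard one and it works at the stated level of precision: the truncated logarithmic embedding $\Phi$ makes $\mathcal{O}_S^\ast/W_K$ a full lattice in $\mathbb{R}^{s-1}$ of covolume $R_S$; the identity $h(\eta)=\tfrac{1}{2d}\sum_{v\in S}\bigl|\log|\eta|_v\bigr|$ (valid because the paper's absolute values are normalized and the product formula kills the $v_s$-term up to a factor $s-1$) ties heights to $\|\Phi(\eta)\|_\infty$ with only polynomial-in-$s$ losses; Lemma \ref{lVoutier} together with $d\le 2s$ gives $\lambda_1\ge 2c_1/(s(s-1))\ge (2s)^{-O(1)}$; Minkowski's second theorem for the cube gives $\prod_i\lambda_i\le R_S$; Mahler's basis theorem and the lower bound on $\lambda_1$ then yield both the individual and the product bounds, hence (i) and (ii); and Cramer plus Hadamard, with the same cancellation of $\|\Phi(\eta_j)\|_\infty\ge 2c_1/(s(s-1))$ against the product bound, gives (iii). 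The cited lemma is sharper (explicit constants rather than $(2s)^{O(s)}$), obtained by the same mechanism with finer estimates, so your version loses nothing that the paper needs. Two points worth making explicit if you write this up: a basis of $\Phi(\mathcal{O}_S^\ast)$ lifts to a fundamental system of $S$-units precisely because $\ker\Phi=W_K$ and $\mathcal{O}_S^\ast/W_K\cong\mathbb{Z}^{s-1}$; and in (iii) the cofactor of the $(i,j)$ entry of $M^{-1}$ omits the row indexed by $j$, so the factor you divide out must indeed be $\|\Phi(\eta_j)\|_\infty$ — as you have it, but the indexing deserves a sentence.
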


\begin{proof}
This is a less precise version of Lemma 1 in \citep{BG}.
\end{proof}

Let $h$ denote the class number of $K$, let $r$ be the unit rank and let $R$ be the regulator of $K$. Put
\[
\cst{2} :=
\left\{
	\begin{array}{ll}
		0  & \mbox{if } r = 0, \\
		1/d & \mbox{if } r = 1, \\
		29er! r \sqrt{r - 1} \log d & \mbox{if } r \geq 2.
	\end{array}
\right.
\]
Define the $S$-norm of $\alpha \in K$ by $N_S(\alpha) := \prod_{v \in S} |\alpha|_v$.

\begin{lemma}
\label{lhsmall}
Let $\alpha \in \mathcal{O}_K \setminus \{0\}$ and let $n$ be a positive integer. Then there exists $\varepsilon \in \mathcal{O}_S^\ast$ such that
\[
h(\varepsilon^n \alpha) \leq \frac{1}{d} \log N_S(\alpha) + n\left(\cst{2} R + \frac{h}{d} \log Q\right).
\]
\end{lemma}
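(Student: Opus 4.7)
The plan is to construct $\varepsilon$ in two stages: first an $S$-unit $\varepsilon_0$ that absorbs the $\mathfrak{p}_i$-adic valuations of $\alpha$ (this accounts for the $\frac{nh}{d}\log Q$ contribution to the bound), then multiplication by an ordinary unit $u\in\mathcal{O}_K^\ast$ that equidistributes the resulting infinite-place absolute values (accounting for the $n\cst{2}R$ contribution, via a covering-radius bound on the log-unit lattice of $\mathcal{O}_K^\ast$). Finite places outside $S$ will contribute nothing to $h(\varepsilon^n\alpha)$ because $\alpha\in\mathcal{O}_K$ and both $\varepsilon_0, u$ are $S$-units.

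For the first stage I would use the class number of $K$: for each $i=1,\ldots,t$ write $\mathfrak{p}_i^h=(\pi_i)$ with $\pi_i\in\mathcal{O}_K$. Since the only prime in the support of $(\pi_i)$ is $\mathfrak{p}_i\in S$, in fact $\pi_i\in\mathcal{O}_S^\ast$. Setting $a_i:=\mathrm{ord}_{\mathfrak{p}_i}(\alpha)\geq 0$, $q_i:=\lfloor a_i/(nh)\rfloor$ and $\varepsilon_0:=\prod_i\pi_i^{-q_i}\in\mathcal{O}_S^\ast$, one obtains $\mathrm{ord}_{\mathfrak{p}_i}(\varepsilon_0^n\alpha)=a_i-nhq_i\in[0,nh)$, so $|\varepsilon_0^n\alpha|_{\mathfrak{p}_i}\leq 1$ and $\log|\varepsilon_0^n\alpha|_{\mathfrak{p}_i}>-nh\log N(\mathfrak{p}_i)$. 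Writing $B:=\sum_{v\mid\infty}\log|\varepsilon_0^n\alpha|_v$ and combining the product formula with $\sum_{v\notin S}\log|\alpha|_v=-\log N_S(\alpha)$, a short computation gives $\log N_S(\alpha)\leq B\leq\log N_S(\alpha)+nh\log Q$, with $B\geq 0$ since $N_S(\alpha)\geq 1$.

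In the second stage I would invoke a covering-radius estimate for the log-embedding of $\mathcal{O}_K^\ast$ into the hyperplane $\{\sum_{v\mid\infty}x_v=0\}\subset\mathbb{R}^{r+1}$ to produce $u\in\mathcal{O}_K^\ast$ satisfying $\bigl|n\log|u|_v+\log|\varepsilon_0^n\alpha|_v-B/(r+1)\bigr|\leq nd\cst{2}R/(r+1)$ at every infinite $v$. The piecewise definition of $\cst{2}$ is tailored precisely so that this estimate holds: trivially for $r=0$ (no units to adjust), by a one-dimensional argument for $r=1$, and via a Bugeaud--Győry type covering-radius bound for $r\geq 2$. Setting $\varepsilon:=u\varepsilon_0\in\mathcal{O}_S^\ast$, the finite places contribute nothing to $h(\varepsilon^n\alpha)$ (using $|u|_v=1$ for finite $v$ together with the first-stage bound at the $\mathfrak{p}_i$), while each of the $r+1$ infinite-place terms $\log\max(1,|\varepsilon^n\alpha|_v)$ is at most $B/(r+1)+nd\cst{2}R/(r+1)$. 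Summing over infinite places and dividing by $d$, and then using $B\leq\log N_S(\alpha)+nh\log Q$, delivers the stated inequality.

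The main obstacle is the covering-radius estimate in the second stage: producing an explicit $u$ with the constant $\cst{2}$, especially for $r\geq 2$, requires a quantitative Dirichlet unit theorem for $\mathcal{O}_K^\ast$ in the spirit of Lemma \ref{lSunit}, together with a closest-vector argument in the log-unit lattice. Everything else---the class-number construction of $\varepsilon_0$, the finite-place bookkeeping via the product formula, and the final assembly of the height bound---is routine.
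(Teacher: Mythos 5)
Your two-stage argument---absorbing the $\mathfrak{p}_i$-adic valuations of $\alpha$ via the class-number trick $\mathfrak{p}_i^h=(\pi_i)$ with $\pi_i\in\mathcal{O}_S^\ast$, and then balancing the archimedean absolute values by an ordinary unit obtained from a quantitative form of Dirichlet's unit theorem with the constant $c_2$---is essentially the proof of the result the paper simply cites here (Proposition 4.3.12 in \citep{Book}); the paper gives no independent argument. The one step you invoke without proof, the balancing/covering-radius estimate for the log-unit lattice with the specific constant $c_2$, is exactly the auxiliary proposition established in that source, so your sketch is a correct reconstruction of the intended proof.
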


\begin{proof}
See Proposition 4.3.12 in \citep{Book}.
\end{proof}

Let $\alpha, \beta \in K^\ast$. Put
\[
H := \max\{1, h(\alpha), h(\beta)\}.
\]
We define $\log^\ast x := \max(1, \log x)$ for any real number $x > 0$.

\begin{lemma}
\label{lSuniteq}
Every solution $x$, $y$ of
\[
\alpha x + \beta y = 1 \text{ in } x, y \in \mathcal{O}_S^\ast
\]
satisfies
\[
\max(h(x), h(y)) < (2s)^{O(s)} (P/ \log P) H R_S \max\{\log P, \log^\ast R_S\}.
\]
\end{lemma}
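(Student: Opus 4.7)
The plan is to reduce the $S$-unit equation to a linear form in logarithms at a single, suitably chosen place $v_0\in S$, and invoke the effective lower bounds of Baker (archimedean $v_0$) or Yu (finite $v_0$). Using Lemma \ref{lSunit}, first write $x=\zeta\prod_{i=1}^{s-1}\eta_i^{b_i}$ and $y=\zeta'\prod_{i=1}^{s-1}\eta_i^{b_i'}$ with $\zeta,\zeta'\in W_K$ and integer exponents, and set $B:=\max_i(|b_i|,|b_i'|)$. Inverting the matrix $(\log|\eta_i|_{v_j})_{i,j}$ via Lemma \ref{lSunit}(iii) converts height data into exponent data, yielding $B\le (2s)^{O(s)}\max(h(x),h(y))$; conversely Lemma \ref{lSunit}(ii) gives $\max(h(x),h(y))\le (2s)^{O(s)}R_S\,B$. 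Bounding $B$ therefore suffices.

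After interchanging the roles of $(x,\alpha)$ and $(y,\beta)$ if necessary, choose $v_0\in S$ so that $|y|_{v_0}$ is largest: the product formula $\sum_{v\in S}\log|y|_v=0$, combined with Lemma \ref{lSunit}(iii), forces $\log|y|_{v_0}\ge B/(2s)^{O(s)}$. The identity $\alpha x+\beta y=1$ rewrites as
\[
-\frac{\alpha x}{\beta y}-1 \;=\; -\frac{1}{\beta y},
\]
so taking $v_0$-valuations and absorbing the contribution of $\beta$ into $H$ one obtains
\[
\log\Bigl|{-\alpha x/(\beta y)}-1\Bigr|_{v_0}\;\le\;-B/(2s)^{O(s)}+O(H).
\]
On the other hand $-\alpha x/(\beta y)$ is a multiplicative form in the $s$ algebraic numbers $\tfrac{-\alpha\zeta}{\beta\zeta'},\eta_1,\ldots,\eta_{s-1}$ with exponent vector $(1,b_1-b_1',\ldots,b_{s-1}-b'_{s-1})$. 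Applying Baker's theorem when $v_0$ is infinite, or Yu's $p$-adic analogue for the rational prime $p\le P$ below $v_0$ when $v_0$ is finite (the $p$-adic case contributing the factor $P/\log P$), yields a lower bound
\[
\log\Bigl|{-\alpha x/(\beta y)}-1\Bigr|_{v_0}\;\ge\;-(2s)^{O(s)}\,\frac{P}{\log P}\,H\,\Bigl(\prod_{i=1}^{s-1}h(\eta_i)\Bigr)\log^*B,
\]
and Lemma \ref{lSunit}(i) lets one replace $\prod h(\eta_i)$ by $(2s)^{O(s)}R_S$.

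Comparing the two estimates for $\log|-\alpha x/(\beta y)-1|_{v_0}$ and solving for $B$ via the elementary implication $B\le A\log^*B\Rightarrow B\le 2A\log^*A$ yields
\[
B\;\le\;(2s)^{O(s)}\,\frac{P}{\log P}\,H\,R_S\,\log^*\!\Bigl((2s)^{O(s)}\,\frac{P}{\log P}\,H\,R_S\Bigr);
\]
substituting this into $\max(h(x),h(y))\le (2s)^{O(s)}R_S\,B$, and using the lower estimate (\ref{sregl}) on $R_S$ to absorb the stray $\log H$ and $\log s$ into $(2s)^{O(s)}$, gives the bound stated in the lemma. The main obstacle is this last bookkeeping step: one must carefully invoke the exact shape of Yu's $p$-adic estimate so that the residue characteristic $p$ gets replaced by the uniform quantity $P/\log P$, and verify that the outer $\log^*$ collapses to $\max\{\log P,\log^* R_S\}$ rather than to a larger expression.
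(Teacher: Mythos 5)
First, note that the paper does not actually prove this lemma: it is quoted as a less precise version of Theorem 1 in \citep{GY}. Your sketch follows the skeleton of that Gy\H{o}ry--Yu argument (fundamental $S$-units from Lemma \ref{lSunit}, a distinguished place $v_0$, lower bounds from Lemma \ref{lMatveev} and Lemma \ref{lYu}), which is the right strategy, but as written the final assembly does not produce the stated bound, and the step meant to repair it is incorrect.

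Concretely: you bound $\max(h(x),h(y))\le(2s)^{O(s)}R_S\,B$ and, separately, $B\le(2s)^{O(s)}(P/\log P)HR_S\log^*\bigl((2s)^{O(s)}(P/\log P)HR_S\bigr)$. Multiplying these gives $(2s)^{O(s)}(P/\log P)H R_S^2$ times a logarithmic factor that contains $\log^* H$, whereas the lemma allows only a single factor $R_S$ and the factor $\max\{\log P,\log^* R_S\}$, with no dependence on $H$ beyond the linear one. Your proposed fix --- absorbing ``the stray $\log H$'' into $(2s)^{O(s)}$ via the lower estimate (\ref{sregl}) --- cannot work: (\ref{sregl}) bounds $R_S$ below by a quantity of the order $0.2052(\log 2)^t$ and says nothing about $H$, which is independent of $s$, $P$, $R_S$ and may be arbitrarily large. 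Two changes are needed, and they are the real content of the Gy\H{o}ry--Yu proof rather than bookkeeping: (a) do not convert heights to exponents and back, since that double conversion is what costs the second $R_S$; instead arrange $h(y)=\max(h(x),h(y))$, choose $v_0$ with $|y|_{v_0}$ maximal, and play $d\,h(y)\le s\log|y|_{v_0}$ directly against the upper bound $\log|\Lambda|_{v_0}\le dH-\log|y|_{v_0}$; (b) to eliminate $\log^* H$ and obtain precisely $\max\{\log P,\log^* R_S\}$ you must use the normalized quantity $B'\ge\max_i |b_i|A_i'/A_n'$ in Lemma \ref{lMatveev}, placing the unknown of largest height in the distinguished $n$-th slot, and make the corresponding choice of $\delta$ in the term $\delta B/(B_n a_4)$ of Lemma \ref{lYu}. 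You flag exactly this point as ``the main obstacle'' in your last sentence, but without carrying it out the argument establishes only a strictly weaker estimate than the one claimed.
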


\begin{proof}
This is a less precise version of Theorem 1 in \citep{GY}.
\end{proof}

\subsection{Linear forms in logarithms}
Let $K$ be an algebraic number field of degree $d$, and assume that it is embedded in $\mathbb{C}$. We put $\chi = 1$ if $K$ is real, and $\chi = 2$ otherwise. Let
\[
\Sigma = b_1 \log \alpha_1 + \cdots + b_n \log \alpha_n
\]
where $\alpha_1, \ldots, \alpha_n$ are $n (\geq 2)$ non-zero elements of $K$ with some fixed non-zero values of $\log \alpha_1, \ldots, \log \alpha_n$, and $b_1, \ldots, b_n$ are rational integers, not all zero. We put
\[
A_i \geq \max\{dh(\alpha_i), |\log \alpha_i|, 0.16\}, i = 1, \ldots, n
\]
and
\[
B \geq \max\{1, \max\{|b_i|A_i/A_n : 1 \leq i \leq n\}\}.
\]

\begin{theorem}
\label{tMatveev}
Suppose that $\Sigma \neq 0$. Then
\begin{align*}
\log |\Sigma| > -a_1(n, d)A_1 \cdots A_n \log(eB),
\end{align*}
where
\[
a_1(n, d) = \min\left\{\frac{1}{\chi} \left(\frac{1}{2} en\right)^\chi 30^{n + 3} n^{3.5}, 2^{6n + 20} \right\} d^2 \log(ed).
\]
Further, $B$ may be replaced by $\max(|b_1|, \ldots, |b_n|)$.
\end{theorem}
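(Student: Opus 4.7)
This statement is Matveev's celebrated refinement of Baker's theorem on linear forms in logarithms, and a genuine proof would occupy many pages of intricate transcendence theory. In the present paper one would simply cite Matveev's original article and treat the theorem as a black box; nevertheless, the broad strategy follows the classical Baker--Gelfond paradigm, which I sketch in outline below.

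The plan would be to assume for contradiction that $0 < |\Sigma| < \exp(-C \, A_1 \cdots A_n \log(eB))$ for a well-chosen constant $C = C(n,d)$, and build an auxiliary polynomial $P(X_1,\ldots,X_n) \in \mathbb{Z}[X_1,\ldots,X_n]$ of carefully tuned partial degrees and logarithmic height. One then studies the entire function
\[
F(z) = P\bigl(e^{z \log \alpha_1}, \ldots, e^{z \log \alpha_n}\bigr),
\]
or a suitable multivariate analogue. By Siegel's lemma, the coefficients of $P$ can be chosen so that $F$ and many of its derivatives vanish at a prescribed set of integer points; the key is that the number of unknowns exceeds the number of linear conditions thanks to the smallness of the $A_i$.

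The next step is an extrapolation argument: if $|\Sigma|$ were as small as assumed, then $F$ would automatically vanish to high order at many further points, going well beyond what the construction guarantees. A zero estimate on a commutative algebraic group (for example Philippon's multiplicity estimate, or its predecessors due to Masser and W\"ustholz) then forces a non-trivial polynomial relation among the $\alpha_i$ that contradicts $\Sigma \neq 0$. What distinguishes Matveev's version from the earlier bounds of Baker, Feldman, W\"ustholz, and Waldschmidt is a delicate \emph{Kummer descent} step, which replaces a factor of order $n!$ by one of order $C^n$ and is responsible for the shape of $a_1(n,d)$ displayed above.

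The hard part, and the reason no short proof exists, is the simultaneous tuning of the auxiliary parameters: the degrees of $P$ in each variable, the vanishing orders, the size of the interpolation set, and the Kummer descent must all be balanced against the zero estimate and against the factors $A_i \log(eB)$. Matveev's contribution is precisely this optimization, and reproducing it here would be disproportionate; consequently I would, as does the literature on $S$-unit and Catalan-type equations, invoke Theorem~\ref{tMatveev} as a quoted input and proceed to the applications in the next sections.
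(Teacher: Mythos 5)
Your proposal matches the paper exactly: the paper proves this theorem simply by citing Corollary 2.3 of Matveev's article, which is precisely what you say one would do. Your accompanying sketch of the Baker--Matveev method is a reasonable summary but is not needed, since the result is invoked as a quoted input.
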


\begin{proof}
This is Corollary 2.3 of \citep{Matveev}. 
\end{proof}

Put
\begin{align}
\label{lambdadef}
\Lambda = \alpha_1^{b_1} \cdots \alpha_n^{b_n} - 1
\end{align}
and
\[
A_i' = dh(\alpha_i) + \pi, \quad i = 1, \ldots, n.
\]

\begin{lemma}
\label{lMatveev}
Suppose that $\Lambda \neq 0$, and that $B'$ satisfies
\[
B' \geq \max\{1, \max\{|b_i|A_i'/A_n' : 1 \leq i \leq n\}\}.
\]
Then we have
\[
\log |\Lambda| > -a_2(n, d) A_1' \cdots A_n' \log\left(e (n + 1) B'\right),
\]
where
\[
a_2(n, d) = 2\pi \min\left\{\frac{1}{\chi} \left(\frac{1}{2} e(n + 1)\right)^{\chi} 30^{n + 4} (n + 1)^{3.5}, 2^{6n + 26}\right\} d^2 \log(ed).
\]
Again $B$ may be replaced by $\max(|b_1|, \ldots, |b_n|)$.
\end{lemma}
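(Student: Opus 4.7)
The plan is to reduce Lemma \ref{lMatveev} to Theorem \ref{tMatveev} by converting the multiplicative form $\Lambda$ into an additive linear form in logarithms, at the cost of introducing one extra variable $\alpha_{n+1} = -1$. We may assume $|\Lambda| \leq 1/2$, since otherwise $\log|\Lambda| \geq -\log 2$ and the claimed bound is trivially satisfied. For $i = 1, \ldots, n$ let $\log\alpha_i$ denote the principal value, so that $|\log \alpha_i| \leq \pi + |\log|\alpha_i||$. The standard inequality $|\log|\sigma(\alpha_i)|| \leq dh(\alpha_i)$ (which follows from writing the height as a sum of local contributions and using the product formula) then gives $|\log\alpha_i| \leq dh(\alpha_i) + \pi = A_i'$.

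Next I choose the unique integer $m$ such that
\[
\Sigma \ :=\ b_1 \log\alpha_1 + \cdots + b_n \log\alpha_n + 2m \cdot (i\pi)
\]
has imaginary part in $(-\pi, \pi]$. Then $e^\Sigma = \alpha_1^{b_1}\cdots \alpha_n^{b_n} = 1 + \Lambda$; and because $|\Lambda| \leq 1/2$ forces $1 + \Lambda$ into the open right half-plane, this $\Sigma$ is in fact the principal branch of $\log(1+\Lambda)$, yielding $|\Sigma| \leq 2|\Lambda|$. Taking imaginary parts in the definition of $\Sigma$ and using $|b_i| \leq A_n' B'/A_i' \leq A_n' B'/\pi$ gives
\[
|2m|\ \leq\ \frac{1}{\pi}\Bigl(\pi + \sum_{i=1}^n |b_i|\, |\mathrm{Im}\log\alpha_i|\Bigr)\ \leq\ 1 + \sum_{i=1}^n |b_i|\ \leq\ (n+1)\frac{A_n' B'}{\pi}.
\]

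Now I apply Theorem \ref{tMatveev} to $\Sigma$ as a linear form in the $n+1$ logarithms of $\alpha_1, \ldots, \alpha_{n-1}, -1, \alpha_n$ (in this order, so that $\alpha_n$ plays the role of the reference variable), with $A_i = A_i'$ for $i \leq n$ and $A_{n+1}'{}^{\text{Matveev}} = \pi$ for $\alpha = -1$. The hypotheses are satisfied: $A_i' \geq \max\{dh(\alpha_i), |\log\alpha_i|, 0.16\}$ by the bound above, and the Matveev parameter $B$ can be taken as
\[
\max\Bigl\{1,\ \max_{1 \leq i \leq n-1} \frac{|b_i|A_i'}{A_n'},\ \frac{|2m|\pi}{A_n'},\ |b_n|\Bigr\}\ \leq\ (n+1)B',
\]
using the bound on $|2m|$ above and $A_n' \geq \pi$. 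Matveev's theorem, combined with the fact that the relevant $A_1 \cdots A_{n+1}$ product equals $\pi \cdot A_1' \cdots A_n'$, yields
\[
\log|\Sigma|\ >\ -\pi\, a_1(n+1, d)\, A_1' \cdots A_n'\, \log\bigl(e(n+1)B'\bigr).
\]

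Finally, from $|\Sigma| \leq 2|\Lambda|$ we get $\log|\Lambda| \geq \log|\Sigma| - \log 2$, and the extra additive $\log 2$ is easily absorbed by doubling the constant, which is exactly what replacing $\pi$ by $2\pi$ in the statement of $a_2(n,d)$ accomplishes. This yields the claimed inequality. The main technical obstacle is bookkeeping: verifying that the principal-branch choice is consistent with $|\Sigma| \leq 2|\Lambda|$ (which requires the reduction $|\Lambda| \leq 1/2$), and controlling the auxiliary integer $m$ tightly enough that $B$ grows only by a factor of $n+1$ over $B'$, so that the logarithmic factor matches $\log(e(n+1)B')$ exactly.
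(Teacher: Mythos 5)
Your proposal is correct and follows essentially the same route as the paper's proof: both pass from $\Lambda$ to the linear form $\Sigma$ via $|\Sigma| \leq 2|\Lambda|$ (after reducing to $|\Lambda| \leq 1/2$), introduce $-1$ as an extra logarithm with parameter $\pi$ to account for the $2k\pi i$ term, bound the auxiliary integer so that $B = (n+1)B'$ is admissible in Theorem \ref{tMatveev}, and absorb the lost factor $\frac{1}{2}$ into the $2\pi$ appearing in $a_2(n,d)$. The only differences are cosmetic (placement of $-1$ in the tuple and how $|2k|$ is estimated), so nothing further is needed.
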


\begin{proof}
We use the principal value of the logarithm. Let $z$ be a complex number such that $|z - 1| < \frac{1}{2}$. Then
\[
|\log(z)| = \left|\sum_{n = 1}^\infty (-1)^{n - 1} (z - 1)^n\right| \leq |z - 1| \sum_{n = 1}^\infty |z - 1|^{n - 1} = |z - 1| \frac{1}{1 - |z - 1|} < 2|z - 1|.
\]
Hence
\[
|z - 1| > \frac{1}{2}|\log(z)|.
\]
We apply this with $z = \alpha_1^{b_1} \cdots \alpha_n^{b_n}$. Because we want to give a lower bound for $\left|z - 1\right|$, we may assume that $\left|z - 1\right| < \frac{1}{2}$. This gives
\[
\left|z - 1\right| > \frac{1}{2} \left|\log \left(\alpha_1^{b_1} \cdots \alpha_n^{b_n} \right) \right| = \frac{1}{2} \left|b_1 \log(\alpha_1) + \cdots + b_n \log(\alpha_n) + 2k\pi i \right|
\]
for some $k \in \Z$. But
\[
\left|z - 1\right| < \frac{1}{2},
\]
so taking imaginary parts
\[
|k| \leq \frac{1}{2\pi} \left(1 + |b_1|\pi + \cdots + |b_n|\pi \right) \leq \frac{n + 1}{2} \max\{|b_1|, \ldots, |b_n|\}.
\]
Put
\[
\Sigma = b_1 \log(\alpha_1) + \cdots + b_n \log(\alpha_n) + 2k\pi i = b_1 \log(\alpha_1) + \cdots + b_n \log(\alpha_n) + 2k\log(-1).
\]
We apply Theorem \ref{tMatveev} with $n + 1$, $(-1, \alpha_1, \ldots, \alpha_n)$ and $(2k, b_1, \ldots, b_n)$. Then
\[
|\log \alpha_i| \leq \log |\alpha_i| + \pi \leq dh(\alpha_i) + \pi.
\]
So we can take $A_1 = \pi$, $A_i = A_i'$ for $i = 2, \ldots, n + 1$ and $B = (n + 1)B'$. Our assumption $\Lambda \neq 0$ implies $\Sigma \neq 0$. Theorem \ref{tMatveev} gives
\begin{align*}
\left|z - 1\right| &> \frac{1}{2} \left|b_1 \log(\alpha_1) + \cdots + b_n \log(\alpha_n) + 2k\pi i \right| \\
&> \frac{1}{2} \exp(-a_1(n + 1, d) A_1 \cdots A_{n + 1} \log(e (n + 1)B'))
\end{align*}
where
\[
a_1(n, d) = \min\left\{\frac{1}{\chi} \left(\frac{1}{2} en\right)^{\chi} 30^{n + 3} n^{3.5}, 2^{6n + 20}\right\} d^2 \log(ed).
\]
This implies
\[
\log \left|z - 1\right| > -a_2(n, d) A_1' \cdots A_n' \log\left(e (n + 1) B'\right)
\]
where
\[
a_2(n, d) = 2\pi \min\left\{\frac{1}{\chi} \left(\frac{1}{2} e(n + 1)\right)^{\chi} 30^{n + 4} (n + 1)^{3.5}, 2^{6n + 26}\right\} d^2 \log(ed)
\]
as desired.
\end{proof}

Keep the above notation and assumptions and consider again $\Lambda$ as defined by (\ref{lambdadef}). Let now $B$ and $B_n$ be real numbers satisfying
\[
B \geq \max\{|b_1|, \ldots, |b_n|\}, B \geq B_n \geq |b_n|.
\]
Let $\mathfrak{p}$ be a prime ideal of $\mathcal{O}_K$ and denote by $e_\mathfrak{p}$ and $f_\mathfrak{p}$ the ramification index and the residue class degree of $\mathfrak{p}$, respectively. Suppose that $\mathfrak{p}$ lies above the rational prime number $p$. Then $N(\mathfrak{p}) = p^{f_\mathfrak{p}}$.

\begin{lemma}
\label{lYu}
Assume that $\text{ord}_p b_n \leq \text{ord}_p b_i$ for $i = 1, \ldots, n$ and set
\[
h'_i := \max\{h(\alpha_i), 1/16e^2d^2\}, \quad i = 1, \ldots, n.
\]
If $\Lambda \neq 0$, then for any real $\delta$ with $0 < \delta \leq 1/2$ we have
\[
\text{ord}_\mathfrak{p} \Lambda < a_3(n, d) \frac{e^n_\mathfrak{p} N(\mathfrak{p})}{(\log N(\mathfrak{p}))^2} \max \left\{h'_1 \cdots h'_n \log(M\delta^{-1}), \frac{\delta B}{B_n a_4(n, d)}\right\},
\]
where
\begin{align*}
a_3(n, d) = (16ed)^{2(n + 1)} n^{3/2} \log(2nd) \log(2d), \\
a_4(n, d) = (2d)^{2n + 1} \log(2d) \log^3(3d),
\end{align*}
and
\[
M = B_n a_5(n, d) N(\mathfrak{p})^{n + 1} h'_1 \cdots h'_{n - 1}
\]
with
\[
a_5(n, d) = 2e^{(n + 1)(6n + 5)} d^{3n} \log(2d).
\]
\end{lemma}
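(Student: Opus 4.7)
The plan is to cite Yu's main theorem on $p$-adic linear forms in logarithms directly, rather than attempt a proof from first principles. The bound stated here has precisely the shape of Yu's estimate: the leading constant $a_3(n,d)$ grows essentially like $(16ed)^{2(n+1)}$ (the product over $i = 1, \ldots, n$ of a term of size $(16ed)^2$, with additional logarithmic factors in $d$), the factor $e_\mathfrak{p}^n N(\mathfrak{p})/(\log N(\mathfrak{p}))^2$ is the standard $p$-adic analogue of the archimedean logarithmic correction that appears in Matveev-type bounds, and the occurrence of a maximum of $h_1' \cdots h_n' \log(M\delta^{-1})$ against a term $\delta B/(B_n a_4(n,d))$ is the familiar Yu-style device that allows one to absorb the dependence on the $b_i$ at the cost of the free parameter $\delta$.

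Concretely, I would locate the appropriate formulation of Yu's theorem (the version that produces exactly this kind of $\delta$-dependent trade-off), and then verify term-by-term that its constants can be reorganized into the cleaner expressions $a_3, a_4, a_5$ stated above. The key points to check are: that the heights $h_i'$ in the reference coincide with our $h_i' := \max\{h(\alpha_i), 1/16e^2 d^2\}$ up to the technical cutoff, that the dependence on $e_\mathfrak{p}$ and $N(\mathfrak{p})$ reads off correctly (with the exponent $n$ on $e_\mathfrak{p}$ reflecting the $n$-fold product structure), and that the quantity $M = B_n a_5(n,d) N(\mathfrak{p})^{n+1} h_1' \cdots h_{n-1}'$ matches the parameter choice arising in Yu's construction of the auxiliary function.

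The hard part is not any new mathematical content but the bookkeeping involved in matching the constants: Yu's original theorem carries a large number of parameters, and distilling its conclusion into the compact form $a_3(n,d) = (16ed)^{2(n+1)} n^{3/2} \log(2nd) \log(2d)$ and $a_5(n,d) = 2 e^{(n+1)(6n+5)} d^{3n} \log(2d)$ requires combining several terms and deliberately overestimating where convenient. Attempting to prove the statement from scratch would require redoing Yu's entire interpolation-theoretic machinery (construction of an auxiliary function in several $p$-adic variables, extrapolation via Schnirelmann integrals, and a multivariate zero estimate); this is well beyond the scope of a citation-style lemma in a preliminaries section, which is why I would simply invoke Yu's result.
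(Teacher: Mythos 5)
Your proposal matches the paper exactly: the paper proves this lemma by citing it as the second consequence of the Main Theorem in Yu's work, with no independent derivation. Your plan to invoke Yu's result and check that the constants and parameters (the $h_i'$ cutoff, the $e_\mathfrak{p}^n N(\mathfrak{p})/(\log N(\mathfrak{p}))^2$ factor, and $M$) match the stated $a_3, a_4, a_5$ is precisely the intended justification.
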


\begin{proof}
This is the second consequence of the Main Theorem in \citep{Yu}.
\end{proof}

\subsection{The super- and hyperelliptic equation}
Let
\[
f(X) = a_0X^n + a_1X^{n - 1} + \cdots + a_0 \in \mathcal{O}_S[X]
\]
be a polynomial of degree $n \geq 2$ without multiple roots and let $b$ be a non-zero element of $\mathcal{O}_S$. Put
\[
\hat{h} := \frac{1}{d} \sum_{v \in M_K} \log \max(1, |b|_v, |a_0|_v, \ldots, |a_n|_v).
\]
Our next lemma concerns the superelliptic equation
\begin{align}
\label{superelliptic}
f(x) = by^m
\end{align}
in $x, y \in \mathcal{O}_S$ with a fixed exponent $m \geq 3$.

\begin{lemma}
\label{lsuper}
Assume that $m \geq 3$, $n \geq 2$. If $x, y \in \mathcal{O}_S$ is a solution to equation (\ref{superelliptic}) then we have
\[
h(x), h(y) \leq (6ns)^{14m^3n^3s} |D_K|^{2m^2n^2} Q^{3m^2n^2} e^{8m^2n^3d\hat{h}}.
\]
\end{lemma}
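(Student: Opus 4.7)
The plan is to reduce the superelliptic equation to a combination of $S$-unit equations and linear forms in logarithm estimates, via factorization of $f$ over its splitting field. First I would pass to the splitting field $L$ of $f$ over $K$, so that $[L:K] \le n!$ and $f(X) = a_0 \prod_{i=1}^n (X - \alpha_i)$ in $L[X]$ with $\alpha_1, \ldots, \alpha_n$ pairwise distinct; each $a_0 \alpha_i$ is integral over $\mathcal{O}_S$. Let $T$ be the set of places of $L$ consisting of those lying above a place of $S$ together with every place at which one of $a_0$, $b$ or $a_0(\alpha_i - \alpha_j)$ (for $i \ne j$) fails to be a unit. Tedious but routine bookkeeping shows that $|T|$, the class number $h_L$, the regulator $R_L$, and the analogues of $P$, $Q$, $|D_K|$ for $L$ and $T$ are all effectively bounded in terms of $n$, $s$, $P$, $Q$, $d$, $|D_K|$ and $\hat h$.

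Given a solution $(x, y) \in \mathcal{O}_S \times \mathcal{O}_S$, I would factor the equation as
\[
\prod_{i=1}^n (a_0 x - a_0 \alpha_i) = a_0^{n-1} b y^m
\]
in $\mathcal{O}_T$. For any place $v \notin T$ the elements $a_0$, $b$ and $a_0(\alpha_i - \alpha_j)$ are $v$-units, so at most one factor on the left can have positive $v$-valuation, and that valuation must be a multiple of $m$. Consequently the fractional ideal $(a_0 x - a_0 \alpha_i)\mathcal{O}_T$ is the $m$-th power of an integral ideal $\mathfrak{b}_i$ of $\mathcal{O}_T$, and finiteness of the $T$-class group yields a factorization
\[
a_0 x - a_0 \alpha_i = \varepsilon_i \delta_i z_i^m
\]
with $\varepsilon_i \in \mathcal{O}_T^{\ast}$, $\delta_i$ drawn from a finite set of effectively bounded height, and $z_i \in L$. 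Applying Lemma \ref{lhsmall} in $L$ (with $T$ replacing $S$), I can normalise each $\varepsilon_i$ so that $h(\varepsilon_i)$ is polynomially bounded in $R_L$ and $\log Q_T$.

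When $n \ge 3$ I would then fix three distinct roots $\alpha_1, \alpha_2, \alpha_3$ and combine the factorizations above with Siegel's identity
\[
(\alpha_3 - \alpha_2)(a_0 x - a_0 \alpha_1) + (\alpha_1 - \alpha_3)(a_0 x - a_0 \alpha_2) + (\alpha_2 - \alpha_1)(a_0 x - a_0 \alpha_3) = 0.
\]
After dividing through by the third term, the identity involves the $T$-units $\varepsilon_i/\varepsilon_3$ together with the ratios $(z_i/z_3)^m$, with coefficients of effectively bounded height in $L$. Analysing it place-by-place, using Lemma \ref{lSuniteq} to control the archimedean valuations and the $p$-adic linear forms estimate Lemma \ref{lYu} at the non-archimedean places of $T$, yields an effective upper bound for $\max_i h(z_i)$ of the required shape. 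From this one recovers $h(a_0 x - a_0 \alpha_i) = h(\varepsilon_i \delta_i z_i^m)$, hence $h(x)$, and finally $h(y)$ via $b y^m = f(x)$. The case $n = 2$ is handled in the same framework using the simpler identity $(a_0 x - a_0 \alpha_1) - (a_0 x - a_0 \alpha_2) = a_0(\alpha_2 - \alpha_1)$, which reduces to a Thue-type equation.

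The chief obstacle is not conceptual but combinatorial: keeping track of how every parameter evolves under the passage $(K, S) \leadsto (L, T)$. The degree can multiply by $n!$, $|T|$ can grow by $O(n^2)$ from the pairwise differences of roots, and one has to bound $|D_L|$, $h_L$, $R_L$, $P_T$ and $Q_T$ in terms of the original data together with $\hat h$. Keeping all these dependences sharp enough to combine cleanly with the $m$-th-power structure — which inserts factors of $m$ into every height bound via $h(z_i^m) = m\, h(z_i)$ and via the explicit $m$-dependence of Lemmas \ref{lSuniteq} and \ref{lYu} — is precisely what produces the final bound of the shape $(6ns)^{14 m^3 n^3 s}|D_K|^{2 m^2 n^2} Q^{3 m^2 n^2} e^{8 m^2 n^3 d \hat h}$.
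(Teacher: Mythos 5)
You should first be aware that the paper contains no proof of Lemma \ref{lsuper} at all: it is imported verbatim as Theorem 2.1 of \citep{online}, so there is no internal argument to measure your sketch against. Your outline is in the general spirit of the method used in that reference (pass to an extension where $f$ splits, factor the equation ideal-theoretically, write $a_0x-a_0\alpha_i=\varepsilon_i\delta_i z_i^m$, reduce to unit-equation/Baker-type estimates), but as a proof it has a genuine gap at its central analytic step.

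The gap is in the step where you divide Siegel's identity by its third term and then invoke Lemma \ref{lSuniteq} and Lemma \ref{lYu}. After substitution, the unknowns in that identity are of the form (unit in $\mathcal{O}_T^\ast$) times $(z_i/z_3)^m$, and $z_i/z_3$ is \emph{not} a $T$-unit: the $z_i$ carry exactly the primes of the varying ideals $\mathfrak{b}_i$, which depend on the unknown $x$. So Lemma \ref{lSuniteq} does not apply to this equation, and Lemma \ref{lYu} is not a substitute, since it presupposes height control on the numbers being raised to powers, which here are the unknown $z_i$ themselves. The classical (and the cited) treatment of the superelliptic case $m\ge 3$ circumvents this: one uses only \emph{two} roots, obtains the binomial equation $\eta_1\xi_1^m-\eta_2\xi_2^m=a_0(\alpha_2-\alpha_1)$, passes to the extension $L(\zeta_m,(\eta_2/\eta_1)^{1/m})$, factors the left side into the linear forms $\xi_1-\zeta_m^j\theta\xi_2$, and applies Siegel's identity to three of \emph{those} factors, whose pairwise ratios have bounded ideal greatest common divisors and hence, after normalization, are legitimate unknowns for an $S$-unit equation. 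This is also where the $m$-dependence of the bound genuinely enters (the degree of the auxiliary field grows with $m$), not through the bookkeeping $h(z_i^m)=m\,h(z_i)$ you describe. Relatedly, your $n=2$ case stops at ``a Thue-type equation'': an explicit effective bound for binomial Thue equations over $\mathcal{O}_T$ with controlled dependence on $m$ is precisely the nontrivial input needed there, and no lemma in the paper supplies it. Finally, the specific constants $14m^3n^3s$, $2m^2n^2$, $3m^2n^2$, $8m^2n^3d\hat h$ are the substance of the cited theorem; they cannot be recovered from ``tedious but routine bookkeeping'' at the level of detail given.
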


\begin{proof}
See Theorem 2.1 in \citep{online}.
\end{proof}

We now consider the hyperelliptic equation
\begin{align}
\label{hyperelliptic}
f(x) = by^2
\end{align}
in $x, y \in \mathcal{O}_S$.

\begin{lemma}
\label{lhyper}
Assume that $n \geq 3$. If $x, y \in \mathcal{O}_S$ is a solution to equation (\ref{hyperelliptic}) then we have
\[
h(x), h(y) \leq (4ns)^{212n^4s} |D_K|^{8n^3} Q^{20n^3} e^{50n^4d\hat{h}}.
\]
\end{lemma}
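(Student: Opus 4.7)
The plan is to reduce the hyperelliptic equation (\ref{hyperelliptic}) to an $S$-unit equation via the classical three-root trick and then apply Lemma \ref{lSuniteq}. The argument requires splitting $f$ into linear factors, so the first task is to pass to the splitting field of $f$ while carefully controlling the degree, discriminant, class number, $S$-regulator, and the finite set of places.

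\textbf{Step 1 (setup).} Let $L$ be the splitting field of $f$ over $K$; then $[L:K]\le n!$ and $[L:\Q]\le n!\,d$. The conductor--discriminant formula, combined with the estimate $h(\mathrm{disc}(f)) = O(n\hat h)$, yields an effective bound for $|D_L|$. Enlarge $S$ to the set $T$ of places of $L$ above $S$ together with those above $a_0 b \cdot \mathrm{disc}(f)$ (and, if necessary, enough additional places to kill the class group of $\mathcal{O}_{L,T}$). Then $|T|$ is $O(n!\,n\,s)$ and $\log Q_T$ is controlled by $\log Q$, $n$, $\hat h$, while (\ref{sregl}) and Lemma \ref{lSunit} control $R_T$.

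\textbf{Step 2 (factorisation).} Over $\mathcal{O}_{L,T}$ we have
\[
a_0^{n-1} b y^2 = \prod_{i=1}^n (a_0 x - a_0 \alpha_i),
\]
and outside $T$ the factors on the right are pairwise coprime, because their pairwise gcds divide $a_0(\alpha_i - \alpha_j)$ whose contributing primes have been absorbed into $T$. Consequently each ideal $(a_0 x - a_0 \alpha_i)$ is a square in $\mathcal{O}_{L,T}$. Since $\mathcal{O}_{L,T}^*/(\mathcal{O}_{L,T}^*)^2$ is finite of order $2^{|T|}$ and every square ideal is the square of a principal ideal, we may write $x - \alpha_i = \gamma_i u_i^2$ with $u_i \in L^*$ and $\gamma_i$ drawn from a fixed system of representatives whose heights are bounded through Lemmas \ref{lSunit} and \ref{lhsmall}.

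\textbf{Step 3 ($S$-unit equation and conclusion).} Because $n \geq 3$, we may pick three distinct roots $\alpha_1, \alpha_2, \alpha_3$. The polynomial identity
\[
(x-\alpha_1)(\alpha_2-\alpha_3) + (x-\alpha_2)(\alpha_3-\alpha_1) = (x-\alpha_3)(\alpha_2-\alpha_1),
\]
after substituting $x - \alpha_i = \gamma_i u_i^2$ and dividing by the right-hand side, becomes
\[
\xi_1 \bigl(u_1/u_3\bigr)^2 + \xi_2 \bigl(u_2/u_3\bigr)^2 = 1
\]
for some $\xi_1, \xi_2 \in L^*$ of controlled height. Enlarging $T$ once more so that $(u_1/u_3)^2$ and $(u_2/u_3)^2$ are $T$-units, Lemma \ref{lSuniteq} bounds their heights, hence the heights of the $u_i$. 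From $x = \alpha_1 + \gamma_1 u_1^2$ one recovers $h(x)$, from $y^2 = f(x)/b$ one recovers $h(y)$, and the identity $h_L = [L:K]\,h_K$ transports the bounds back to $K$.

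\textbf{Main obstacle.} The hard part is not the strategy but the bookkeeping required to land on the precise exponents $(4ns)^{212 n^4 s}$, $|D_K|^{8n^3}$, $Q^{20n^3}$, $e^{50 n^4 d \hat h}$. The $n!$ blow-up inflates $d$ and $s$ and enters the factor $(2|T|)^{O(|T|)}$ coming from Lemma \ref{lSuniteq}, producing the dominant $(4ns)^{O(n^4 s)}$ contribution. Bounding $|D_L|$ in terms of a power of $|D_K|\cdot\mathrm{disc}(f)$, and $R_T$ in terms of $\log Q_T$, is where the $|D_K|^{O(n^3)}$ and $Q^{O(n^3)}$ factors are generated, while the $e^{O(n^4 d\hat h)}$ factor originates from the heights of the representatives $\gamma_i$ and of $\mathrm{disc}(f)$.
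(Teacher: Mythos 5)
Two points, one contextual and one substantive. Contextually: the paper does not prove this lemma at all; it quotes it as Theorem 2.2 of the cited reference, so what you have written is an attempted reproof of that external result. Your overall strategy (factor $f$, use coprimality outside an enlarged set of places to write $x-\alpha_i=\gamma_i u_i^2$, combine three roots via the Siegel identity, and apply Lemma \ref{lSuniteq}) is indeed the standard skeleton behind such theorems, so the plan is qualitatively sound.

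Substantively, however, your Step 1 contains a gap that is fatal for the \emph{stated} bound. You pass to the full splitting field $L$ of $f$, with $[L:K]\le n!$. Then the degree, the number of places in $T$, the regulator $R_T$ and the quantity $Q_T$ all get inflated by a factor of order $n!$, and the $(2|T|)^{O(|T|)}$ factors coming from Lemmas \ref{lSunit} and \ref{lSuniteq} become of size $(n!\,ns)^{O(n!\,ns)}$. This is incompatible with the claimed exponents, which are polynomial in $n$: for moderately large $n$ one has $n!\gg n^4$, so no amount of bookkeeping along your route produces $(4ns)^{212n^4s}|D_K|^{8n^3}Q^{20n^3}e^{50n^4d\hat h}$; your own claim in Step 1 that $|T|=O(n!\,ns)$ already contradicts the ``dominant $(4ns)^{O(n^4s)}$ contribution'' you assert at the end. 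The proofs of results of this type work instead in the subfield $K(\alpha_1,\alpha_2,\alpha_3)$ generated by just three roots (or in the fields $K(\alpha_i,\alpha_j)$ for pairs), whose relative degree is at most $n(n-1)(n-2)\le n^3$; that is precisely where the exponents $n^3$ and $n^4$ come from, and the three-term identity you use only needs those three roots, not the whole splitting field. A second, smaller issue: enlarging $T$ ``to kill the class group of $\mathcal{O}_{L,T}$'' would make $|T|$ depend on the class number of $L$, which is only controlled through $|D_L|^{1/2}$-type estimates and would again destroy the explicit shape; the correct move, compatible with the stated constants, is to keep the set of places fixed and absorb class-group and unit ambiguities via Lemma \ref{lhsmall} and Lemma \ref{lSunit}, as you half indicate. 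Finally, since the whole content of the lemma is the explicit numerical bound, deferring the computation as ``bookkeeping'' leaves the statement unproved even after the above repairs; with the three-root field in place the bookkeeping is genuinely feasible, but it must actually be carried out.
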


\begin{proof}
See Theorem 2.2 in \citep{online}.
\end{proof}

The following lemma is an explicit version of the Schinzel-Tijdeman theorem over the $S$-integers.

\begin{lemma} 
\label{lST}
Assume that (\ref{superelliptic}) has a solution $x, y \in \mathcal{O}_S$ where $y$ is neither 0 nor a root of unity. Then
\[
m \leq (10n^2s)^{40ns} |D_K|^{6n} P^{n^2} e^{11nd\hat{h}}.
\]
\end{lemma}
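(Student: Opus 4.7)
The plan is to follow the classical Schinzel--Tijdeman strategy: factor $f$ over its splitting field, produce a quantity whose magnitude is exponentially small in $m$, and then compare this with a linear-forms-in-logarithms lower bound which is only polylogarithmic in $m$; the resulting clash will force $m$ to be bounded.

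First I would pass to the splitting field $L = K(\alpha_1,\ldots,\alpha_n)$ of $f$, write $f(X) = a_0\prod_{i=1}^n (X-\alpha_i)$, and enlarge $S$ to the set $S_L$ of places of $L$ lying above $S$ together with the finitely many places at which $a_0 b \cdot \mathrm{disc}(f)$ is not an $S_L$-unit. Equation (\ref{superelliptic}) becomes
\[
a_0^{n-1} b\, y^m = \prod_{i=1}^{n}(a_0 x - a_0\alpha_i),
\]
and outside $S_L$ the principal ideals $(a_0 x - a_0\alpha_i)$ are pairwise coprime since $f$ has no multiple roots. Hence each is, outside $S_L$, the $m$-th power of an integral $\mathcal{O}_{S_L}$-ideal; killing the $S_L$-class of this ideal and absorbing the leftover $S_L$-unit via Lemma \ref{lhsmall}, I arrive at a uniform factorisation
\[
a_0 x - a_0\alpha_i = \lambda_i\, \mu_i^m, \qquad i = 1,\ldots, n,
\]
with $\lambda_i,\mu_i \in L^*$ and $h(\lambda_i)$ bounded polynomially in $|D_K|, Q, s, \hat h$ independently of $m$.

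Now I would pick two distinct roots $\alpha_1,\alpha_2$ (available since $n \geq 2$) and subtract:
\[
\lambda_1 \mu_1^m - \lambda_2 \mu_2^m = a_0(\alpha_2 - \alpha_1).
\]
Dividing by $\lambda_2\mu_2^m$ yields
\[
\Lambda := \frac{\lambda_1}{\lambda_2}\left(\frac{\mu_1}{\mu_2}\right)^m - 1 = \frac{a_0(\alpha_2-\alpha_1)}{\lambda_2\,\mu_2^m}.
\]
Since $\prod_i \mu_i^m$ equals $a_0^{n-1}by^m$ up to the bounded-height factors $\lambda_i$, some $h(\mu_i)$ must be of order $h(y)/n$; after relabelling I may take $i=2$, and Lemma \ref{lVoutier} then yields $h(\mu_2) \geq \cst{1}/(nd)$ since $y$ is not a root of unity. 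By the product formula I can select a place $v$ of $L$ at which $\log|\mu_2|_v$ reaches at least $h(\mu_2)/|S_L|$, giving an upper estimate $\log|\Lambda|_v \ll -mh(\mu_2)/|S_L| + O(1)$ that decays linearly in $m$. On the other hand, after unpacking the $S_L$-units hidden in $\lambda_1/\lambda_2$ and $\mu_1/\mu_2$ via the fundamental system supplied by Lemma \ref{lSunit}, Lemma \ref{lMatveev} at archimedean $v$ (or Lemma \ref{lYu} at finite $v$) provides a lower bound of the shape $\log|\Lambda|_v > -C \log(em)$, where $C$ is explicit in the data of the problem. Combining these two inequalities and solving for $m$ produces a bound of the claimed shape, with the factor $P^{n^2}$ arising precisely from Yu's non-archimedean estimate.

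The main obstacle is the numerical bookkeeping. One has to pass from $K$ to $L$ without losing control of $[L:K]$, $|D_L|$ or $|S_L|$ (all controllable in terms of $n!$, $|D_K|$, $Q$, $\hat h$), extract explicit upper bounds for the $h(\lambda_i)$ from the class-group descent and the repeated use of Lemma \ref{lhsmall}, and finally unify the archimedean and $\mathfrak{p}$-adic linear-forms-in-logarithms estimates so that the final numerical constants meet the target $(10n^2 s)^{40ns} |D_K|^{6n} P^{n^2} e^{11nd\hat h}$ exactly rather than a weaker shape.
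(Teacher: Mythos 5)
The paper offers no internal proof of Lemma \ref{lST}: it is quoted directly from Theorem 2.3 of \citep{online}, so the only comparison available is with the standard Schinzel--Tijdeman argument, which is indeed what you outline (splitting field, extraction of $m$-th powers, an upper bound at a well-chosen place decaying linearly in $m$ played against a Baker/Yu lower bound that is only logarithmic in $m$, with $y$ not a root of unity keeping the relevant height bounded away from $0$ via Lemma \ref{lVoutier}). In outline this is the right strategy.

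However, one step fails as you state it: the claim that the ideal-to-element descent produces $a_0x - a_0\alpha_i = \lambda_i\mu_i^m$ with $h(\lambda_i)$ bounded independently of $m$. If $(a_0x-a_0\alpha_i)=\mathfrak{a}_i^m$ as $\mathcal{O}_{S_L}$-ideals and you write $a_0x-a_0\alpha_i=\lambda_i\mu_i^m$ with $\mu_i\in L^\ast$, then $(\lambda_i)=\bigl(\mathfrak{a}_i(\mu_i)^{-1}\bigr)^m$; unless $\mathfrak{a}_i$ is principal this fractional ideal is nontrivial, so some prime outside $S_L$ occurs in $(\lambda_i)$ with exponent at least $m$ in absolute value and hence $h(\lambda_i)\geq m\log 2/[L:\Q]$. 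Thus the ``bounded-height factor'' is not bounded, and in the critical regime $h(x)\asymp m$ this term is of the same size as the gain $-m\,h(\mu_2)[L:\Q]/|S_L|$, so the comparison of upper and lower bounds collapses. You need one of the standard repairs: either observe that $[\mathfrak{a}_i]^m=1$ forces $\mathfrak{a}_i^{g}$ to be principal with $g=\gcd(m,h_L)$, run the argument with the exponent $m'=m/g$ and a generator of controlled height (Lemmas \ref{lSunit} and \ref{lhsmall}), and recover $m\leq h_L\cdot(\text{bound for }m')$; or enlarge $S_L$ by finitely many primes of Minkowski-bounded norm so that the $S_L$-class group becomes trivial before descending. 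In either case the class number enters, which is precisely where the factor $|D_K|^{6n}$ in the stated bound comes from. Beyond this, note that the lemma is an exact numerical statement; your argument, even once the descent is repaired, only yields a bound ``of the claimed shape,'' and matching $(10n^2s)^{40ns}|D_K|^{6n}P^{n^2}e^{11nd\hat{h}}$ requires the explicit bookkeeping (control of $[L:K]\leq n!$, $|D_L|$, $|S_L|$, $R_{S_L}$, and the constants in Lemmas \ref{lMatveev} and \ref{lYu}) that is carried out in \citep{online} and that you acknowledge but do not supply.
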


\begin{proof}
See Theorem 2.3 in \citep{online}.
\end{proof}

\section{Proof of Theorem \ref{tCatalana}}
\label{Brindza}
We will give effective bounds for the solutions of the Catalan equation over the ring of $S$-integers of a number field $K$. Such bounds, in an inexplicit form, were already obtained in \citep{B}. Below we obtain more precise bounds by a less technical argument. Instead of following \citep{B}, we generalize the proof in \citep{BGT} dealing with the Catalan equation for the ordinary ring of integers.

We start with some notation. Let $K$ be a number field of degree $d$ and discriminant $D_K$ and denote by $M_K$ the set of places of $K$. Let $S$ be a finite subset of $M_K$ containing all infinite places. Write $s = |S|$. Let $\mathfrak{p}_1, \ldots, \mathfrak{p}_t$ be the prime ideals in $S$. Put
\[
P := \max\{2, N(\mathfrak{p}_1), \ldots, N(\mathfrak{p}_t)\}
\]
and
\[
Q := N(\mathfrak{p}_1 \cdots \mathfrak{p}_t).
\]
Consider the equation
\begin{align}
\label{catalan}
x^p \pm y^q = 1
\end{align}
in $x, y \in \mathcal{O}_S$, $p, q \in \mathbb{N}$ with $x, y$ not roots of unity and $p, q > 1$, $pq > 4$.

In the course of our proof we use the following simple lemma.

\begin{lemma}
\label{lEstimates2}
Let $a > 0$, $b > 1$, $c > 0$ and $x > 0$. Assume that
\[
\frac{a}{\log b} c^{1/a} > e
\]
and
\[
b^{x/a} > 2\frac{a}{\log b} c^{1/a} \log\left(\frac{a}{\log b} c^{1/a}\right).
\]
Then
\[
\frac{x^a}{b^x} < c^{-1}.
\]
\end{lemma}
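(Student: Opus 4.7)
\medskip

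The plan is to take logarithms and reduce the claim to a one-variable inequality about the function $y - \log y$. The inequality $x^a/b^x < c^{-1}$ is equivalent, after multiplying by $b^x$, taking logarithms and dividing by $a$, to
\[
\log x + \frac{\log c}{a} < \frac{x \log b}{a}.
\]
Setting $y := x \log b / a$ (so $y > 0$ and $b^{x/a} = e^y$) and $u := \frac{a}{\log b} c^{1/a}$, the substitution $x = y a / \log b$ turns the target inequality into
\[
\log y + \log u < y, \qquad \text{i.e.,} \qquad u y < e^y.
\]
The two hypotheses translate cleanly in these variables: the first says $u > e$, and the second says $e^y > 2 u \log u$.

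So I reduce the lemma to the following one-variable statement: if $u > e$ and $e^y > 2 u \log u$, then $e^y > u y$. Writing $L := \log u$, I have $L > 1$, and the hypothesis yields
\[
y > \log 2 + L + \log L.
\]
The desired conclusion $e^y > u y$ is equivalent to $y - \log y > L$. Since $g(y) := y - \log y$ is strictly increasing for $y > 1$, and the lower bound on $y$ already exceeds $1$, it suffices to check that
\[
g(\log 2 + L + \log L) \geq L,
\]
which, after rearrangement, is $L - \log L \geq \log 2$. This last inequality holds for all $L \geq 1$, as can be verified by noting that $L \mapsto L - \log L$ is increasing on $[1, \infty)$ and equals $1 > \log 2$ at $L = 1$.

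The whole argument is essentially a change of variables followed by an elementary monotonicity check, so there is no real obstacle; the only thing to be careful about is bookkeeping strictness of inequalities (the hypothesis $b^{x/a} > 2 u \log u$ is strict, so after applying the monotone function $g$ the conclusion $y - \log y > L$ is strict as well, yielding the required strict inequality $x^a / b^x < c^{-1}$).
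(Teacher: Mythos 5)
Your proof is correct and follows essentially the same route as the paper's: after the change of variables $z = b^{x/a}$ (your $e^{y}$) and $u = \frac{a}{\log b}c^{1/a}$, both arguments reduce the lemma to the one-variable fact that $u > e$ together with $z > 2u\log u$ forces $z > u\log z$, i.e.\ $z/\log z > u$. If anything, your monotonicity check of $L \mapsto L - \log L$ is more careful than the paper's, which appeals to the auxiliary claim that $A > e$ and $z > A\log A$ already give $z/\log z > A$ --- that claim actually needs the factor $2$ supplied by the hypothesis, which is exactly what your computation uses.
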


\begin{proof}
Take $z := b^{x/a} = e^{x \log b/a}$. Then
\begin{align*}
\frac{x^a}{b^x} < c^{-1} &\Leftrightarrow \frac{x}{z} < c^{-1/a} \Leftrightarrow \frac{\log z}{z} < \frac{\log b}{a} c^{-1/a} \\
&\Leftrightarrow \frac{z}{\log z} > \frac{a}{\log b} c^{1/a}.
\end{align*}
In general, if $A$, $z$ are reals with $A > e$, $z > A \log A$, then $\frac{z}{\log z} > A$. Applying this with $A := \frac{a}{\log b} c^{1/a}$ the lemma follows.
\end{proof}

\subsection{A key theorem}
Before proving Theorem \ref{tCatalana}, we generalize Lemma 6 in \citep{BGT}. The proof is a more modern and simplified version of Theorem 9.3 in \citep{ST}. Consider the equation
\begin{align}
\label{suniteq}
x_1 + x_2 = y^q
\end{align}
in $x_1, x_2 \in \mathcal{O}_S^\ast$, $y \in \mathcal{O}_S$ not zero and not an $S$-unit, and prime numbers $q$.

\begin{theorem}
\label{t8}
Equation (\ref{suniteq}) implies that
\[
q \leq (2s)^{O(s)} P^2 R_S^4.
\] 
\end{theorem}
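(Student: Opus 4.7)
My plan is to exploit the fact that $y$ is not an $S$-unit, forcing $1+x_2/x_1$ to acquire large $\mathfrak{p}$-adic valuation at some prime $\mathfrak{p}\notin S$ dividing $y$, and then match this from above against Yu's $\mathfrak{p}$-adic estimate for linear forms in logarithms (Lemma \ref{lYu}), after reducing the $S$-unit exponents in $u:=x_2/x_1$ modulo $q$.

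First, choose any $\mathfrak{p}\notin S$ with $e_0:=\text{ord}_\mathfrak{p}(y)\ge 1$; such $\mathfrak{p}$ exists by hypothesis. Dividing $x_1+x_2=y^q$ by $x_1$ and setting $u:=x_2/x_1\in\mathcal{O}_S^\ast$, we obtain $1+u=y^q/x_1$, and since $\text{ord}_\mathfrak{p}(x_1)=0$ this forces
\[
\text{ord}_\mathfrak{p}(1+u)\;=\;q\,e_0\;\ge\;q,
\]
in particular $1+u\ne 0$. Fix a fundamental system $\{\eta_1,\ldots,\eta_{s-1}\}$ of $S$-units as in Lemma \ref{lSunit}, so $h(\eta_i)\le (2s)^{O(s)}R_S$, and write $u=\zeta\prod\eta_i^{b_i}$ with $\zeta\in W_K$. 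Performing Euclidean division $b_i=qc_i+r_i$ with $0\le r_i<q$ decomposes $u=u_0\delta^q$ where $u_0:=\zeta\prod\eta_i^{r_i}$ has exponents $<q$ and height $h(u_0)\le (2s)^{O(s)}qR_S$, and $\delta:=\prod\eta_i^{c_i}\in\mathcal{O}_S^\ast$.

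Next, apply Lemma \ref{lYu} to the linear form $\Lambda:=-\zeta\,\eta_1^{r_1}\cdots\eta_{s-1}^{r_{s-1}}\delta^q-1=-(1+u)$ at the prime $\mathfrak{p}$, with the $n=s+1$ variables $(-\zeta,\eta_1,\ldots,\eta_{s-1},\delta)$ and exponents $(1,r_1,\ldots,r_{s-1},q)$. Designating $\delta$ (with exponent $q$) as the pivot so that $B_n=q$, and noting that $B\le q$, the second member $\delta B/(B_n a_4(n,d))$ of Yu's maximum is bounded by a constant, so the first member governs the estimate.

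The main technical obstacle is twofold: the prefactor $N(\mathfrak{p})/(\log N(\mathfrak{p}))^2$ in Yu's bound involves the norm of a prime \emph{outside} $S$ and is therefore not a priori controlled by $P$, and the height $h(\delta)$ is a priori unbounded because the $c_i$ can be large. For the first point, the product formula gives
\[
d\,h(1+u)\;\ge\;\text{ord}_\mathfrak{p}(1+u)\log N(\mathfrak{p})\;\ge\;q\log N(\mathfrak{p}),
\]
which, together with $h(1+u)\le h(u)+\log 2$, trades $\log N(\mathfrak{p})$ for a quantity involving $h(u)=h(u_0\delta^q)$. For the second point, Lemma \ref{lSunit}(iii) yields $|b_i|\le(2s)^{O(s)}d\,h(u)$, hence $|c_i|\le (2s)^{O(s)}d\,h(u)/q$ and $h(\delta)\le(2s)^{O(s)}d\,R_S\,h(u)/q$; combined with $h(u)\le h(u_0)+q\,h(\delta)$, a bootstrap argument solves for $h(\delta)$ in terms of $R_S$ and $s$.

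Plugging these control estimates back into the output of Lemma \ref{lYu} produces an inequality of the shape $q\le A\log(Bq)$ with $A$ and $B$ polynomially dominated by $(2s)^{O(s)}P^2R_S^4$. Finally, applying the elementary Lemma \ref{lEstimates2} resolves this implicit inequality and yields the claimed bound $q\le(2s)^{O(s)}P^2R_S^4$.
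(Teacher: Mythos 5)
Your strategy of working at a prime $\mathfrak{p}\notin S$ dividing $y$ and applying Lemma \ref{lYu} there has a fatal flaw: Yu's bound carries the prefactor $e_\mathfrak{p}^{n}N(\mathfrak{p})/(\log N(\mathfrak{p}))^2$, and since $\mathfrak{p}$ lies outside $S$ the norm $N(\mathfrak{p})$ is not controlled by $P$ or by any datum of $K,S$ -- it grows with the solution. Your proposed remedy, $d\,h(1+u)\ge q\log N(\mathfrak{p})$, only bounds $\log N(\mathfrak{p})$ by $d\,h(1+u)/q$, so $N(\mathfrak{p})$ itself is still only bounded by $\exp\bigl(d\,h(1+u)/q\bigr)$, an exponential in the unbounded quantity $h(u)$. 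The same problem afflicts the pivot height: your bootstrap combines $h(\delta)\le (2s)^{O(s)}d\,R_S\,h(u)/q$ with $h(u)\le h(u_0)+q\,h(\delta)$, which yields $h(u)\le h(u_0)+(2s)^{O(s)}d\,R_S\,h(u)$; since the coefficient $(2s)^{O(s)}d\,R_S$ exceeds $1$, this is circular and does not bound $h(\delta)$ at all -- and indeed it cannot, because $\delta^q$ genuinely carries the (unbounded) height of $x_2/x_1$. Consequently the inequality you obtain from Lemma \ref{lYu} is not of the shape $q\le A\log(Bq)$ with $A,B$ controlled by $(2s)^{O(s)}P^2R_S^4$; it contains the uncontrolled factors $N(\mathfrak{p})$ and $h(\delta)$, and no bound on $q$ follows. (Two further symptoms: $P$ never actually enters your argument, since your only prime is outside $S$; and Yu's hypothesis $\text{ord}_p b_n\le\text{ord}_p b_i$ can fail when $\mathfrak{p}$ lies above the rational prime $q$, a case you cannot exclude for a prime dividing $y$ outside $S$.)

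The missing idea is to confine all linear-forms-in-logarithms estimates to places $v\in S$, where $N(\mathfrak{p})\le P$, and to use the hypothesis that $y$ is not an $S$-unit only through the soft inequality $N_S(y)=\prod_{v\in S}|y|_v\ge 2$. Concretely, the paper first reduces the exponents of $x_2$ modulo $q$ (close to your reduction of $u$), then proves two auxiliary bounds for large $q$: the exponent size $W$ is at most $(2s)^{O(s)}R_S\,q\,h(y)$ (via Lemma \ref{lSunit}(iii) and the product formula), and -- crucially -- $h(y)\le(2s)^{O(s)}R_S$, obtained by bounding $|1-x_1y^{-q}|_v$ from below at every $v\in S$ with Lemma \ref{lMatveev} (archimedean) and Lemma \ref{lYu} (non-archimedean, with $N(\mathfrak{p})\le P$) and summing over the places where $|y|_v>1$ against $\exp(qd\,h(y))$. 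Only then does the non-unit condition enter: $1=\prod_{v\in S}|x_2|_v\ge N_S(y)^q\exp\bigl(-s\,\cst{55}\cst{91}\log q\bigr)\ge 2^q\exp\bigl(-s\,\cst{55}\cst{91}\log q\bigr)$, which forces $q\le(2s)^{O(s)}P^2R_S^4$. Without an a priori bound of the type $h(y)\le(2s)^{O(s)}R_S$ (or an equivalent control of the heights entering your linear form), no estimate at a single prime dividing $y$ can close the argument.
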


Before proving Theorem \ref{t8}, we make some simplifications. We have the useful inequality
\[
d \leq 2s,
\]
which we will use throughout without further mention.

Choose a fundamental system of $S$-units $\{\eta_1, \ldots ,\eta_{s-1}\}$ as in Lemma \ref{lSunit}. We may write
\[
x_1 = \zeta_1 \eta_1^{a_1} \cdots \eta_{s - 1}^{a_{s - 1}}, \quad x_2 = \zeta_2 \eta_1^{b_1} \cdots \eta_{s - 1}^{b_{s - 1}}
\]
where $a_1, \ldots, a_{s - 1}, b_1, \ldots, b_{s - 1} \in \Z$ and $\zeta_1, \zeta_2 \in \mathcal{O}_K$ roots of unity. We assume
\begin{align}
\label{assq}
0 \leq b_i < q \text{ for } i = 1, \ldots, s - 1.
\end{align}
This is no loss of generality. Indeed, for $i = 1, \ldots, s - 1$, write
\[
b_i = qb_{i, 1} + b_{i, 2}, \quad 0 \leq b_{i, 2} < q,
\]
and
\[
\varepsilon_1 = \eta_1^{b_{1, 1}} \cdots \eta_{s - 1}^{b_{{s - 1}, 1}}, \quad \varepsilon_2 = \eta_1^{b_{1, 2}} \cdots \eta_{s - 1}^{b_{{s - 1}, 2}}.
\]
Thus $x_2 = \zeta_2 \varepsilon_2 \varepsilon_1^q$.  Then on replacing $x_1$, $x_2$, $y$ by $x_1 \varepsilon_1^{-q}$, $x_2 \varepsilon_1^{-q}$, $y \varepsilon_1^{-1}$, we get another solution of (\ref{suniteq}) with 
(\ref{assq}). Assuming henceforth (\ref{assq}), we put
\[
W := \max(|a_1|, \ldots, |a_{s - 1}|, b_1, \ldots, b_{s - 1}).
\]
We prove two lemmas before proving the key theorem. Take $\cst{53} := (2s)^{\Cst{52}s} P^2 R_S^2$ with $\Cst{52}$ a sufficiently large absolute constant.

\begin{lemma}
Assume (\ref{assq}) and assume also that $q > \cst{53}$. Then
\begin{align}
\label{lemma1}
W \leq \cst{54} q h(y)
\end{align}
with $\cst{54} := (2s)^{O(s)}R_S$.
\end{lemma}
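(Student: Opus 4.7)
The plan is to bound $W$ by treating the $a$-exponents and $b$-exponents separately. Assumption (\ref{assq}) already controls $\max_i b_i < q$, so the real task is to bound $\max_i |a_i|$, which I would do by first estimating the height of $x_1$.

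First I would bound $h(x_2)$. Since $\zeta_2$ is a root of unity, $h(\zeta_2) = 0$, and combining (\ref{assq}) with Lemma \ref{lSunit}(ii) gives
$$h(x_2) \leq \sum_{i=1}^{s-1} b_i h(\eta_i) < q \sum_{i=1}^{s-1} h(\eta_i) \leq (2s)^{O(s)} q R_S.$$
From $x_1 = y^q - x_2$, the subadditivity of $h$ and the identity $h(y^q) = qh(y)$ then yield
$$h(x_1) \leq qh(y) + h(x_2) + \log 2 \leq qh(y) + (2s)^{O(s)} q R_S + \log 2.$$

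Next I would bound each $a_i$ by inverting the logarithmic representation of $x_1$. Because $|\zeta_1|_v = 1$ for every $v \in M_K$, we have, for $j = 1, \ldots, s-1$,
$$\log |x_1|_{v_j} = \sum_{i=1}^{s-1} a_i \log |\eta_i|_{v_j}.$$
By Lemma \ref{lSunit}(iii) the inverse of the matrix $(\log |\eta_i|_{v_j})$ has entries of absolute value at most $(2s)^{O(s)}$. Since $x_1 \in \mathcal{O}_S^\ast$, the product formula gives $\sum_{v \in S} |\log|x_1|_v| = 2dh(x_1)$, so $|\log|x_1|_{v_j}| \leq 2dh(x_1)$ for each $j$, and solving the system yields
$$\max_i |a_i| \leq (2s)^{O(s)} h(x_1) \leq (2s)^{O(s)} qh(y) + (2s)^{O(s)} q R_S + (2s)^{O(s)}.$$

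To finish, I would combine this with $\max_i b_i < q$ and absorb the stray $qR_S$, $q$, and constant terms into a single bound of the form $(2s)^{O(s)} R_S\, qh(y)$. The absorption rests on Lemma \ref{lVoutier}, which gives $h(y) \geq \cst{1}/d$ with $d/\cst{1} \leq (2s)^{O(s)}$, together with the lower bound $R_S \geq 0.2052$ from (\ref{sregl}), which lets an unadorned $(2s)^{O(s)}$ swallow a factor of $1/R_S$. This yields $W \leq (2s)^{O(s)} R_S\, qh(y) = \cst{54}\, qh(y)$. The only real obstacle is the $(2s)^{O(s)}$ bookkeeping when absorbing lower-order terms; the extra hypothesis $q > \cst{53}$ does not seem to enter this particular lemma and is presumably reserved for a later step in the proof of Theorem \ref{t8}.
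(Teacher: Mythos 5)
Correct, and essentially the paper's own argument: both proofs reduce to bounding $\max_i |a_i|$ by inverting the linear system $\log|x_1|_{v_j} = \sum_i a_i \log|\eta_i|_{v_j}$ via Lemma \ref{lSunit}(iii), and then absorb the lower-order terms using Lemma \ref{lVoutier} and (\ref{sregl}). The only difference is cosmetic: you bound $|\log|x_1|_v|$ through the global height estimate $h(x_1) \le qh(y) + h(x_2) + \log 2$, whereas the paper estimates $|x_1|_v = |y^q - x_2|_v$ place by place and then uses the product formula over $S$; and, as you suspect, the threshold hypothesis on $q$ plays no essential role in this particular lemma.
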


\begin{proof}
By $\max(b_1, \ldots, b_{s - 1}) < q$, (\ref{sregl}) and Lemma \ref{lVoutier}, we may assume that
\[
W = \max(|a_1|, \ldots, |a_{s - 1}|).
\]
Fix $v \in S$. Then we have
\[
|x_1|_v = |y^q - x_2|_v \leq 4 \max(|y|^q_v, |x_2|_v).
\]
Hence
\[
\log |x_1|_v \leq \log 4 + \max(q \log |y|_v, \log |x_2|_v) \leq \log 4 + q|\log |y|_v| + |\log |x_2|_v|.
\]
But
\[
|\log |x_2|_v| = \left|\sum_{i = 1}^{s - 1} \log |\eta_i^{b_i}|_v\right| \leq \sum_{i = 1}^{s - 1} |b_i| |\log |\eta_i|_v| \leq q \sum_{i = 1}^{s - 1} 2dh(\eta_i) \leq (2s)^{O(s)}R_Sq
\]
by our choice of the fundamental system $\{\eta_1, \ldots, \eta_{s - 1}\}$ of $S$-units. Therefore,
\[
\log |x_1|_v \leq \log 4 + q|\log |y|_v| + (2s)^{O(s)}R_Sq \leq (2s)^{O(s)}R_Sq + q|\log |y|_v|.
\]
Also, by the product formula,
\[
-\log |x_1|_v = \sum_{\substack{w \in S \\ w \neq v}} \log |x_1|_w \leq (2s)^{O(s)}R_Sq + q\sum_{\substack{w \in S \\ w \neq v}} |\log |y|_w| \leq (2s)^{O(s)}R_Sq + 2dqh(y).
\]
But then
\[
|a_1 \log |\eta_1|_v + \cdots + a_{s - 1} \log |\eta_{s - 1}|_v| = |\log |x_1|_v| \leq (2s)^{O(s)}R_Sq + 2dqh(y),
\]
for all $v \in S$. Then in view of Lemma \ref{lSunit} (iii), we obtain a system of linear inequalities whose coefficient matrix has an inverse of which the elements have absolute values at most $(2s)^{O(s)}$. Consequently,
\[
W = \max(|a_1|, \ldots, |a_{s - 1}|) \leq s(2s)^{O(s)}((2s)^{O(s)}R_Sq + 2dqh(y)) \leq (2s)^{O(s)}R_Sqh(y)
\]
by (\ref{sregl}) and Lemma \ref{lVoutier}.
\end{proof}

\begin{lemma}
Assume (\ref{assq}) and $q > \cst{53}$. Then
\begin{align}
\label{lemma2}
h(y) \leq (2s)^{O(s)} R_S =: \cst{55}.
\end{align}
\end{lemma}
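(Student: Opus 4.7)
The plan is to apply Matveev's lower bound (Lemma \ref{lMatveev}) to the linear form
\[
\Lambda \;:=\; -\frac{x_1}{x_2} - 1 \;=\; -\frac{y^q}{x_2},
\]
which is nonzero since $y \ne 0$. Using the decomposition of $x_1, x_2$ in the fundamental system, I would write $-x_1/x_2 = \zeta\,\eta_1^{c_1}\cdots\eta_{s-1}^{c_{s-1}}$ with $\zeta = -\zeta_1/\zeta_2$ a root of unity and $c_i = a_i - b_i$, so $|c_i| \le 2W$. Treating $\zeta$ as an extra base with exponent $1$, Lemma \ref{lMatveev}, together with Lemma \ref{lSunit}(i) (giving $\prod h(\eta_i) \le (2s)^{O(s)} R_S$) and Lemma \ref{lVoutier} (to absorb the $\pi$-shifts in the $A_i'$), yields at every archimedean place $v$
\[
\log|\Lambda|_v \;\ge\; -(2s)^{O(s)}\, R_S\, \log W.
\]

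Next I would pick $v^* \in M_K$ at which $\log|y|_{v^*}$ is maximal. Since $|y|_v \le 1$ for $v\notin S$ and $|S|=s$, we have $\log|y|_{v^*} \ge dh(y)/s$, and in the generic case $v^*$ is archimedean (the finite case is handled identically via Lemma \ref{lYu}, with the extra factor $N(\mathfrak{p})/(\log N(\mathfrak{p}))^2$ absorbed into $P^2$ in $c_{53}$). Combining Matveev with the identity $\log|\Lambda|_{v^*} = q\log|y|_{v^*} - \log|x_2|_{v^*}$, and using the key estimate
\[
|\log|x_2|_{v^*}|\;\le\;\sum_{i=1}^{s-1}|b_i|\cdot\bigl|\log|\eta_i|_{v^*}\bigr|\;\le\;(2s)^{O(s)}\,R_S\,q,
\]
where we exploited $|b_i|<q$ together with $|\log|\eta_i|_{v^*}|\le 2dh(\eta_i)\le (2s)^{O(s)} R_S$ from Lemma \ref{lSunit}(ii), and (after, if necessary, interchanging the roles of $x_1, x_2$ and applying Matveev to the companion form $\Lambda' := -x_2/x_1 - 1$ to replace the crude bound $\log|x_1|_{v^*} \le W\cdot(2s)^{O(s)} R_S$ with one of shape $q\log|y|_{v^*} + (2s)^{O(s)} R_S \log W$), one gets
\[
\frac{q\,dh(y)}{s} \;\le\; q\log|y|_{v^*} \;\le\; (2s)^{O(s)} R_S\, q + (2s)^{O(s)} R_S\, \log W.
\]

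To close the argument, I would plug in $W \le c_{54}\, q\, h(y) = (2s)^{O(s)} R_S\, q\, h(y)$ from the previous lemma, so that $\log W \le (2s)^{O(s)} + \log R_S + \log q + \log h(y)$. Because $q > c_{53} = (2s)^{O(s)} P^2 R_S^2$, the contribution of $(2s)^{O(s)} R_S\,\log W/q$ is of smaller order than $(2s)^{O(s)} R_S$, and dividing by $q$ in the previous display produces an inequality of the form $h(y) \le c\,(1 + \log h(y)/q)$ with $c = (2s)^{O(s)} R_S$. Lemma \ref{lEstimates2} (or an elementary self-bootstrap using that $q$ is very large compared to $c$) then forces $h(y) \le 2c$, which is the sought bound $h(y) \le (2s)^{O(s)} R_S =: c_{55}$.

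The main obstacle is this circular step: at $v^*$ the bound produced is not closed in $h(y)$ but only in $h(y)$ and $\log h(y)$, and one needs $q > c_{53}$ to break the loop. A secondary technical point is the case analysis in the second paragraph, where it may happen that at $v^*$ the term $|x_1|_{v^*}$ dominates $|x_2|_{v^*}$; one then applies Matveev to the companion form $\Lambda'$ instead, and must track that the asymmetry in the exponent normalisation (we only have $0\le b_i<q$, not $0\le a_i<q$) does not cost more than a factor $(2s)^{O(s)} R_S$ in $W$, which is consistent with Lemma~1's bound.
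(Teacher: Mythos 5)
There is a genuine gap at the central step. Your linear form $\Lambda=-x_1/x_2-1=-y^q/x_2$ involves only the root of unity and the $\eta_i$, and Lemma \ref{lMatveev} (resp.\ Lemma \ref{lYu}) supplies a \emph{lower} bound for $|\Lambda|_{v^*}$; but to pass from the identity $\log|\Lambda|_{v^*}=q\log|y|_{v^*}-\log|x_2|_{v^*}$ to your displayed inequality $q\,dh(y)/s\le (2s)^{O(s)}R_S\,q+(2s)^{O(s)}R_S\log W$ you need an \emph{upper} bound for $\log|\Lambda|_{v^*}$, equivalently for $\log|x_1|_{v^*}-\log|x_2|_{v^*}$, and no such bound independent of $y$ is available. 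The trivial estimate gives $\log|x_1|_{v^*}\le (2s)^{O(s)}R_S\,W\le(2s)^{O(s)}R_S^2\,q\,h(y)$ by (\ref{lemma1}), which after dividing by $q$ only yields the vacuous $h(y)\le(2s)^{O(s)}R_S^2\,h(y)$; and your companion-form fix gives $\log|x_1|_{v^*}\le q\log|y|_{v^*}+(2s)^{O(s)}R_S\log W$, which, substituted back, makes the main term $q\log|y|_{v^*}$ cancel on both sides and leaves a tautology. So the inequality in your second paragraph does not follow; the real circularity is not the mild $h(y)$-versus-$\log h(y)$ issue you flag at the end, but a cancellation of the quantity you are trying to bound.

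The way around this (and the paper's route) is to put $y$ itself among the bases of the linear form: write $|x_2|_v=|y|_v^q\,\bigl|1-\zeta_1\eta_1^{a_1}\cdots\eta_{s-1}^{a_{s-1}}y^{-q}\bigr|_v$ and apply Lemma \ref{lMatveev} (resp.\ Lemma \ref{lYu}) with the $s+1$ bases $\zeta_1,\eta_1,\ldots,\eta_{s-1},y$ and exponents $1,a_1,\ldots,a_{s-1},-q$. Then $h(y)$ enters the lower bound only linearly through the last height factor, while the exponents enter only through $\log q$ (the $B$-term is controlled by (\ref{lemma1}) after dividing by $A_n'\approx dh(y)$), giving $|1-x_1y^{-q}|_v>\exp\bigl(-(2s)^{O(s)}PR_S\,h(y)\log q\bigr)$ for all $v\in S$. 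Taking the product over the places with $|y|_v>1$ and comparing with the upper bound $|x_2|_v\le\exp\bigl((2s)^{O(s)}R_S\,q\bigr)$, which uses only $0\le b_i<q$ from (\ref{assq}), yields $q\,dh(y)-s(2s)^{O(s)}PR_S\,h(y)\log q\le s(2s)^{O(s)}R_S\,q$; since $q$ exceeds the threshold in the hypothesis, the $h(y)\log q$ term is at most half of $q\,dh(y)$ and the bound $h(y)\le(2s)^{O(s)}R_S$ follows. The decisive gain of a factor $q$ against $\log q$ multiplying $h(y)$ is exactly what a linear form omitting $y$ cannot produce.
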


\begin{proof}
Fix $v \in S$. By (\ref{suniteq})
\[
|x_2|_v = |y^q - x_1|_v = |y^q|_v |1 - x_1y^{-q}|_v = |y^q|_v |1 - \zeta_1 \eta_1^{a_1} \cdots \eta_{s - 1}^{a_{s - 1}}y^{-q}|_v.
\]
We distinguish the cases that $v$ is archimedean and that $v$ is non-archimedean. First suppose that $v$ is archimedean. We apply Lemma \ref{lMatveev} with $n = s + 1$, $(\alpha_1, \ldots, \alpha_n) = (\zeta_1, \eta_1, \ldots, \eta_{s - 1}, y)$ and $(b_1, \ldots, b_n) = (1, a_1, \ldots, a_{s - 1}, -q)$.
For $i = 2, \ldots, s$, we use
\[
dh(\alpha_i) + \pi \leq d(1 + \pi \cst{1}^{-1}) h(\alpha_i).
\]
So we can take $A_i' = dh(\alpha_i) + \pi$ for $i \not \in \{2, \ldots, s\}$ and $A_i' = d(1 + \pi \cst{1}^{-1}) h(\alpha_i)$ for $i \in \{2, \ldots, s\}$. Because we need to prove that $h(y)$ is bounded, we may suppose that $h(y) > \pi$. Then it follows that $h(y) < A_n' < (d + 1)h(y)$ and $B < (2s)^{O(s)} R_S^2 q$. Lemma \ref{lMatveev} gives after enlarging $\Cst{52}$ if necessary
\[
|1 - x_1y^{-q}|_v > \exp (-\cst{56}h(y)\log q)
\]
with $\cst{56} := (2s)^{O(s)} R_S$.

Next suppose that $v$ is non-archimedean. Suppose that $v$ corresponds to a prime ideal $\mathfrak{p}$. We may assume that $\text{ord}_\mathfrak{p}(q) = 0$ since $q > \cst{53}$. We apply Lemma \ref{lYu} with $n = s + 1$, $(\alpha_1, \ldots, \alpha_n) = (\zeta_1, \eta_1, \ldots, \eta_{s - 1}, y)$ and $(b_1, \ldots, b_n) = (1, a_1, \ldots, a_{s - 1}, -q)$. Take $\delta = \frac{1}{2}$. Then $B \leq \cst{54}qh(y)$, $B_n = q$, $h_n' = h(y)$ by assuming $h(y) \geq \frac{1}{16}e^2 d^2$ and 
\[
M \leq \exp(O(s^2)) P^{d(s + 2)} R_S q.
\]
This gives
\begin{align*}
|1-x_1y^{-q}|_v &= \exp (- \log N(\mathfrak{p}) \text{ord}_{\mathfrak{p}}(1-\zeta_1 \eta_1^{a_1} \cdots \eta_{s - 1}^{a_{s - 1}}y^{-q})) \\
&> \exp\left(- (2s)^{O(s)} P \max \left((2s)^{O(s)} R_S h(y)\log(2M), \cst{54}h(y)\right)\right).
\end{align*}
By taking $\Cst{52}$ sufficiently large again, we find thanks to our assumption $q > \cst{53}$
\[
q > \sqrt[ds]{O(1)^{s^2} P^{d(s + 2)} R_S}
\]
and therefore
\[
|1-x_1y^{-q}|_v > \exp\left(- (2s)^{O(s)} P R_S h(y)\log q\right).
\]
We conclude in both cases that
\[
|1-x_1y^{-q}|_v > \exp(-\cst{91}h(y)\log q)
\]
where $\cst{91} := (2s)^{O(s)} P R_S$. Define $S_1 = \{v \in S : |y|_v > 1\}$. Then by
\[
\prod_{v \in S_1} |y|_v= \prod_{v \in M_K}\max (1, |y|_v)=\exp(d h(y))
\]
it follows
\begin{align*}
\exp(s (2s)^{O(s)}R_Sq) &\geq \prod_{v \in S_1} |x_2|_v = \exp (qdh(y))\prod_{v \in S_1} |1-x_1y^{-q}|_v \\
&> \exp(qdh(y) - s\cst{91}h(y)\log q).
\end{align*}
Making $\Cst{52}$ sufficiently large gives
\[
\sqrt{q} > \frac{2s\cst{91}}{d}.
\]
But we have the well-known inequality
\[
\frac{q}{\log q} > \sqrt{q},
\]
so
\[
qdh(y) > 2s\cst{91}h(y)\log q.
\]
We conclude that
\[
\exp(s (2s)^{O(s)}R_Sq) > \exp\left(\frac{1}{2}qdh(y)\right),
\]
hence
\[
h(y) \leq \frac{2s}{d} (2s)^{O(s)}R_S \leq (2s)^{O(s)} R_S.
\]
So we can take
\[
\cst{55} = \max\left(\pi, \frac{1}{16}e^2d^2, (2s)^{O(s)} R_S\right) \leq (2s)^{O(s)} R_S
\]
proving (\ref{lemma2}).
\end{proof}

\begin{proof}[Proof of Theorem \ref{t8}]
We showed earlier that
\[
|1-x_1y^{-q}|_v > \exp(-\cst{91}h(y)\log q)
\]
for all $v \in S$. We may assume that $q > \cst{53}$ with $\cst{53}$ sufficiently large so that (\ref{lemma2}) is valid. Then, because $x_2 = y^q(1 - x_1y^{-q})$ is an $S$-unit, we have
\begin{align*}
1 &= \prod_{v \in S} |x_2|_v \\
&= \prod_{v \in S} |y|_v^q \prod_{v \in S} |1 - x_1y^{-q}|_v \\
&\geq N_S(y)^q \exp (-s \cst{55} \cst{91} \log q),
\end{align*}
where
\[
N_S(y) = \prod_{v \in S} |y|_v.
\]
Because $y$ is a non-zero non-unit in $\mathcal{O}_S$, we have $|N_S(y)| \geq 2$. Hence
\[
1 \geq 2^q \exp(-s \cst{55} \cst{91} \log q)
\]
giving
\[
s \cst{55} \cst{91} \sqrt{q} \geq s \cst{55} \cst{91} \log q \geq q \log 2.
\]
We conclude that
\[
q \leq \left(\frac{s \cst{55} \cst{91}}{\log 2}\right)^2 \leq (2s)^{O(s)} P^2 R_S^4.
\]
This gives the desired bound for $q$, completing the proof.
\end{proof}

\subsection{Proof of Theorem \ref{tCatalana}}
We now prove Theorem \ref{tCatalana} in several steps. \\

\noindent \textbf{A: simplifications} \\
Let $x$, $y$, $p$, $q$ be a solution of (\ref{catalan}) satisfying the conditions of Theorem \ref{tCatalana}. We follow \citep{BGT} with the necessary modifications. We first show that we can make certain assumptions without loss of generality.

Note that (\ref{pqbound}) is an easy consequence of (\ref{heightbound}) and Lemma \ref{lVoutier}. So from now on we may assume that $p$ and $q$ are prime and our goal will be to show (\ref{pqprimebound}). If we have (\ref{pqprimebound}), then (\ref{heightbound}) follows from Lemma \ref{lsuper} and \ref{lhyper}. We may further assume that $p > 2$ and $q > 2$. Indeed, if e.g. $p = 2$, then we apply Lemma \ref{lST} with $f(X) = \pm (X^2 - 1)$ to conclude that $q$ is bounded.

If $q$ is a prime with $q > 2$, then $q$ is odd. Hence we may restrict our attention to the equation
\begin{align}
\label{catalan2}
x^p + y^q = 1
\end{align}
in $x, y \in \mathcal{O}_S$, $p, q \in \mathbb{N}$ with $p$ and $q$ primes, since we can replace $y$ by $-y$ when necessary.

It is further no restriction to assume that neither $x$ nor $y$ is an $S$-unit. Indeed, if both $x$ and $y$ are $S$-units, then (\ref{catalan2}) and Lemma \ref{lSuniteq} with $\alpha = \beta = 1$ imply 
\[
h(x^p) = ph(x) \leq (2s)^{O(s)} P^2 R_S^2
\]
and 
\[
h(y^q) = qh(y) \leq (2s)^{O(s)} P^2 R_S^2,
\]
whence we are done by Lemma \ref{lVoutier}. If exactly one of $x$, $y$ is an $S$-unit, $x$ say, then by applying Theorem \ref{t8} with $x_1 = -x^p$, $x_2 = 1$ to $-x^p + 1 = y^q$, we obtain 
\[
q \leq (2s)^{O(s)} P^2 R_S^4
\]
and
\[
p \leq (2s)^{O(s)} P^2 R_S^6,
\]
giving us the desired bounds.

We may also assume that $h(x) > 3$ and $h(y) > 3$. Indeed, suppose e.g. that $h(y) \leq 3$. By Theorem \ref{tNorthcott} there are only finitely many $y \in K$ such that $h(y) \leq 3$. Now take $S'$ large enough such that all $y \in K$ with $h(y) \leq 3$ become $S'$-units. If $x$ becomes an $S'$-unit, we apply Lemma \ref{lSuniteq}. Otherwise we apply Theorem \ref{t8}.

If $p = q$, then $x^p$, $-xy$ is a solution of the equation
\[
u(u - 1) = v^p
\]
in $u, v \in \mathcal{O}_S$. But $xy$ is not an $S$-unit so certainly not a root of unity. Hence, by Lemma \ref{lST}, we have 
\[
p = q \leq (2s)^{O(s)} |D_K|^{12} P^4.
\]
So we may assume without loss of generality that $p > q$. 

Finally, we may assume that $q > \cst{10} := P \geq 2$. Indeed, if $q \leq \cst{10}$, then we apply Lemma \ref{lST} with $f(Y) = 1 - Y^q$ to conclude that
\begin{align}
\label{pfinal3}
p \leq (P^2s)^{O(Ps)} |D_K|^{6P} P^{P^2}.
\end{align}

\noindent \textbf{B: a special case} \\
By A) we may restrict our attention to equation (\ref{catalan2}) in non-zero non-$S$-units $x, y \in \mathcal{O}_S$ with $h(x) > 3$ and $h(y) > 3$ and primes $p, q$ with $p > q > \cst{10} \geq 2$. We first deal with the special case that
\begin{align}
\label{ass}
(x - 1)^p + (y - 1)^q = 0,
\end{align}
which can be dealt with in an elementary way.

If $\mathfrak{p} \mid x - 1$ for some prime ideal $\mathfrak{p}$ in $\mathcal{O}_S$, then (\ref{ass}) implies $\mathfrak{p} \mid y - 1$. But it follows then from (\ref{catalan2}) that $\mathfrak{p} \mid x$. Hence $\mathfrak{p} \mid 1$ which is impossible. Thus $x - 1$ is an $S$-unit and, by (\ref{ass}), $y - 1$ is also an $S$-unit.

Subsequently we show that there is an $S$-unit $\varepsilon$ such that
\[
x = 1 - \varepsilon^q \text{ and } y = 1 + \varepsilon^p.
\]
Let $w \in \overline{\mathbb{Q}}$ be such that $w^q = 1 - x$. Then $w^{pq} = (y - 1)^q$. Hence $w^p = \rho(y - 1)$ with $\rho$ a $q$th root of unity. For any $q$th root of unity $\zeta$ we have $(\zeta w)^q = 1 - x$ and $(\zeta w)^p = \zeta^p \rho (y - 1)$. By $\gcd(p, q) = 1$ we can choose $\zeta$ such that $\zeta^p = \rho^{-1}$. Put $\varepsilon = \zeta w$. Then $\varepsilon^q = 1 - x$ and $\varepsilon^p = y - 1$. Hence $\varepsilon^p, \varepsilon^q \in K$. Since $\gcd(p, q) = 1$, we find $\varepsilon \in K$ by applying Euclid's algorithm to the exponents. But $\varepsilon^p$ is an $S$-unit, thus $\varepsilon$ is also an $S$-unit. Furthermore, 
\[
3 < h(y) \leq h(1) + h(\varepsilon^p) + \log 2 = ph(\varepsilon) + \log 2
\] 
hence $\varepsilon$ is not a root of unity. Therefore we have by Lemma \ref{lVoutier}
\begin{align}
\label{elow}
dh(\varepsilon) > \cst{1}.
\end{align}

Let $\mathfrak{p}$ be an arbitrary prime ideal divisor of $q$ in $\mathcal{O}_S$. (\ref{catalan2}) and (\ref{ass}) imply that
\begin{align}
\label{mod}
(x - 1)^p \equiv 1 - y^q \equiv x^p \mod \mathfrak{p}.
\end{align}
Since $x - 1$ is an $S$-unit, we have $\mathfrak{p} \nmid x - 1$ and so, by (\ref{mod}), $\mathfrak{p} \nmid x$. There is an $x' \in \mathcal{O}_S$ with $\mathfrak{p} \nmid x'$ and $xx' \equiv 1 \mod \mathfrak{p}$. Hence (\ref{mod}) gives
\[
((x - 1) x')^p \equiv 1 \mod \mathfrak{p}.
\]
Here $(x - 1)x' \equiv 1 - x' \not \equiv 0$ and $\not \equiv 1 \mod \mathfrak{p}$. This means that $p$ is the smallest positive integer $t$ for which
\[
(1 - x')^t \equiv 1 \mod \mathfrak{p}.
\]
But
\[
(1 - x')^{N(\mathfrak{p}) - 1} \equiv 1 \mod \mathfrak{p},
\]
hence $p \mid N(\mathfrak{p}) - 1$ in $\Z$. Since $N(\mathfrak{p}) = q^f$ with some positive integer $f \leq d$, we obtain
\begin{align}
\label{plow}
p \leq q^d.
\end{align}
Using (\ref{elow}) and (\ref{plow}), we shall now prove that $q$ is bounded. Take a place $v \in S$ such that $|\varepsilon|_v \geq H(\varepsilon)^{d/s}$. Then
\begin{align}
\label{elow2}
|\varepsilon|_v \geq H(\varepsilon)^{d/s} = \exp(h(\varepsilon)d/s) \geq 1 + h(\varepsilon)d/s > 1 + \cst{1}/s
\end{align}
by (\ref{elow}). Put
\[
f(z) = (1 - z^q)^p + (1 + z^p)^q - 1.
\]
Then
\begin{align}
\label{fzero}
0 = f(\varepsilon) = \sum_{k = 0}^p \binom{p}{k} (-\varepsilon^q)^k + \sum_{l = 0}^q \binom{q}{l} \varepsilon^{pl} - 1.
\end{align}
The leading term of $f$ is $pz^{(p - 1)q}$. First suppose that $v$ is infinite and let $\sigma: K \rightarrow \mathbb{C}$ be an embedding corresponding to $v$. We may suppose that $\sigma$ is the identity. Then $|\varepsilon|_v = |\varepsilon|^{s(v)}$ with $s(v) = 1$ if $v$ is real and $s(v) = 2$ if $v$ is complex, hence by (\ref{elow2})
\begin{align}
\label{elow3}
|\varepsilon| > \sqrt{1 + \cst{1}/s} =: 1 + \cst{4}.
\end{align}
So by (\ref{fzero}), we have
\begin{align*}
p|\varepsilon|^{(p - 1)q} &= \left|\sum_{k = 0}^{p - 2} \binom{p}{k} (-\varepsilon^q)^k + \sum_{l = 0}^{q - 1} \binom{q}{l} \varepsilon^{pl} - 1\right| \\
&\leq q|\varepsilon|^{p(q - 1)} + \sum_{k = 0}^{p - 2} \binom{p}{k} |\varepsilon|^{kq} + \sum_{l = 1}^{q - 2} \binom{q}{l} |\varepsilon|^{lp}.
\end{align*}
Combined with $p > q$ and (\ref{elow3}) this gives
\begin{align}
\label{comb}
1 &\leq |\varepsilon|^{q - p} + \frac{1}{p} \sum_{k = 0}^{p - 2} \binom{p}{k} |\varepsilon|^{(k - p + 1)q} + \frac{1}{p} \sum_{l = 1}^{q - 2} \binom{q}{l} |\varepsilon|^{(l - q)p + q} \nonumber \\
&\leq \frac{1}{|\varepsilon|} + \frac{1}{p} \sum_{k = 1}^{p - 1} \binom{p}{k + 1} |\varepsilon|^{-kq} + \frac{1}{p} \sum_{l = 1}^{q - 2} \binom{q}{l + 1} |\varepsilon|^{-lp} \nonumber \\
&< \frac{1}{|\varepsilon|} + \sum_{k = 1}^\infty p^k |\varepsilon|^{-kq} + \sum_{l = 1}^\infty q^l |\varepsilon|^{-lp},
\end{align}
and subsequently, by (\ref{plow}) and (\ref{elow3}),
\begin{align}
\label{comb2}
\frac{p}{|\varepsilon|^p} \leq \frac{p}{|\varepsilon|^q} \leq \frac{q^d}{(1 + \cst{4})^q} < \frac{\cst{4}}{4(1 + \cst{4})} < \frac{1}{2}
\end{align}
after taking $q$ sufficiently large. To find a suitable lower bound for $q$, we want to apply Lemma \ref{lEstimates2} with $x = q$, $a = d$, $b = 1 + \cst{4}$ and $c = \frac{4(1 + \cst{4})}{\cst{4}}$. So we need to check that
\[
\frac{2d}{\log(1 + \cst{1}/s)} c^{1/d} = \frac{d}{\log(1 + \cst{4})} c^{1/d} > e.
\]
Observe that $\cst{1}/s < 1$, hence $\cst{4} < 1$. This gives
\[
2d c^{1/d} \geq 4,
\]
so we can apply Lemma \ref{lEstimates2}. Lemma \ref{lEstimates2} tells us that (\ref{comb2}) holds if
\begin{align}
\label{qpoly}
q > (2ds)^{O(1)}.
\end{align}
If $q \leq (2ds)^{O(1)}$, then (\ref{plow}) gives us the desired bound for $p$. So from now on we may assume (\ref{qpoly}) and hence (\ref{comb2}).

It follows from (\ref{elow3}), (\ref{comb}) and (\ref{comb2}) that
\[
\frac{\cst{4}}{1 + \cst{4}} \leq 1 - \frac{1}{|\varepsilon|} < \sum_{k = 1}^\infty p^k |\varepsilon|^{-kq} + \sum_{l = 1}^\infty q^l |\varepsilon|^{-lp} \leq \frac{2p}{|\varepsilon|^q} + \frac{2q}{|\varepsilon|^p} < \frac{\cst{4}}{1 + \cst{4}},
\]
a contradiction.

Now suppose that $v$ is finite. Then (\ref{fzero}) implies
\begin{align*}
|p|_v |\varepsilon|_v^{(p - 1)q} &= \left|\sum_{k = 0}^{p - 2} \binom{p}{k} (-\varepsilon^q)^k + \sum_{l = 0}^{q - 1} \binom{q}{l} \varepsilon^{lp} - 1\right|_v \\
&= \left|q\varepsilon^{p(q - 1)} + \sum_{k = 0}^{p - 2} \binom{p}{k} (-\varepsilon^q)^k + \sum_{l = 1}^{q - 2} \binom{q}{l} \varepsilon^{lp}\right|_v \\
&\leq \max_{i, j} \left(|q|_v |\varepsilon|_v^{p(q - 1)}, \left| \binom{p}{i} (-\varepsilon^q)^i \right|_v, \left|\binom{q}{j} \varepsilon^{jp}\right|_v \right),
\end{align*}
where the maximum is taken over $i = 0, \ldots, p - 2$ and $j = 1, \ldots, q - 2$. Hence
\[
1 \leq \max_{i, j} \left(\left|\frac{q}{p}\right|_v |\varepsilon|_v^{q - p}, \left|\frac{1}{p} \binom{p}{i}\right|_v |\varepsilon|_v^{(i - p + 1)q}, \left|\frac{1}{p} \binom{q}{j}\right|_v |\varepsilon|_v^{(j - q)p + q}\right).
\]
If $p$ is sufficiently large as we may assume, we have
\[
\left|\frac{1}{p}\right|_v = 1.
\]
So we get by $p > q$
\[
1 \leq |\varepsilon|_v^{-1},
\]
a contradiction. \\

\noindent \textbf{C: ideal arithmetic} \\
In view of A) and B) we restrict our further attention to equation (\ref{catalan2}) in non-zero non-$S$-units $x, y \in \mathcal{O}_S$ with $h(x) > 3$ and $h(y) > 3$ and primes $p, q$ with $p > q > \cst{10} \geq 2$ such that
\begin{align}
\label{ass2}
(x - 1)^p + (y - 1)^q \neq 0.
\end{align}
For any $\alpha \in K$ we denote by $[\alpha]$ the fractional principal ideal of $\mathcal{O}_S$ generated by $\alpha$. We have, by (\ref{catalan2}),
\[
[y]^q = [1 - x][1 + x + \cdots + x^{p - 1}] = [x - 1][\beta(x - 1) + p]
\]
for some $\beta \in \mathcal{O}_S$. Assuming $p > P$, we can write
\[
[p] = \mathfrak{p}_1^{a_1} \cdots \mathfrak{p}_r^{a_r}
\]
where $\mathfrak{p}_1, \ldots, \mathfrak{p}_r$ are distinct prime ideals in $\mathcal{O}_S$, $r \leq d$, and $a_1, \ldots, a_r$ are positive integers not exceeding $d$. If, for some prime ideal $\mathfrak{p}$ and positive integer $a$, $\mathfrak{p}^a$ is a common divisor of $[x - 1]$ and $[\beta(x - 1) + p]$ then $\mathfrak{p}^a \mid [p]$ and therefore $a \leq d$. Hence we can write
\[
[x - 1] = \mathfrak{p}_1^{b_1} \cdots \mathfrak{p}_r^{b_r} \mathfrak{a}^q
\]
where $\mathfrak{a}$ is an integral ideal and $b_1, \ldots, b_r$ are rational integers with absolute values at most $d$. Since $N(\mathfrak{p}_i) = p^{f_i}$ for some positive integer $f_i \leq d$, we have
\[
p^{-d^2} \leq N(\mathfrak{p}_i^{b_i}) \leq p^{d^2} \quad (i = 1, \ldots, r).
\]
Let $h$ denote the class number of $K$. We have
\begin{align}
\label{princ}
[x - 1]^h = (\mathfrak{p}_1^{b_1} \cdots \mathfrak{p}_r^{b_r})^h \mathfrak{a}^{hq}.
\end{align}
Here $\mathfrak{a}^h = [\kappa]$ and $(\mathfrak{p}_1^{b_1} \cdots \mathfrak{p}_r^{b_r})^h = [\pi_0]$ for some $\kappa \in \mathcal{O}_S$ and $\pi_0 \in K$ such that $\pi_0 = \frac{\pi_1}{\pi_2}$ with $\pi_1, \pi_2 \in \mathcal{O}_S$ and
\begin{align}
\label{Np}
|\log N(\pi_k)| \leq d^3h \log p \quad (k = 0, 1, 2).
\end{align}
It follows from (\ref{princ}) that
\begin{align}
\label{class}
(x - 1)^h = \varepsilon \pi_0 \kappa^q
\end{align}
for some $S$-unit $\varepsilon$. By virtue of Lemma \ref{lSunit} and Lemma \ref{lhsmall} and (\ref{Np}) and (\ref{class}) there are fundamental $S$-units $\eta_1, \ldots, \eta_{s - 1}$ such that $h(\eta_i) \leq (2s)^{O(s)}$ and that
\begin{align}
\label{class2}
(x - 1)^h = \eta_1^{u_1} \cdots \eta_{s - 1}^{u_{s - 1}} \theta_0 w^q
\end{align}
where the $u_i$ are rational integers with $0 \leq u_i < q$ for $i = 1, \ldots, s - 1$, $0 \neq w \in \mathcal{O}_S$ and $0 \neq \theta_0 \in K$ with $\theta_0 = \frac{\theta_1}{\theta_2}$ such that $\theta_1, \theta_2 \in \mathcal{O}_S$ and
\begin{align}
\label{hthu}
h(\theta_k) \leq \frac{1}{d} \log N(\pi_k) + \cst{2}R + \frac{h}{d} \log Q \leq d^2h \log p + \cst{2}R + \frac{h}{d} \log Q \leq (2s)^{O(s)} R h P \log p
\end{align}
for $k = 1, 2$. By making the constant inside $O(\cdot)$ sufficiently large, (\ref{hthu}) also holds for $k = 0$. Similarly, we can write
\begin{align}
\label{classy}
(1 - y)^h = \eta_1^{v_1} \cdots \eta_{s - 1}^{v_{s - 1}} \tau_0 \sigma^p
\end{align}
with rational integers $v_i$ such that $0 \leq v_i < p$ for $i = 1, \ldots, s - 1$, and with $0 \neq \sigma \in \mathcal{O}_S$, $0 \neq \tau_0 \in K$ such that
\begin{align}
\label{htau}
h(\tau_0) \leq (2s)^{O(s)} R h P \log q.
\end{align}

\noindent \textbf{D: first bounds for $p$ and $q$} \\
Put $X = H(x)$ and $Y = H(y)$. In this section our goal will be to show that
\begin{align}
\label{pl3}
p \leq O(1) d^{13} s P \log Y \log p.
\end{align}
Let $v \in S$ be such that $|x|_v \geq H(x)^{d/s}$. It follows from (\ref{catalan2}) that
\begin{align}
\label{lambda1}
\Lambda_1 := 1 - \frac{(-y)^q}{x^p} = \frac{1}{x^p},
\end{align}
whence
\begin{align}
\label{lambda1u}
|\Lambda_1|_v = \frac{1}{|x|_v^p} \leq X^{-pd/s}.
\end{align}
If $v$ is infinite, embed $K$ in $\mathbb{C}$ using an embedding $\sigma$ corresponding to $v$. We use  Lemma \ref{lMatveev} with $n = 2$, $(\alpha_1, \alpha_2) = (-y, x)$ and $(b_1, b_2) = (q, -p)$, giving
\begin{align}
\label{lambda1l}
\left|1 - \frac{(-y)^q}{x^p}\right|_v > e^{-O(1) d^5 \log X \log Y \log(3ep)}.
\end{align}
Assuming $p > 3e$, (\ref{lambda1u}) and (\ref{lambda1l}) imply
\begin{align}
\label{pl}
p \leq O(1) d^4 s \log Y \log p,
\end{align}
hence (\ref{pl3}).

If $v$ is finite, we apply Lemma \ref{lYu} with $n = 2$, $(\alpha_1, \alpha_2) = (-y, x)$ and $(b_1, b_2) = (q, p)$. So we can take $B = B_n = p$ and $\delta = \frac{1}{2}$. Recall that $p > q > P$, hence the conditions are satisfied. Because we want to prove (\ref{pl3}) in the case $v$ finite, we may assume that
\[
p > d P \log Y.
\]
Hence
\begin{align}
\label{lambda1l2}
|\Lambda_1|_v > \exp\left(-O(1) d^{14} P \log X \log Y \log p \right).
\end{align}
Now (\ref{lambda1u}) and (\ref{lambda1l2}) imply
\begin{align}
\label{pl2}
p \leq O(1) d^{13} s P \log Y \log p.
\end{align}
So in all cases we have (\ref{pl3}). By estimating $|\Lambda_2|_v$ with $\Lambda_2 := 1 - \frac{(-x)^p}{y^q} = \frac{1}{y^q}$ we can prove in a similar way that
\begin{align}
\label{ql}
q \leq O(1) d^{13} s P \log X \log p.
\end{align}

\noindent \textbf{E: a bound for $q$} \\
We shall now prove that
\begin{align}
\label{qlogl}
q < \cst{5} (\log p)^4
\end{align}
with $\cst{5} = (2s)^{O(s)} R^3 h^3 P^4 R_S^2$. To prove this we may assume that
\begin{align}
\label{ass3}
q > \log p.
\end{align}
Further, we may assume that
\begin{align}
\label{XYl}
\min(X, Y) > p^{\cst{6}}
\end{align}
with $\cst{6} := 4s/d$. Indeed if $Y \leq p^{\cst{6}}$ then $q < p \leq O(1) d^{13} s P \cst{6} (\log p)^2$ follows from (\ref{pl3}), implying (\ref{qlogl}). Further, in case $X \leq p^{\cst{6}}$, (\ref{qlogl}) immediately follows from (\ref{ql}). Let $v \in S$ be such that $|x|_v \geq X^{d/s}$. From (\ref{catalan2}) we obtain
\begin{align}
\label{bound1}
\left|\frac{(-y)^q}{x^p} - 1\right|_v = \frac{1}{|x|_v^p}.
\end{align}
We combine the cases $v$ real, $v$ complex and $v$ finite. Note that in all cases $|\cdot|_v^{1/2}$ satisfies the triangle inequality. Because $\cst{6} = 4s/d$, we get $|x|_v \geq 12$. Hence
\[
|x - 1|_v^{1/2} \geq |x|_v^{1/2} - |1|_v^{1/2} = |x|_v^{1/2} - 1 \geq \frac{1}{2} \sqrt{2} |x|_v^{1/2}
\]
and 
\begin{align}
\label{xminus1}
|x - 1|_v \geq \frac{1}{2}|x|_v \geq p^2
\end{align}
where we have used again $\cst{6} = 4s/d$. It follows that
\[
\left|\frac{x^p}{(x - 1)^p} - 1\right|_v^{1/2} = \left|\frac{((x - 1) + 1)^p - (x - 1)^p}{(x - 1)^p}\right|_v^{1/2} \leq \sum_{i = 1}^p \left(\frac{p^i}{|x - 1|_v^i}\right)^{1/2} \leq p\left(\frac{p}{|x - 1|_v}\right)^{1/2}
\]
and after squaring
\begin{align}
\label{bound2}
\left|\frac{x^p}{(x - 1)^p} - 1\right|_v \leq \frac{p^3}{|x - 1|_v} \leq \frac{2p^3}{|x|_v}.
\end{align}
Furthermore, by (\ref{catalan2}), $p > q$ and $|x|_v \geq 12$
\[
\frac{|y|_v^{q/2}}{|x|_v^{q/2}} \geq \frac{|x|_v^{p/2} - 1}{|x|_v^{q/2}} \geq \frac{|x|_v^{p/2} - 1}{|x|_v^{p/2}} \geq \frac{1}{2} > \left(\frac{1}{2}\right)^{q/2}.
\]
We conclude that
\begin{align}
\label{ymax}
|y|_v \geq \frac{1}{2}|x|_v \geq p^2 > q.
\end{align}
Hence we have
\[
\left|\frac{(1 - y)^q}{(-y)^q} - 1\right|_v^{1/2} = \left|\frac{(1 - y)^q + y^q}{(-y)^q}\right|_v^{1/2} \leq \sum_{i = 1}^q \left(\frac{q^i}{|y|_v^i}\right)^{1/2} \leq q \left(\frac{q}{|y|_v}\right)^{1/2}
\]
and after squaring
\begin{align}
\label{bound3}
\left|\frac{(1 - y)^q}{(-y)^q} - 1\right|_v \leq \frac{q^3}{|y|_v} \leq \frac{2p^3}{|x|_v}.
\end{align}
From (\ref{bound1}), (\ref{bound2}), (\ref{bound3}) and the identity
\[
z_1z_2z_3 - 1 = \prod_{i = 1}^3 (z_i - 1) + \sum_{1 \leq i < j \leq 3} (z_i - 1)(z_j - 1) + \sum_{i = 1}^3 (z_i - 1),
\]
we infer
\begin{align}
\label{bound4}
\left|\frac{(1 - y)^q}{(x - 1)^p} - 1\right|_v \leq \frac{26 p^6}{|x|_v} = \frac{O(1) p^6}{|x|_v}.
\end{align}
Further we have, by (\ref{catalan2}), (\ref{xminus1}) and (\ref{ymax}),
\begin{align}
\label{bound5}
\left|\frac{(1 - y)^q}{(x - 1)^p}\right|_v^{1/2} 
&= \left|\frac{(1 - y)^q}{y^q}\right|_v^{1/2} \cdot \left|\frac{1 - x^p}{(x - 1)^p}\right|_v^{1/2}
\leq 2\left(1 + \frac{1}{|y|_v^{1/2}}\right)^q \left(1 + \frac{1}{|x - 1|_v^{1/2}}\right)^p \nonumber\\
&\leq 2\left(1 + \frac{\sqrt{2}}{|x|_v^{1/2}}\right)^{p + q} \leq 2\left(1 + \frac{2}{p}\right)^{2p} \leq 2e^4 = O(1).
\end{align}
For
\begin{align}
\label{lambda3}
\Lambda_3 := \frac{(1 - y)^{qh}}{(x - 1)^{ph}} - 1
\end{align}
we obtain, from (\ref{bound4}) and (\ref{bound5}),
\begin{align}
\label{lambda3u}
|\Lambda_3|_v < \frac{O(1) \left(1 + O(1) + \cdots + O(1)^{h - 1}\right)^2 p^6}{|x|_v} \leq \frac{O(1)^{2h} p^6}{|x|_v}.
\end{align}

Suppose now that $\Lambda_3 \neq 0$, i.e. that $(x - 1)^{ph} \neq (1 - y)^{qh}$. Using (\ref{lambda3}), (\ref{class2}) and (\ref{classy}), we obtain
\[
\Lambda_3 = \eta_1^{e_1} \cdots \eta_{s - 1}^{e_{s - 1}} \tau_0^q \theta_0^{-p} \left(\frac{\sigma}{w}\right)^{pq} - 1
\]
where $e_i \in \Z$ with $|e_i| \leq pq$ for $i = 1, \ldots, s - 1$. Put $H_1 = H(\sigma)$, $H_2 = H(w)$ and $H_0 = \max(H_1, H_2)$. Then
\begin{align}
\label{H0}
H\left(\frac{\sigma}{w}\right) \leq H(\sigma)H(w) \leq H_0^2.
\end{align}
First suppose that $v$ is infinite. By applying Lemma \ref{lMatveev} to $\Lambda_3$ and using (\ref{hthu}), (\ref{htau}), (\ref{H0}) and $p > q$ we obtain
\[
|\Lambda_3|_v > \exp(-(2s)^{O(s)} R^2 h^2 P^2 R_S (\log p)^3 \log^\ast H_0).
\]
Next suppose that $v$ is finite. By applying Lemma \ref{lYu} to $\Lambda_3$ and using (\ref{hthu}), (\ref{htau}), (\ref{H0}) and $p > q$ we obtain
\[
|\Lambda_3|_v > \exp(-(2s)^{O(s)} R^2 h^2 P^3 R_S (\log p)^3 \log^\ast H_0)
\]
if $pq > s R h P R_S \log p \log q$. This together with (\ref{lambda3u}) gives in all cases
\begin{align}
\label{XH0}
d/s \log X \leq \log |x|_v \leq (2s)^{O(s)} R^2 h^2 P^3 R_S (\log p)^3 \log^\ast H_0.
\end{align}
If $H_0 \leq \cst{7} := e^{(2s)^{O(s)} R h P R_S}$, then (\ref{ql}) and (\ref{XH0}) give (\ref{qlogl}). We therefore assume that $H_0 > \cst{7}$.

First suppose that $H_2 > \cst{7}$. Then, by (\ref{hthu}) and (\ref{ass3}), we have
\[
\left|\frac{1}{\theta_0}\right|_v \leq H\left(\frac{1}{\theta_0}\right) = H(\theta_0) \leq e^{(2s)^{O(s)} R h P \log p} < e^{(2s)^{O(s)} R h P q} \leq H_2^{\frac{q}{4s}}
\]
for all $v \in S$ by taking the constant inside $O(\cdot)$ sufficiently large. Hence we obtain from (\ref{class2})
\[
|w|_v^q \leq |x - 1|_v^h \left|\frac{1}{\theta_0}\right|_v \cdot \prod_{i = 1}^{s - 1} \left|\frac{1}{\eta_i}\right|_v^{u_i} \leq |x - 1|_v^h H_2^{\frac{q}{4s}} e^{(s - 1) (2s)^{O(s)} R_S q} < 4^h X^{dh} H_2^{\frac{q}{3s}}
\]
again by taking the constant inside $O(\cdot)$ sufficiently large. Choosing $v \in S$ such that $|w|_v \geq H_2^{d/s}$, we obtain
\[
4^h X^{dh} H_2^{\frac{q}{3s}} > |w|_v^q \geq H_2^{qd/s}.
\]
Consequently, we have
\begin{align}
\label{XH02}
h \log X^d > \frac{qd}{s} \log H_2 - \log(4^hH_2^{\frac{q}{3s}}) \geq \left(\frac{d}{s} - \frac{2}{3s} \right) q \log H_2 \geq \frac{d}{3s} q \log H_2
\end{align}
if $H_2^{\frac{q}{3s}} \geq \cst{7}^{\frac{q}{3s}} \geq 4^h$. By using (\ref{classy}) and (\ref{htau}) one can prove in a similar manner that
\begin{align}
\label{YH0}
\log Y > \frac{1}{3hs} p \log H_1
\end{align}
if $H_1 > \cst{7}$. If $H_0 = H_2$, then (\ref{XH0}) and (\ref{XH02}) imply
\[
q < (2s)^{O(s)} R^2 h^3 P^3 R_S (\log p)^3,
\]
hence (\ref{qlogl}). Next suppose $H_0 = H_1$. From (\ref{catalan2}) we obtain
\[
qh(y) = h(y^q) = h(x^p - 1) \leq \log 2 + h(x^p) + h(1) = \log 2 + ph(x),
\]
so
\begin{align}
\label{XY}
q \log Y < \left(1 + \frac{d}{\cst{1}} \log 2 \right) p \log X.
\end{align}
Now (\ref{XH0}), (\ref{YH0}) and (\ref{XY}) imply
\[
\frac{1}{3hs} pq \log H_0 < q \log Y < \left(1 + \frac{d}{\cst{1}} \log 2 \right) p \log X <  (2s)^{O(s)} R^2 h^2 P^3 R_S p(\log p)^3 \log^\ast H_0,
\]
whence (\ref{qlogl}). \\

\noindent \textbf{F: completing the proof of E)} \\
To prove (\ref{qlogl}) we are left with the case
\begin{align}
\label{ass4}
(x - 1)^{ph} = (1 - y)^{qh}.
\end{align}
We can now repeat the argument of part E) above with
\[
\Lambda_4 := \frac{(1 - y)^q}{(x - 1)^p} - 1
\]
instead of $\Lambda_3$. By assumption (\ref{ass2}) we have $\Lambda_4 \neq 0$. We still need to derive a lower bound for $|\Lambda_4|_v$. Note that $\frac{(1 - y)^q}{(x - 1)^p}$ is a $h$-th root of unity, hence
\[
\frac{1}{d} \log |\Lambda_4|_v \geq -h(\Lambda_4) = -h\left(\frac{(1 - y)^q}{(x - 1)^p} - 1\right) \geq -\log 2 - h\left(\frac{(1 - y)^q}{(x - 1)^p}\right) - h(-1) = - \log 2.
\]
We conclude that
\[
|\Lambda_4|_v \geq 2^{-d}.
\]
Now inequality (\ref{qlogl}) follows. \\

\noindent \textbf{G: finishing the proof} \\
We shall now prove that $p$ is bounded from above by using (\ref{pl3}) and (\ref{qlogl}). By (\ref{pl3}) we may assume that $Y > 4^{s/d}$. Let $v \in S$ be such that $|y|_v \geq Y^{d/s} \geq 4$. Then, by (\ref{catalan2}),
\begin{align}
\label{fbound1}
\left|\frac{x^p}{(1 - y)^q}\right|_v^{1/2} = \left|\frac{1 - y^q}{(1 - y)^q}\right|_v^{1/2} \leq \frac{2|y|_v^{q/2}}{(|y|_v^{1/2}/2)^q} \leq 4^q.
\end{align}
Hence, using again (\ref{catalan2}),
\begin{align}
\label{fbound2}
\left|\frac{x^p}{(1 - y)^q} - 1\right|_v^{1/2} = \left|\frac{x^p + (y - 1)^q}{(1 - y)^q}\right|_v^{1/2} \leq \frac{q 2^{q/2} |y|_v^{(q - 1)/2}}{(|y|_v^{1/2}/2)^q} \leq \frac{4^q}{|y|_v^{1/2}}.
\end{align}
Putting
\[
\Lambda_5 := \frac{x^{ph}}{(1 - y)^{qh}} - 1,
\]
it follows from (\ref{fbound1}) and (\ref{fbound2}) that
\begin{align}
\label{lambda5u}
|\Lambda_5|_v < \frac{16^q \left(1 + 4^q + \cdots + 4^{q(h - 1)}\right)^2}{|y|_v} \leq \frac{16^{q(h + 1)}}{|y|_v}.
\end{align}
Suppose that $|\Lambda_5| \neq 0$, i.e. that $x^{ph} \neq (1 - y)^{qh}$. We are going to derive a lower bound for $|\Lambda_5|$. By (\ref{classy}) we have
\[
\frac{x^{ph}}{(1 - y)^{qh}} = \eta_1^{d_1} \cdots \eta_{s - 1}^{d_{s - 1}} \tau_0^{-q} \left(\frac{x^h}{\sigma^q}\right)^p
\]
with rational integers $d_i$ such that $|d_i| < pq$ for $i = 1, \ldots, s - 1$. We claim that
\[
|x|_v \geq \frac{1}{2} H(x)^{d/s}.
\]
To prove our claim, we note that
\[
|y^q|_v = |1 - x^p|_v \leq 4 \max(1, |x^p|_v)
\]
and
\[
H(x^p) = H(1 - y^q) \leq 2H(y^q).
\]
Combining gives
\[
|x|_v^p = |x^p|_v \geq \frac{1}{4}|y^q|_v - 1 \geq \frac{1}{4}H(y)^{qd/s} - 1 \geq \frac{1}{8} H(x)^{pd/s} - 1 \geq \left(\frac{1}{2}\right)^p H(x)^{pd/s}
\]
if $p \geq 4$, proving the claim. Hence, by (\ref{fbound1}) and (\ref{htau}),
\[
\left|\frac{x^h}{\sigma^q}\right|_v \leq 16^{h/p} \left(\prod_{i = 1}^{s - 1} |\eta_i|_w^{-d_i}\right)^{1/p} |\tau_0|_w^{q/p} \leq 16^h e^{d(s - 1) (2s)^{O(s)} R_S q} q^{d(2s)^{O(s)} R h P}.
\]
So
\[
\left(\frac{1}{2}\right)^hH(x^h)^{d/s} \leq |x^h|_v \leq |\sigma^q|_v 16^h e^{d(s - 1) (2s)^{O(s)} R_S q} q^{d(2s)^{O(s)} R h P}.
\]
Put $H_3 = H(\sigma)$. Then
\begin{align}
\label{Hsig}
H\left(\frac{x^h}{\sigma^q}\right) \leq H(x^h)H(\sigma)^q \leq \left(32^h e^{d(s - 1) (2s)^{O(s)} R_S q} q^{d(2s)^{O(s)} R h P}\right)^{s/d}  H_3^{q(1 + s)}.
\end{align}
First suppose that $v$ is infinite. By applying Lemma \ref{lMatveev} to 
\[
\Lambda_5 = \eta_1^{d_1} \cdots \eta_{s - 1}^{d_{s - 1}} \tau_0^{-q} \left(\frac{x^h}{\sigma^q}\right)^p - 1
\]
and using (\ref{htau}) and (\ref{Hsig}), we obtain
\begin{align}
|\Lambda_5|_v > \exp(-(2s)^{O(s)} R^2 h^2 P^2 R_S^2 q (\log p)^2 \log^\ast H_3).
\end{align}
Next suppose that $v$ is finite. By applying Lemma \ref{lYu} to $\Lambda_5$ and using (\ref{htau}), we obtain
\begin{align}
|\Lambda_5|_v > \exp(-(2s)^{O(s)} R^2 h^2 P^3 R_S^2 q (\log p)^2 \log^\ast H_3)
\end{align}
if $pq > s R h P R_S \log p$. So in all cases
\begin{align}
\label{lambda5l}
|\Lambda_5|_v > \exp(-(2s)^{O(s)} R^2 h^2 P^3 R_S^2 q (\log p)^2 \log^\ast H_3).
\end{align}
Comparing (\ref{lambda5u}) and (\ref{lambda5l}) we obtain
\begin{align}
\label{YH3}
\log Y \leq s/d \log |y|_v \leq (2s)^{O(s)} R^2 h^2 P^3 R_S^2 q (\log p)^2 \log^\ast H_3.
\end{align}
If $H_3 \leq \cst{9} := e^{(2s)^{O(s)} R h P R_S}$ then (\ref{YH3}) together with (\ref{pl3}) and (\ref{qlogl}) yields 
\[
p \leq (2s)^{O(s)} R^6 h^6 P^9 R_S^5 (\log p)^7.
\]
Suppose now that $H_3 > \cst{9}$. Then we have, analogously to (\ref{YH0}),
\begin{align}
\label{YH32}
\log Y > \frac{1}{3hs} p \log H_3.
\end{align}
From (\ref{qlogl}), (\ref{YH3}) and (\ref{YH32}) it follows now again that 
\[
p \leq (2s)^{O(s)} R^5 h^6 P^7 R_S^4 (\log p)^6.
\]
So in all cases
\[
p \leq (2s)^{O(s)} R^6 h^6 P^9 R_S^5 (\log p)^7,
\]
whence 
\begin{align}
\label{pfinal}
p \leq (2s)^{O(s)} R^{12} h^{12} P^{18} R_S^{10}.
\end{align}
Using the well-known inequalities
\[
Rh \leq |D_K|^{1/2} (\log^\ast |D_K|)^{d - 1}
\]
and
\[
R_S \leq Rh \prod_{i = 1}^t \log N(\mathfrak{p}_i) \leq |D_K|^{1/2} (\log^\ast |D_K|)^{d - 1} (\log P)^t,
\]
we get from (\ref{pfinal})
\begin{align}
\label{pfinal2}
p \leq (2s)^{O(s)} |D_K|^{11} (\log^\ast |D_K|)^{22(d - 1)} P^{18} (\log P)^{10t},
\end{align}
completing the proof. Recall that in A) we assumed that $q > \cst{10} := P \geq 2$. If $q \leq \cst{10}$, we derived (\ref{pfinal3}). But observe that (\ref{pfinal3}) gives a significantly larger bound for $p$ than (\ref{pfinal2}). So our final bound for $p$ is (\ref{pfinal3}). \\

\noindent \textbf{H: the remaining case} \\
We are left with the case $x^{ph} = (1 - y)^{qh}$. This implies
\[
(1 - y^q)^h = (1 - y)^{qh}
\]
and hence
\begin{align}
\label{yrel}
1 - y^q = \zeta (1 - y)^q,
\end{align}
where $\zeta$ is some $h$-th root of unity. Put
\[
\Lambda_6 := -\zeta^{-1} \left(\frac{y}{1 - y}\right)^q - 1.
\]
Take a valuation $v \in S$ such that $|1 - y|_v \geq H(1 - y)^{d/s}$. We start by deriving an upper bound for $|\Lambda_6|_v$ using (\ref{yrel})
\[
|\Lambda_6|_v = \left|-\zeta^{-1}\left(\frac{y}{1 - y}\right)^q - 1\right|_v = |-\zeta^{-1} (1 - y)^{-q}|_v = |(1 - y)^{-q}|_v \leq H(1 - y)^{-qd/s}.
\]
Next we derive a lower bound for $|\Lambda_6|_v$. By extending $K$ if necessary we may assume that $\zeta$ is in $K$. This increases the degree of $K$ by at most a factor $h$. First suppose that $v$ is infinite. By applying Lemma \ref{lMatveev} to $\Lambda_6$ we obtain
\[
\log |\Lambda_6|_v > -O(1) h^4d^4 \mbox{$h(\frac{y}{1 - y})$} \log q.
\]
Next suppose that $v$ is finite. By applying Lemma \ref{lYu} we obtain
\[
\log |\Lambda_6|_v > -O(1) h^{13}d^{13} P \mbox{$h(\frac{y}{1 - y})$} \log q
\]
if $q > dP$. Recall that we have assumed $h(y) > 3$, so
\[
\mbox{$h(\frac{y}{1 - y})$} = h((1 - y)^{-1} - 1) \leq \log 2 + h((1 - y)^{-1}) = \log 2 + h(1 - y) \leq 2h(1 - y).
\]
Combining everything gives
\[
\frac{-qd}{s} h(1 - y) \geq -O(1) h^{13}d^{13}P h(1 - y) \log q
\]
in all cases, which can be rewritten as
\[
q \leq O(1) h^{13}d^{12}Ps \log q.
\]
We conclude that
\begin{align}
\label{qyrel}
q \leq O(1)h^{14}d^{13}P^2s^2.
\end{align}
Let $f$ be the minimal polynomial of $y$ and let $g(X) := (1 - X)^{qh} - (1 - X^q)^h$. By $H$ we denote the naive height of a polynomial. Using (\ref{yrel}) and (\ref{qyrel}) together with Lemma 3.11 and remark 2 on page 81 in \citep{Waldschmidt} we get
\begin{align*}
h(y) &\leq \frac{1}{\deg{f}} \log H(f) + O(1) \\
&\leq \log H(f) + O(1) \\
&\leq \log(2^{\deg{g}} \sqrt{\deg{g} + 1} H(g)) + O(1) \\ 
&\leq O(1)qh + O(1) \\
&\leq O(1)h^{15}d^{13}P^2s^2.
\end{align*}
Now it is straightforward to give an upper bound for $p$ and inequality (\ref{pfinal2}) follows.

\section{Proof of Theorem \ref{tMT}}
\label{Specialization}
In this section we will bound $p$ and $q$ for the Catalan equation over finitely generated domains. We will follow \citep{Brindza}. \\

We recall some notation. Let $A = \Z[z_1, \ldots, z_r]$ be an integral domain finitely generated over $\Z$ with $r > 0$ and denote by $K$ the quotient field of $A$. We have
\[
A \cong \Z[X_1, \ldots, X_r]/I
\]
where $I$ is the ideal of polynomials $f \in \Z[X_1, \ldots, X_r]$ such that $f(z_1, \ldots, z_r) = 0$. Then $I$ is finitely generated. Let $d \geq 1$, $h \geq 1$ and assume that
\[
I = (f_1, \ldots, f_m)
\]
with $\deg f_i \leq d$, $h(f_i) \leq h$ for $i = 1, \ldots, m$. Here $\deg$ means the total degree of the polynomial $f_i$ and $h(f_i)$ is the logarithmic height of $f_i$. Our goal is to prove Theorem \ref{tMT}.

\begin{proof}
Let $x$, $y$, $p$, $q$ be an arbitrary solution. Without loss of generality we may assume that $z_1, \ldots, z_k$ forms a transcendence basis of $K/\Q$. We write $t := r - k$ and rename $z_{k + 1}, \ldots, z_r$ as $y_1, \ldots, y_t$ respectively. Define
\[
A_0 := \Z[z_1, \ldots, z_k], K_0 := \Q(z_1, \ldots, z_k).
\]
Then
\[
A = A_0[y_1, \ldots, y_t], K = K_0(y_1, \ldots, y_t).
\]
By Corollary 3.4 in \citep{EG} we have $K = K_0(u)$, $u \in A$, $u$ is integral over $A_0$, and $u$ has minimal polynomial
\[
F(X) = X^D + F_1X^{D - 1} + \cdots + F_D
\]
over $K_0$ with $F_i \in A_0$, $\deg F_i \leq (2d)^{\exp O(r)}$ and $h(F_i) \leq (2d)^{\exp O(r)} (h + 1)$. Furthermore, Lemma 3.2(i) in \citep{EG} tells us that $D \leq d^t$.

By Lemma 3.6 in \citep{EG} there exists non-zero $f \in A_0$ such that
\[
A \subseteq B := A_0[u, f^{-1}]
\]
and moreover $\deg f \leq (2d)^{\exp O(r)}$ and $h(f) \leq (2d)^{\exp O(r)} (h + 1)$. From now on, we will work in the larger ring $B$ to bound $p$ and $q$. So we will assume that $x, y \in B$ and bound $p$ and $q$.

We distinguish two cases. First, we consider the case $k = 0$. In this case we have $A_0 = \Z$, $K_0 = \Q$ and $t = r$. Then $K$ is a number field of degree $D \leq d^t$ and
\[
|D_K| \leq |\text{disc}(F)| \leq D^{2D - 1} \exp\left((2d)^{\exp O(r)} (h + 1)\right) \leq \exp\left((2d)^{\exp O(r)} (h + 1)\right)
\]
by using the result on the bottom of page 335 in \citep{LM}. Let $S$ contain all infinite valuations and all prime ideal divisors of $f$. Write $s = |S|$. Let $\mathfrak{p}_1, \ldots, \mathfrak{p}_n$ be the prime ideals in $S$. Put
\[
P := \max\{2, N(\mathfrak{p}_1), \ldots, N(\mathfrak{p}_n)\}
\]
and
\[
Q := N(\mathfrak{p}_1 \cdots \mathfrak{p}_n).
\]
By $h(f) \leq (2d)^{\exp O(r)} (h + 1)$, it follows that
\[
s \leq (2d)^{\exp O(r)} (h + 1)
\]
and
\[
P \leq \exp\left((2d)^{\exp O(r)} (h + 1)\right).
\]
We conclude that
\[
Q \leq |f|^D \leq \exp\left((2d)^{\exp O(r)} (h + 1)\right)
\]
and we can apply Theorem \ref{tCatalana} to get (\ref{resulta}).

Now consider the case $k > 0$. Fix an algebraic closure $\overline{K_0}$ of $K_0$. Put
\[
T_i = \{z_1, \ldots, z_k\} \setminus \{z_i\}.
\]
Let $k_i$ be the algebraic closure of $\Q(T_i)$ in $\overline{K_0}$. Thus, $A_0$ is contained in $k_i[z_i]$. Define
\[
M_i := k_i(z_i, u^{(1)}, \ldots, u^{(D)}),
\]
where $u^{(1)}, \ldots, u^{(D)}$ are the conjugates of $u$ over $K_0$. We need the following lemma.

\begin{lemma} 
\label{lIntersection}
We have that
\[
\bigcap_{i = 1}^k k_i = \overline{\Q}.
\]
\end{lemma}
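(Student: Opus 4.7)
The inclusion $\overline{\Q} \subseteq \bigcap_{i=1}^k k_i$ is immediate, since each $k_i$ contains $\Q$ and is algebraically closed in $\overline{K_0}$. The substance is the reverse inclusion, which I would prove using derivations; the fact that we are in characteristic $0$ makes every algebraic extension separable, which is what keeps the argument clean.

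For each $i \in \{1,\ldots,k\}$, let $\partial_i$ denote the unique derivation on $K_0 = \Q(z_1,\ldots,z_k)$ with $\partial_i(z_j) = \delta_{ij}$; by separability of $\overline{K_0}/K_0$, this derivation extends uniquely to a derivation, still denoted $\partial_i$, on $\overline{K_0}$. Note that $\partial_i$ vanishes identically on $\Q(T_i)$. Now let $\alpha \in \bigcap_{i=1}^k k_i$. For each $i$, $\alpha$ is algebraic over $\Q(T_i)$; let $Q_i(X) \in \Q(T_i)[X]$ be its minimal polynomial over $\Q(T_i)$. Applying $\partial_i$ to $Q_i(\alpha)=0$ and using that $\partial_i$ kills the coefficients of $Q_i$ yields $Q_i'(\alpha)\,\partial_i(\alpha) = 0$. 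Because char is $0$, the minimal polynomial is separable, so $Q_i'(\alpha) \neq 0$ and hence $\partial_i(\alpha) = 0$.

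Next, let $P(X) = X^n + c_{n-1}X^{n-1} + \cdots + c_0 \in K_0[X]$ be the minimal polynomial of $\alpha$ over $K_0$. Applying $\partial_i$ to $P(\alpha)=0$ and using $\partial_i(\alpha)=0$ gives
\[
\sum_{j=0}^{n-1} \partial_i(c_j)\,\alpha^j \;=\; 0,
\]
which is a polynomial relation in $\alpha$ over $K_0$ of degree $< n$. By minimality of $P$, we conclude $\partial_i(c_j) = 0$ for all $j$ and all $i$.

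Finally, an element $c \in K_0 = \Q(z_1,\ldots,z_k)$ killed by every $\partial_i$ must lie in $\Q$: writing $c$ as a reduced quotient of polynomials in $z_1,\ldots,z_k$, the vanishing of every partial derivative forces the numerator and denominator to be constants. Thus every $c_j \in \Q$, so $P(X) \in \Q[X]$ and $\alpha \in \overline{\Q}$, as required. The only place where anything could go wrong is the unique extension of $\partial_i$ to $\overline{K_0}$ and the minimal polynomial argument, but both are guaranteed by separability in characteristic $0$, so there is no serious obstacle.
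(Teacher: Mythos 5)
Your argument is correct, and it is worth noting that the paper does not actually supply a proof of Lemma \ref{lIntersection} at all: it simply refers to Brindza's original article. What you give is a self-contained proof by the standard derivation argument: extend each partial derivative $\partial_i$ uniquely from $K_0$ to $\overline{K_0}$ (possible since characteristic $0$ makes $\overline{K_0}/K_0$ separable), note that $\partial_i$ kills $\Q(T_i)$, deduce from separability of the minimal polynomial over $\Q(T_i)$ that $\partial_i(\alpha)=0$ for every $\alpha$ in the intersection, then push this through the minimal polynomial over $K_0$ to conclude that its coefficients are annihilated by all $\partial_i$ and hence lie in $\Q$, so $\alpha\in\overline{\Q}$ (understood as the algebraic closure of $\Q$ inside $\overline{K_0}$, which also gives the trivial inclusion $\overline{\Q}\subseteq\bigcap_i k_i$). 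All the steps check out; the only place you are slightly terse is the final claim that an element of $\Q(z_1,\ldots,z_k)$ with all partial derivatives zero is constant, which deserves the one-line reduced-fraction argument ($g\mid\partial_i g$ by coprimality forces $\partial_i g=0$, and likewise for the numerator, whence both are constants in characteristic $0$), but this is a standard fact. Compared with the paper's bare citation, your route has the advantage of being explicit and elementary, at the cost of a few lines; it is a perfectly acceptable replacement for the reference.
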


\begin{proof}
See \citep{Brindza}.
\end{proof}

First assume that $x \in k_i$ for all $i = 1, \ldots, k$. In this case $x$ and $y$ belong to the algebraic number field $\overline{\Q} \cap K$. Our goal will be to apply Theorem \ref{tCatalana}. For this, we will use a so called specialization argument. If we knew of a way to effectively compute $\overline{\Q} \cap K$, this would simplify our argument below. 

Recall that $K = K_0(u)$, $u \in A$, $u$ is integral over $A_0$, and $u$ has minimal polynomial
\[
F(X) = X^D + F_1X^{D - 1} + \cdots + F_D
\]
over $K_0$ with $F_i \in A_0$, $\deg F_i \leq (2d)^{\exp O(r)}$ and $h(F_i) \leq (2d)^{\exp O(r)} (h + 1)$. In the case $D = 1$, we take $u = 1$, $F(X) = X - 1$.

Let $\mathbf{y} = (y_1, \ldots, y_k) \in \Z^k$. We put
\[
|\mathbf{y}| := \max(|y_1|, \ldots, |y_k|).
\]
The substitution $z_1 \mapsto y_1, \ldots, z_k \mapsto y_k$ defines a ring homomorphism (specialization)
\[
\varphi_{\mathbf{y}}: \alpha \mapsto \alpha(\mathbf{y}): \{g_1/g_2 : g_1, g_2 \in A_0, g_2(\mathbf{y}) \neq 0\} \rightarrow \Q.
\]
Let us extend this to a ring homomorphism from $B$ to $\overline{\Q}$ for which we need to impose some restrictions on $\mathbf{y}$. Denote by $\Delta_F$ the discriminant of $F$, and let
\[
H := \Delta_F F_D f.
\]
It follows that $H \in A_0$. Using that $\Delta_F$ is a polynomial of degree $2D - 2$ with integer coefficients in $F_1, \ldots, F_D$, it follows easily that
\[
\deg H \leq (2d)^{\exp O(r)}.
\]
Let $N$ be an integer with $N \geq (2d)^{\exp O(r)}$. Lemma 5.4 in \citep{EG} implies that if $N \geq \deg H$ then
\[
T := \{\mathbf{y} \in \Z^k : |\mathbf{y}| \leq N, H(y) \neq 0\}
\]
is non-empty. Take $\mathbf{y} \in T$ and consider the polynomial
\[
F_\mathbf{y} := X^D + F_1(\mathbf{y})X^{D - 1} + \cdots + F_D(\mathbf{y}),
\]
which has $D$ distinct zeros which are all different from $0$, say $u_1(\mathbf{y}), \ldots, u_D(\mathbf{y})$. Thus, for $j = 1, \ldots, D$ the assignment
\[
z_1 \mapsto y_1, \ldots, z_k \mapsto y_k, u \mapsto u_j(\mathbf{y})
\]
defines a ring homomorphism $\varphi_{\mathbf{y}, j}$ from $B$ to $\overline{\Q}$. It is obvious that $\varphi_{\mathbf{y}, j}$ is the identity on $B \cap \Q$. Thus, if $\alpha \in B \cap \overline{\Q}$, then $\varphi_{\mathbf{y}, j}(\alpha)$ has the same minimal polynomial as $\alpha$ and so it is conjugate to $\alpha$.

Define the algebraic number fields $K_{\mathbf{y}, j} := \Q(u_j(\mathbf{y}))$ ($j = 1, \ldots, D$). Denote by $\Delta_L$ the discriminant of an algebraic number field $L$. Then for $j = 1, \ldots, D$ we have by Lemma 5.5 in \citep{EG} that $[K_{\mathbf{y}, j} : \Q] \leq D$ and
\begin{align*}
|\Delta_{K_{\mathbf{y}, j}}| \leq D^{2D - 1} \left(d_0^k \cdot e^{h_0} \cdot \max(1, |\mathbf{y}|)^{d_0}\right)^{2D - 2},
\end{align*}
where
\[
d_0 \geq \max(\deg F_1, \ldots, \deg F_D), \quad h_0 \geq \max(h(F_1), \ldots, h(F_D)).
\]
So we can take $d_0 = (2d)^{\exp O(r)}$ and $h_0 = (2d)^{\exp O(r)}(h + 1)$ giving
\begin{align*}
|\Delta_{K_{\mathbf{y}, j}}| &\leq D^{2D - 1} \left((2d)^{k \exp O(r)} \cdot \exp\left((2d)^{\exp O(r)} (h + 1)\right) \cdot (2d)^{(2d)^{\exp O(r)}}\right)^{2D - 2} \\
&\leq \exp\left((2d)^{\exp O(r)} (h + 1)\right).
\end{align*}

Now pick any $j = 1, \ldots, D$. Let $S$ contain all infinite valuations and all prime ideal divisors of $f(\mathbf{y})$. Then $\varphi_{\mathbf{y}, j}$ maps $B$ to the ring of $S$-integers of $K_{\mathbf{y}, j}$. In order to apply Theorem \ref{tCatalana}, we still need to bound $s$, $P$ and $Q$. 

It is easy to verify that for any $g \in A_0$, $\mathbf{y} \in \Z^k$,
\[
\log |g(\mathbf{y})| \leq k \log \deg g + h(g) + \deg g \log \max(1, |\mathbf{y}|).
\]
Applying this with $f$ and $\mathbf{y}$ we get
\[
|f(\mathbf{y})| \leq (2d)^{k \exp O(r)} \cdot \exp\left((2d)^{\exp O(r)} (h + 1)\right) \cdot (2d)^{(2d)^{\exp O(r)}} \leq \exp\left((2d)^{\exp O(r)} (h + 1)\right).
\]
Hence
\[
s \leq (2d)^{\exp O(r)} (h + 1)
\]
and
\[
P \leq \exp\left((2d)^{\exp O(r)} (h + 1)\right).
\]
We conclude that
\[
Q \leq |f(\mathbf{y})|^D \leq \exp\left((2d)^{\exp O(r)} (h + 1)\right)
\]
and we can apply Theorem \ref{tCatalana} to get (\ref{resulta}).

We still need to deal with the case $x \not \in k_i$ for some $i$. So pick an $i$ such that $x \not \in k_i$, then also $y \not \in k_i$. Let $S$ denote the subset of valuations $v$ of $M_i/k_i$ such that $v(z_i) < 0$, $v(f) > 0$, $v(x) > 0$ or $v(y) > 0$. Now let $v$ be any valuation such that $v \not \in S$. We claim that
\[
v(x) = v(y) = v(1) = 0.
\]
Because $v \not \in S$, it follows that $v(z_i) \geq 0$. Recall that $u$ is integral over $k[z_i]$. Together this implies that $v(u) \geq 0$. We also have that $v(f) \leq 0$, hence $v(f^{-1}) \geq 0$. But $x, y \in B$, so we get $v(x), v(y) \geq 0$. But then
\[
v(x) = v(y) = v(1) = 0
\]
as claimed.

Define $\Delta_i = [M_i : k_i(z_i)]$. Each valuation of $k_i(z_i)$ can be extended to at most $\Delta_i$ valuations of $M_i$. Hence $M_i$ has at most $\Delta_i$ valuations with $v(z_i) < 0$ and at most $\Delta_i \deg_{z_i} f$ valuations with $v(f) > 0$. So
\[
|S| \leq \Delta_i + \Delta_i \deg_{z_i} f + H_{M_i/k_i}(x) + H_{M_i/k_i}(y) \leq \Delta_i (1 + \deg f) + H_{M_i/k_i}(x) + H_{M_i/k_i}(y)
\]
Now we consider
\[
x^p - y^q = 1
\]
as an $S$-unit equation in $M_i$. Because $x^p \not \in k_i$ and $y^q \not \in k_i$, we can apply Theorem \ref{tMason} resulting in
\[
H_{M_i/k_i}(x^p) \leq |S| + 2g_{M_i/k_i} - 2 \leq \Delta_i (1 + \deg f) + H_{M_i/k_i}(x) + H_{M_i/k_i}(y) + 2g_{M_i/k_i} - 2
\]
and
\[
H_{M_i/k_i}(y^q) \leq |S| + 2g_{M_i/k_i} - 2 \leq \Delta_i (1 + \deg f) + H_{M_i/k_i}(x) + H_{M_i/k_i}(y) + 2g_{M_i/k_i} - 2.
\]
Define $K_i = k_i(z_i, u)$. Then we have that $[K_i : k_i(z_i)] \leq D$. Hence
\[
H_{M_i/k_i}(x) = [M_i : K_i] H_{K_i/k_i}(x) \geq [M_i:K_i] = \Delta_i/[K_i : k_i(z_i)] \geq \Delta_i/D
\]
and similarly for $y$. This gives
\[
\frac{\Delta_i}{D}(p - 2 + q - 2) \leq (p - 2)H_{M_i/k_i}(x) + (q - 2)H_{M_i/k_i}(y) \leq 2\Delta_i (1 + \deg f) + 4g_{M_i/k_i} - 4,
\]
hence
\[
p + q - 4 \leq \frac{D}{\Delta_i}(2\Delta_i (1 + \deg f) + 4g_{M_i/k_i} - 4) \leq 2D(1 + \deg f) + \frac{D}{\Delta_i}4g_{M_i/k_i}.
\]
Recall that $\Delta_i = [M_i : k_i(z_i)]$ and that $M_i$ is the splitting field of $F$ over $k(z_i)$. So Lemma \ref{lSchmidt} gives
\[
g_{M_i/k_i} \leq (\Delta_i - 1) D \max_j \deg_{z_i}(F_j) \leq \Delta_i \cdot D \cdot (2d)^{\exp O(r)}.
\]
Combining gives
\[
p + q - 4 \leq 2d^t(1 + (2d)^{\exp O(r)}) + 4(d^t)^2 (2d)^{\exp O(r)} \leq (2d)^{\exp O(r)}
\]
and hence (\ref{resultt}).
\end{proof}

\section{Acknowledgements}
This research was done as part of the author's PhD at Leiden University. The results are based on the author's Master Thesis, which was written under the supervision of J.-H. Evertse. I thank him for his many valuable ideas.

\[
\mathbf{\bibname}
\]

\bibliography{MT}
\end{document}